\documentclass[11pt]{amsart}

\usepackage{amsmath,amssymb,amsthm,url}
\usepackage{hyperref}
\usepackage{cleveref}

\newtheorem{theorem}{Theorem}[section]
\newtheorem{lemma}[theorem]{Lemma}
\newtheorem{proposition}[theorem]{Proposition}
\newtheorem{corollary}[theorem]{Corollary}

\theoremstyle{definition}
\newtheorem{definition}[theorem]{Definition}

\newtheorem{remark}[theorem]{Remark}

\numberwithin{equation}{section}

\textwidth=17cm
\textheight=23cm
\parindent=16pt
\oddsidemargin=-0.3cm
\evensidemargin=-0.3cm
\topmargin=-0.5cm

\newcommand{\A}{\mathrm{A}}
\newcommand{\C}{\mathrm{C}} 
\newcommand{\D}{\mathrm{D}}
\newcommand{\V}{\mathrm{V}}
\newcommand{\PX}{\mathrm{PX}}

\renewcommand{\L}{\mathcal L}
\renewcommand{\O}{\mathcal O}
\renewcommand{\wr}{\mathop{\mathrm{wr}}}
\newcommand{\ZZ}{\mathbb Z}

\newcommand{\AG}{\mathrm {AG}}
\newcommand{\AAG}{\mathrm {A^2G}}
\newcommand{\HC}{\mathrm {HC}}
\newcommand{\Pet}{F_{10}}
\newcommand{\Hea}{F_{14}}
\newcommand{\Tut}{F_{30}}

\newcommand{\Alt}{\mathop{\mathrm{Alt}}}
\newcommand{\Aut}{\mathop{\mathrm{Aut}}}
\newcommand{\Out}{\mathop{\mathrm{Out}}}
\newcommand{\Sym}{\mathop{\mathrm{Sym}}}
\newcommand{\Gcd}{\mathop{\mathrm{gcd}}}
\newcommand{\lcm}{\mathop{\mathrm{lcm}}}
\newcommand{\ord}{\mathop{\mathrm{ord}}}

\newcommand{\PSL}{\mathop{\mathrm{PSL}}}
\newcommand{\Sp}{\mathop{\mathrm{Sp}}}
\newcommand{\PGL}{\mathop{\mathrm{PGL}}}
\newcommand{\PSU}{\mathop{\mathrm{PSU}}}

\newcommand{\PSp}{\mathop{\mathrm{PSp}}}
\newcommand{\PGammaL}{\mathop{\mathrm{P}\Gamma\mathrm{L}}}
\newcommand{\POmega}{\mathop{\mathrm{P}\Omega}}

\newcommand{\vPX}{\overrightarrow{\PX}}
\newcommand{\vGa}{\overrightarrow{\Gamma}}

\def\Op#1#2{{\bf O}_{{#1}}({{#2}})}
\def\cent#1#2{{\bf C}_{{#1}}({{#2}})}
\def\Zent#1{{\bf Z}({{#1}})}
\def\norm#1#2{{\bf N}_{#1}(#2)}

\title[On the order of vertex-stabilisers]{On the order of vertex-stabilisers in vertex-transitive graphs with local group $\C_p\times\C_p$ or $\C_p \wr \C_2$}

\author{Pablo Spiga}
\address{Pablo Spiga, Dipartimento di Matematica Pura e Applicata, University of Milano-Bicocca, Via Cozzi 53, 20126 Milano, Italy}
\email{pablo.spiga@unimib.it}

\author{Gabriel Verret}
\address{Gabriel Verret, Centre for Mathematics of Symmetry and Computation, The University of Western Australia, 35 Stirling Highway, Crawley, WA 6009, Australia and FAMNIT, University of Primorska, Glagolja\v{s}ka 8, SI-6000 Koper, Slovenia}
\email{gabriel.verret@uwa.edu.au}
\thanks{The second author was supported by UWA as part of the Australian Research Council grant DE130101001.}

\begin{document}

\keywords{graph, edge-transitive, vertex-transitive, arc-transitive, half-arc-transitive, vertex-stabiliser}
\subjclass[2010]{05E18, 20B25} 

\begin{abstract}
Let $p$ be a prime and let $\L$ be either the intransitive permutation group $\C_p\times\C_p$ of degree $2p$ or the transitive permutation group $\C_p \wr\C_2$ of degree $2p$. Let $\Gamma$ be a connected $G$-vertex-transitive and $G$-edge-transitive graph and let $v$ be a vertex of $\Gamma$. We show that if the permutation group induced by the vertex-stabiliser $G_v$ on the neighbourhood $\Gamma(v)$ is isomorphic to $\L$ then either $|\V(\Gamma)|\geq p|G_v|\log_p\left(|G_v|/2\right)$, or $|\V(\Gamma)|$ is bounded by a constant depending only on $p$, or $\Gamma$ is a very-well understood graph. This generalises a few recent results. 

\end{abstract}
\maketitle

\section{Introduction}
Throughout this paper, all graphs considered will be finite and, unless otherwise specified, simple. A graph $\Gamma$ is said to be $G$-\emph{vertex-transitive} if $G$ is a subgroup of $\Aut(\Gamma)$ acting transitively on the vertex-set $\V(\Gamma)$ of $\Gamma$. Similarly, $\Gamma$ is said to be $G$-\emph{arc-transitive} or $G$-\emph{edge-transitive} if $G$ acts transitively on the arcs or edges of $\Gamma$, respectively. (An \emph{arc} is an ordered pair of adjacent vertices.)

A celebrated theorem of Tutte shows that, if $\Gamma$ is a connected $3$-valent $G$-arc-transitive graph, then the stabiliser $G_v$ of a vertex $v$ in $\Gamma$ has order at most $48$~\cite{Tutte,Tutte2}. This result was later generalised by Trofimov and Weiss who showed that, if $p$ is a prime then there exists a constant $c_p$ depending only on $p$ such that, if $\Gamma$ is a connected $p$-valent $G$-arc-transitive graph and $v$ is a vertex of $\Gamma$ then $|G_v|\leq c_p$~\cite{Trof,Trof2,Wep}. 

The situation is quite different when the valency is not a prime. Indeed, given a composite integer $k$, it is not hard to construct an infinite family of pairs $(\Gamma_i,G_i)$ such that $\Gamma_i$ is a connected $k$-valent $G_i$-arc-transitive graph and $|(G_i)_v|$ grows exponentially with $|\V(\Gamma_i)|$. (See~\cite[Theorem 7]{PSVfRestrictive} for example.)

Recently, Poto\v{c}nik and the authors proved that, when $k=4$, the pairs exhibiting such exponential growth are very special. More precisely, they have shown that there exists a sub-linear function $f$ such that if $\Gamma$ is a connected $4$-valent $G$-arc-transitive graph then either $|G_v|\leq f(|\V(\Gamma)|)$ or $\Gamma$ is part of a well-understood family of graphs~\cite{PSV4valent}.

Our goal is to extend this result to a more general setting. We first need a few definitions.

\begin{definition}
Let $\L$ be a permutation group, let $\Gamma$ be a $G$-vertex-transitive graph and let $G_v^{\Gamma(v)}$ denote the permutation group induced by the action of $G_v$ on the neighbourhood $\Gamma(v)$ of a vertex $v$. Then $(\Gamma,G)$ is said to be \emph{locally-$\L$} if $G_v^{\Gamma(v)}$ is permutation isomorphic to $\L$.
\end{definition}

Throughout this paper, $\C_n$ denotes a cyclic group of order $n$.

\begin{definition}
Let $p$ be a prime. Let $\L_{p,1}$ be the intransitive permutation group $\C_p\times\C_p$ of degree $2p$ and let $\L_{p,2}$ be the transitive permutation group $\C_p \wr\C_2$ of degree $2p$.
\end{definition}

The main result of our paper is the following.

\begin{theorem}\label{theorem:main2}
Let $p$ be a prime and let $\chi\in\{1,2\}$. There exists a function $c$ depending only on $p$ such that, if $(\Gamma,G)$ is a locally-$\L_{p,\chi}$ pair where $\Gamma$ is connected and $G$-edge-transitive and $v$ is a vertex of $\Gamma$, then  one of the following occurs:
\begin{enumerate}
\item $\Gamma\cong \PX(p,r,s)$ for some $r\geq 3$ and $1\leq s\leq r-2$; \label{labelmain1}
\item $|\V(\Gamma)|\geq 2p\frac{|G_v|}{\chi}\log_p\left(\frac{|G_v|}{\chi}\right)$; \label{mainbound}
\item $|\V(\Gamma)|\leq c(p)$. \label{small}
\end{enumerate}
\end{theorem}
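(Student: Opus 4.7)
The plan is to analyse the descending chain of pointwise stabilisers $G_v=G_v^{[0]}\geq G_v^{[1]}\geq G_v^{[2]}\geq\cdots$, where $G_v^{[i]}$ is the subgroup of $G_v$ fixing the ball of radius $i$ about $v$. Let $K\trianglelefteq G_v$ be the normal subgroup of index $\chi$ whose action on $\Gamma(v)$ is $\C_p\times\C_p$; its orbits $\Delta_0,\Delta_1$ partition $\Gamma(v)$ into two sets of size $p$ (swapped by $G_v/K$ when $\chi=2$). A Thompson--Wielandt-type argument, applied to an edge $\{v,w\}$ together with edge-transitivity, shows that the arc-kernel $G_{v,w}^{[1]}$ is a $p$-group, and hence so is $G_v^{[1]}$. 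If $G_v^{[1]}=1$ then $|G_v|=\chi p^2$ and we land in case~\eqref{small} with an explicit constant $c(p)$; the remainder of the argument therefore assumes $G_v^{[1]}\ne 1$.

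Next I would establish a growth estimate yielding case~\eqref{mainbound}. The standard embedding sends each quotient $G_v^{[i]}/G_v^{[i+1]}$ into the direct product $\prod_{w\in S_i(v)}G_w^{[1]}/G_w^{[2]}$ indexed by the $i$-th sphere $S_i(v)$ about $v$. Since $G_v^{[1]}/G_v^{[2]}$ is a $p$-group whose rank is bounded in terms of $p$ alone (by the rigid structure of $\L_{p,\chi}$), one obtains an upper bound of the form $|G_v^{[i]}/G_v^{[i+1]}|\le p^{a|S_i(v)|}$ for some constant $a=a(p)$. Summing logarithms along the chain, using that the descent terminates at some level $r$ bounded by the diameter of $\Gamma$, and combining this with the orbit-stabiliser relation $|\V(\Gamma)|=|G|/|G_v|$ and the valency factor $2p/\chi$ coming from the orbits of $G_v$ on $\Gamma(v)$, produces the inequality $|\V(\Gamma)|\geq 2p(|G_v|/\chi)\log_p(|G_v|/\chi)$.

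The growth estimate above can fail only when the chain $(G_v^{[i]})$ collapses unusually fast, which forces $G_v^{[i]}$ to act trivially on ``one side'' (one of the two $K$-orbits) of each intermediate sphere. I would show that this one-sided triviality, propagated outward, forces the existence of a nontrivial normal subgroup $N\trianglelefteq G$ contained in $G_v^{[1]}$ whose orbits partition $\V(\Gamma)$ into blocks of size $p$. Passing to the quotient $\Gamma/N$ gives a smaller locally-$\L_{p,\chi}$ pair; iterating this reduction and invoking the known characterisation of connected edge-transitive graphs of valency $2p$ admitting a normal block system with cyclic blocks of size $p$ identifies $\Gamma$ as a Praeger--Xu graph $\PX(p,r,s)$ with $r\geq 3$ and $1\le s\le r-2$, placing us in case~\eqref{labelmain1}.

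The hardest step will be the extraction of a \emph{global} $G$-normal block system from the failure of the growth estimate. This requires careful bookkeeping of how the partition $\{\Delta_0,\Delta_1\}$ of $\Gamma(v)$ propagates to consistent block systems on every sphere and an argument that the resulting partition of $\V(\Gamma)$ is $G$-invariant, not merely $G_v$-invariant. A separate difficulty arises when $\chi=1$: then $G$ is only half-arc-transitive, so one may need to pass to an arc-transitive overgroup $G^+$ with $[G^+:G]=2$ (when it exists) in order to apply the standard arc-transitive machinery, and then transport the conclusions back to $G$; otherwise the half-arc-transitive variant must be handled by hand using the additional structure provided by the two $G_v$-orbits on $\Gamma(v)$.
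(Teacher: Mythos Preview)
Your overall strategy is quite different from the paper's, and it contains a genuine gap that cannot be repaired without changing the approach substantially.

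The paper does not argue via the stabiliser chain $(G_v^{[i]})$. Instead it works with \emph{normal subgroups} of $G$: either every minimal normal subgroup is abelian (and one pushes the problem to a quotient $\Gamma/N$, using the Praeger--Xu characterisation and a careful analysis of regular covers of $\PX(p,r,s)$ to obtain cases~(\ref{labelmain1}) or~(\ref{mainbound})), or $G$ has a unique minimal normal subgroup which is non-abelian. In the latter case the paper invokes the Classification of Finite Simple Groups to bound $|G_v|$ (and hence $|\V(\Gamma)|$) by an explicit constant depending only on $p$. This CFSG step is essential: Table~\ref{mutherfucker} lists genuine examples with non-abelian socle (for instance $N=\PSL_3(p)$ or $N=G_2(3)$) for which neither~(\ref{labelmain1}) nor~(\ref{mainbound}) holds.

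Your proposed dichotomy ``either the growth estimate gives~(\ref{mainbound}) or the collapse forces a Praeger--Xu structure'' is therefore false. The monolithic non-abelian case lies entirely outside this dichotomy, and nothing in a local stabiliser-chain analysis will produce the uniform constant $c(p)$ for these examples without eventually appealing to CFSG. Your plan simply has no mechanism to handle them.

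There are also two technical problems. First, a nontrivial normal subgroup $N\trianglelefteq G$ contained in $G_v^{[1]}$ cannot exist: since $G$ is vertex-transitive and $N\leq G_v$, such $N$ would fix every vertex and hence be trivial. What one actually extracts (in the paper's argument) is an abelian normal subgroup of $G$ which is \emph{not semiregular}; this is what triggers the Praeger--Xu characterisation (Corollary~\ref{RealCorollary}). Second, the growth estimate you sketch does not yield the inequality in~(\ref{mainbound}): bounding $|G_v^{[i]}/G_v^{[i+1]}|$ by $p^{a|S_i(v)|}$ and summing gives an upper bound on $\log_p|G_v|$ in terms of $\sum_i|S_i(v)|\leq |\V(\Gamma)|$, which is much weaker than $|\V(\Gamma)|\geq 2p(|G_v|/\chi)\log_p(|G_v|/\chi)$. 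The sharp bound in the paper comes from a delicate analysis of the normal closure of $G_v^*$ inside specific $p$-group extensions (Theorems~\ref{Theorem:PreWreathCover} and~\ref{Theorem:WreathCover}), not from sphere counting.
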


The graphs $\PX(p,r,s)$ appearing in Theorem~\ref{theorem:main2} will be described in Section~\ref{sec:graphsCPRS}. We have formulated Theorem~\ref{theorem:main2} in this way for convenience while, in fact, we prove stronger results. Not only we do give an explicit upper bound on $c(p)$ but we prove certain structural results which should help to classify all \emph{exceptional pairs} $(\Gamma,G)$, that is, the pairs satisfying the hypothesis of Theorem~\ref{theorem:main2} but not Theorem~\ref{theorem:main2}~(\ref{labelmain1}) or~(\ref{mainbound}). See Section~\ref{sec:main} for details. It is quite likely that the pairs meeting the inequality in Theorem~\ref{theorem:main2}~(\ref{mainbound}) can be classified (see Remark~\ref{remark:equality}). 

Note that, if $\chi=1$ in Theorem~\ref{theorem:main2} then $G$ is transitive on vertices and edges of $\Gamma$ but not on its arcs. In other words, $\Gamma$ is \emph{$G$-half-arc-transitive.} This implies that $\Gamma$ admits an orientation $\vGa$ as a connected $G$-arc-transitive asymmetric digraph of out-valency $p$ with $G_v^{\vGa^+(v)}\cong\C_p$ (see Lemma~\ref{lemma:graphtodigraph}). 
Conversely, if $\vGa$ is a connected $G$-arc-transitive asymmetric digraph  of out-valency $p$ with $G_v^{\vGa^+(v)}\cong\C_p$ and $|G_v|>p$, and $\Gamma$ is the underlying graph of $\vGa$, then $(\Gamma,G)$ is locally-$\L_{p,1}$ and thus satisfies the hypothesis of Theorem~\ref{theorem:main2}. In particular, Theorem~\ref{theorem:main2} has the following immediate corollary:

\begin{corollary}\label{cor:main}
Let $p$ be a prime. There exists a function $c$ depending only on $p$ such that, if $\vGa$ is a connected $G$-arc-transitive asymmetric digraph of out-valency $p$ with $G_v^{\vGa^+(v)}\cong\C_p$ for some vertex $v$ of $\Gamma$, then one of the following occurs:
\begin{enumerate}
\item $\vGa\cong \vPX(p,r,s)$ for some $r\geq 3$ and $1\leq s\leq r-2$; 
\item $|\V(\vGa)|\geq 2p|G_v|\log_p|G_v|$; \label{mainbound2}
\item $|\V(\vGa)|\leq c(p)$. 
\end{enumerate}
\end{corollary}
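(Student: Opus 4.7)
The plan is to deduce the corollary directly from Theorem~\ref{theorem:main2}. Given $\vGa$ satisfying the hypothesis, let $\Gamma$ be its underlying undirected graph. Since $\vGa$ is asymmetric and $G$-arc-transitive, $\Gamma$ is connected, $G$-edge-transitive of valency $2p$, and the sets $\vGa^+(v)$ and $\vGa^-(v)$ are two $G_v$-orbits on $\Gamma(v)$, each of size $p$, on which $G_v$ acts as $\C_p$.

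I would first dispose of the case $|G_v|=p$. Here $2p|G_v|\log_p|G_v|=2p^2$, so either conclusion~(\ref{mainbound2}) of the corollary already holds, or $|\V(\vGa)|<2p^2$, which can be absorbed into conclusion~(3) by taking $c(p)\geq 2p^2$. For the main case $|G_v|>p$, the kernel $K$ of the $G_v$-action on $\vGa^+(v)$ is a nontrivial subgroup. A standard connectedness argument for arc-transitive digraphs shows that $K$ cannot also act trivially on $\vGa^-(v)$: if it did, $K$ would fix all of $\Gamma(v)$ pointwise and then, propagating along the arcs of $\vGa$, fix every vertex of $\vGa$, forcing $K=1$. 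Hence $G_v^{\Gamma(v)}$ surjects onto each $\C_p$-factor and has order $p^2$, so it equals $\C_p\times\C_p=\L_{p,1}$; in particular, $(\Gamma,G)$ satisfies the hypothesis of Theorem~\ref{theorem:main2} with $\chi=1$.

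Applying Theorem~\ref{theorem:main2}, each of its three conclusions translates directly into the corresponding conclusion of the corollary, since $|\V(\vGa)|=|\V(\Gamma)|$ and $2p|G_v|\log_p(|G_v|/1)=2p|G_v|\log_p|G_v|$. The main (and only real) obstacle is conclusion~(\ref{labelmain1}): if $\Gamma\cong\PX(p,r,s)$, one must verify that the orientation carried by $\vGa$ is isomorphic to $\vPX(p,r,s)$. This reduces to showing that, up to digraph isomorphism, $\PX(p,r,s)$ admits essentially a unique connected arc-transitive asymmetric orientation whose induced local action is cyclic, namely $\vPX(p,r,s)$, and should follow from the explicit construction of these graphs and digraphs given in Section~\ref{sec:graphsCPRS}.
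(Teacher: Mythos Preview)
Your approach matches the paper's: the paragraph immediately preceding Corollary~\ref{cor:main} derives it from Theorem~\ref{theorem:main2} with $\chi=1$ exactly as you outline, absorbing the case $|G_v|=p$ into conclusion~(3) and asserting the locally-$\L_{p,1}$ property for $|G_v|>p$ without spelling out the connectedness argument you sketch. The orientation issue you flag in case~(1) is a genuine subtlety that the paper also glosses over; it can be settled with the paper's own tools by observing that once $\Gamma\cong\PX(p,r,s)$ with $s\leq r-2$, Lemma~\ref{lemma:againC} produces a non-semiregular abelian normal subgroup of $G$, so Praeger's digraph result (Theorem~\ref{PraDigraph}) applies directly to $\vGa$ to give $\vGa\cong\vPX(p,r',s')$, with $s'\leq r'-2$ then forced by Corollary~\ref{RealCorollary}.
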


We note that Corollary~\ref{cor:main} is an improvement on~\cite[Theorem~1.1]{PotocnikVerret}. Similarly, Theorem~\ref{theorem:main2} with $\chi=2$ is an improvement on~\cite[Theorem A]{Verret2}. The bounds corresponding to Theorem~\ref{theorem:main2}~(\ref{mainbound}) and Corollary~\ref{cor:main}~(\ref{mainbound2}) obtained in those papers were much worse. On the other hand, the proofs did not depend on the Classification of Finite Simple Groups whereas ours does.

There are several reasons why one might like to bound $|G_v|$ in terms of $|\V(\Gamma)|$ in a locally-$\L$ pair $(\Gamma,G)$. For example, if $|G_v|$ can be bounded by a reasonably tame function of $|\V(\Gamma)|$ then the method described in \cite{ConDob} can be applied to obtain a complete list of all locally-$\L$ pairs up to a reasonable order.

The main result of~\cite{PSV4valent} is Theorem~\ref{theorem:main2} in the case $p=\chi=2$. (Note that $\L_{2,2}\cong\D_4$, the dihedral group of order $8$.) Moreover, the corresponding exceptional pairs were also classified in~\cite{PSV4valent} which allowed the construction of a census of all locally-$\D_4$ pairs $(\Gamma,G)$ with $|\V(\Gamma)|\leq 640$. Combined with a previous census of $2$-arc-transitive graphs by Poto\v{c}nik~\cite{Potocnik}, this yielded a census of all $4$-valent arc-transitive graphs of order at most $640$ (see~\cite{PotocnikHomepage}). This in turn also allowed us to obtain a census of all $3$-valent vertex-transitive graphs of order at most $1280$~\cite{PSV1280}. We expect Theorem~\ref{theorem:main2} to bear similar juicy fruits. 

Here is an immediate example. In Section~\ref{sec:(2,1)}, we classify the exceptional pairs corresponding to the case $p=2$, $\chi=1$ in Theorem~\ref{theorem:main2}. This is then used in~\cite{PSVHat} to obtain a census of all $4$-valent graphs of order at most 1000 admitting a half-arc-transitive group of automorphisms. (In fact, all connected arc-transitive asymmetric digraphs of order at most 1000 are obtained and the former is simply a corollary.)

\section{Main theorems and structure of the paper}\label{sec:main}

The hypothesis of Theorem~\ref{theorem:main2} will be used repeatedly. For brevity's sake, we will call it Hypothesis~A.

\medskip

\noindent\textbf{Hypothesis~A.} Let $p$ be a prime, let $\chi\in\{1,2\}$, let $(\Gamma,G)$ be a locally-$\L_{p,\chi}$ pair such that $\Gamma$ is connected and $G$-edge-transitive and let $v$ be a vertex of $\Gamma$.

\medskip

All of the results of this section are proved in Sections~\ref{sec:mainproofs} and~\ref{sec:semisimple}. Our main tool is the following result. (A permutation group is called \emph{semiregular} if all its point-stabilisers are trivial. Definitions and results concerning quotient graphs $\Gamma/N$ and regular covers can be found in Section~\ref{sec:quotient}.)

\begin{theorem}\label{theorem:main1}
Assume Hypothesis~A. Then one of the following occurs:
\begin{enumerate}
\item $\Gamma\cong \PX(p,r,s)$ for some $r\geq 3$ and $1\leq s\leq r-2$; \label{PXU}
\item $|\V(\Gamma)|\geq 2p\frac{|G_v|}{\chi}\log_p\left(\frac{|G_v|}{\chi}\right)$; \label{bound}
\item $G$ has a semiregular abelian minimal normal subgroup having at most two orbits; \label{twoorbits}
\item $G$ has a semiregular abelian minimal normal subgroup $N$ such that $\Gamma/N$ is a cycle of length at least $3$  and $(\ref{PXU})$ does not hold; \label{cycle}
\item $G$ has a unique minimal normal subgroup and this subgroup is non-abelian; \label{semisimple}
\item $G$ has a non-identity normal subgroup $N$ such that $\Gamma$ is a regular cover of $\Gamma/N$ and one of $(\ref{twoorbits})$, $(\ref{cycle})$, or $(\ref{semisimple})$ is satisfied with $(\Gamma,G)$ replaced by $(\Gamma/N,G/N)$. Moreover $N$ is soluble unless $(\ref{semisimple})$ is satisfied (with $(\Gamma,G)$ replaced by $(\Gamma/N,G/N)$).\label{funny}
\end{enumerate}
\end{theorem}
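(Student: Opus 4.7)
The plan is to analyse a minimal normal subgroup $M$ of $G$ and proceed by induction on $|G|$. The guiding observation is that, since $G\leq\Aut(\Gamma)$ acts faithfully on $\V(\Gamma)$, any abelian normal subgroup of $G$ is semiregular; equivalently, a non-semiregular minimal normal subgroup must be a direct product of isomorphic non-abelian simple groups. This dichotomy drives the case analysis.

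Consider first the case where $M$ is abelian (hence semiregular). If $M$ has at most two orbits on $\V(\Gamma)$, conclusion~(\ref{twoorbits}) holds; this covers in particular the transitive case. If $\Gamma/M$ is a cycle of length at least $3$, I would verify directly whether $\Gamma\cong\PX(p,r,s)$ for admissible parameters, giving~(\ref{PXU}) or~(\ref{cycle}). In the remaining sub-case, $\Gamma$ is a regular cover of $\Gamma/M$ and $(\Gamma/M,G/M)$ inherits Hypothesis~A: $M\cap G_v=1$ preserves the local action $G_v^{\Gamma(v)}$, while connectedness and edge-transitivity descend. I apply the theorem inductively and transfer back: if the quotient satisfies~(\ref{twoorbits}),~(\ref{cycle}) or~(\ref{semisimple}), we obtain~(\ref{funny}) with $N=M$ (and $M$ soluble); if it satisfies~(\ref{bound}), then $|\V(\Gamma)|\geq|\V(\Gamma/M)|$ together with $|G_v|=|(G/M)_v|$ yields~(\ref{bound}) for $\Gamma$; if it satisfies~(\ref{funny}) with witness $\bar N=N/M$, then $N\trianglelefteq G$ is semiregular on $\V(\Gamma)$ (both $M$ and $N/M$ being semiregular) and witnesses~(\ref{funny}) on $\Gamma$; finally, if the quotient satisfies~(\ref{PXU}), I would use the explicit structure of the Praeger--Xu graphs and their regular covers to argue that either $\Gamma$ is itself Praeger--Xu, giving~(\ref{PXU}), or the bound~(\ref{bound}) holds.

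When $M$ is non-abelian and semiregular, a completely analogous inductive reduction applies; here~(\ref{funny}) explicitly permits non-soluble $N$ precisely in the regime where the quotient satisfies~(\ref{semisimple}). When $M$ is non-abelian and not semiregular, we have $M_v\neq 1$, so $M_v^{\Gamma(v)}$ is a nontrivial normal subgroup of $G_v^{\Gamma(v)}=\L_{p,\chi}$. Using the very restricted normal-subgroup lattice of $\L_{p,\chi}$ together with a Thompson--Wielandt bound on $M_v$ and the Classification of Finite Simple Groups to constrain the composition factors of $M$ (as announced in the introduction), I would deduce either~(\ref{semisimple}) (if $M$ is the unique minimal normal subgroup of $G$) or, via a second minimal normal subgroup commuting with $M$, one of~(\ref{bound}),~(\ref{funny}).

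The principal obstacle is the Praeger--Xu transfer step in the abelian-quotient case: a locally-$\L_{p,\chi}$ regular cover of a Praeger--Xu graph is not automatically a Praeger--Xu graph, so ruling out the exceptional intermediate covers while still retaining the quantitative bound~(\ref{bound}) demands a close analysis of the automorphism groups and covers of $\PX(p,r,s)$. A secondary obstacle is the CFSG-based classification in the non-semiregular non-abelian case, where the locally-$\L_{p,\chi}$ constraint must be exploited to eliminate most possibilities for the socle of $G$ and to force either the semisimple conclusion~(\ref{semisimple}) or the quantitative bound~(\ref{bound}).
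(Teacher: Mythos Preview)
Your inductive scaffold is right, but two steps need repair.

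First, the ``guiding observation'' is false: faithfulness of $G$ on $\V(\Gamma)$ does \emph{not} force abelian normal subgroups to be semiregular. For instance, the base group $\C_p^r$ in $\C_p\wr\D_r\leq\Aut(\PX(p,r,1))$ is an abelian normal subgroup with nontrivial vertex-stabilisers. What is true---and this is Corollary~\ref{RealCorollary}, resting on the Praeger--Xu characterisation---is that a non-semiregular abelian normal subgroup forces conclusion~(\ref{PXU}). So in the abelian branch you must first assume~(\ref{PXU}) fails, and only then deduce semiregularity.

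Second, your non-abelian branch is simultaneously over- and under-powered. No CFSG or Thompson--Wielandt is needed for this purely structural theorem; conversely, your ``analogous inductive reduction'' when $M$ is non-abelian and semiregular may land you in~(\ref{funny}) with a non-soluble witness $N=M$ while the quotient satisfies~(\ref{twoorbits}) or~(\ref{cycle}), violating the solubility clause. The paper's approach (Lemma~\ref{lemma:reductionToUnique} and Corollary~\ref{corollary:nonabelian}) is cleaner: if $G$ has any non-abelian minimal normal subgroup $M$, then either $M$ is unique (giving~(\ref{semisimple})), or there is a second minimal normal subgroup $V$; one checks that $\Gamma$ is a regular cover of $\Gamma/V$ (since $M\cong MV/V$ cannot embed in the automorphism group of a cycle or of a graph on at most two vertices), and $MV/V$ is again a non-abelian minimal normal subgroup of $G/V$; now iterate. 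This always terminates in~(\ref{semisimple}) for some quotient, so the witness for~(\ref{funny}) in the non-abelian case automatically falls under the exception to the solubility clause. Once this case is disposed of, every minimal normal subgroup of $G$ is abelian, and the inductive witnesses for~(\ref{funny}) are built from abelian layers, hence soluble. (The paper also chooses the abelian $N$ so that either $N$ is a $p$-group or $\Op p G=1$; this is what makes Theorems~\ref{Theorem:PreWreathCover} and~\ref{Theorem:WreathCover} applicable in the Praeger--Xu transfer step you flagged.)
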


Theorem~\ref{theorem:main1}~(\ref{PXU}) and~(\ref{bound}) neatly correspond to Theorem~\ref{theorem:main2}~(\ref{labelmain1}) and~(\ref{mainbound}). To obtain Theorem~\ref{theorem:main2} from Theorem~\ref{theorem:main1}, it thus suffices to obtain a bound on $|\V(\Gamma)|$ or $|G_v|$ in each of the remaining cases of Theorem~\ref{theorem:main1}. This is exactly what we do in Theorems~\ref{thm:mainbasic},~\ref{GarPraCycle} and~\ref{theo:semisimple} for cases~(\ref{twoorbits}), (\ref{cycle}) and (\ref{semisimple}), respectively. These are in some sense the basic cases with respect to case~(\ref{funny}) and the result in this last case follows immediately from the theory of regular covers (see Section~\ref{sec:quotient}).

\begin{theorem}\label{thm:mainbasic}
Assume Hypothesis~A. If $G$ has a semiregular abelian normal subgroup having at most two orbits then $G_v$ acts faithfully on $\Gamma(v)$ and, in particular, $|G_v|=\chi p^2$.
\end{theorem}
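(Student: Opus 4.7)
The claim $|G_v|=\chi p^2$ is equivalent to the triviality of $K:=G_v^{[1]}$, the pointwise stabiliser in $G_v$ of $\Gamma(v)$, since $|G_v/K|=|\L_{p,\chi}|=\chi p^2$; so the real content is faithfulness of the action on $\Gamma(v)$. Writing $N$ for the given semiregular abelian normal subgroup, conjugation yields a map $G_v\to\Aut(N)$, and the plan is to argue in two stages: first that $K\leq\cent{G_v}{N}$, and then that any element of $K\cap\cent{G_v}{N}$ is trivial.

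For the first stage, I would split on whether $N$ has one or two orbits. If $N$ is regular, identify $\V(\Gamma)$ with $N$ via $n\leftrightarrow n(v)$, so that $\Gamma$ is a Cayley graph on $N$ with connection set $S=\{s\in N: s(v)\in\Gamma(v)\}$ and the $G_v$-action on $\V(\Gamma)$ corresponds to conjugation on $N$. For $k\in K$ and $s\in S$, the equality $k(s(v))=s(v)=(ksk^{-1})(v)$ together with semiregularity of $N$ forces $ksk^{-1}=s$; since $\Gamma$ is connected, $\langle S\rangle=N$, hence $k\in\cent{G_v}{N}$. If instead $N$ has orbits $B_1,B_2$, then vertex-transitivity of $G$ forces $G$ to swap them, after which edge-transitivity plus connectedness of $\Gamma$ forces $\Gamma$ to be bipartite with bipartition $\{B_1,B_2\}$ (an edge inside $B_1$ would, under the swap, give a second $G$-orbit of edges, while no between-edges would disconnect $\Gamma$). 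Fix any $w_0\in\Gamma(v)\subseteq B_2$, which is automatically fixed by every $k\in K$, and set $T:=\{n\in N:n(w_0)\in\Gamma(v)\}$. The same calculation, now exploiting $k(w_0)=w_0$, gives $knk^{-1}=n$ for every $n\in T$. Moreover, two-step walks $v\to n(w_0)\to(nt^{-1})(v)$ with $n,t\in T$ reach every vertex of $B_1$ by connectedness, whence $\langle TT^{-1}\rangle=N$ and again $k\in\cent{G_v}{N}$.

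For the second stage, let $k\in K\cap\cent{G_v}{N}$. For any $u\in B_1$, write $u=n(v)$ with $n\in N$; then $k(u)=kn(v)=nk(v)=n(v)=u$, so $k$ fixes $B_1$ pointwise. In the one-orbit case this yields $k=1$ at once. In the two-orbit case the analogous computation at the reference $w_0$ (which $k$ fixes, by the first stage) shows that $k$ also fixes $B_2$ pointwise, and again $k=1$. The main obstacle, as I see it, is the two-orbit case: one must first combine vertex-transitivity, edge-transitivity, and connectedness to pin down the bipartite structure relative to the $N$-orbits, and then make a careful choice of reference point inside $\Gamma(v)$ so that the Cayley-style argument from the regular case transfers, with $\langle TT^{-1}\rangle=N$ playing the role that $\langle S\rangle=N$ plays there.
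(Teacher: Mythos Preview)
Your proof is correct, but it takes a different route from the paper's. You reduce to showing $K\leq\cent{G}{N}$ via an explicit Cayley-type identification (checking centralisation on a generating set $S$ or $T$ arising from the connection set), and then observe that anything centralising $N$ and fixing a reference point in each $N$-orbit must fix that orbit pointwise. The paper instead argues through $N$-orbits on arcs (in the regular case) or on edges (in the two-orbit case): there are exactly $2p$ such orbits, each represented by an arc or edge at $v$, so $G_v^{[1]}$ fixes every such $N$-orbit setwise; since each of these $N$-orbits meets the star at any fixed vertex in exactly one arc/edge, one deduces $G_v^{[1]}\leq G_u^{[1]}$ for every neighbour $u$, and connectedness finishes. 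The paper's argument is shorter and handles the two cases uniformly by a single substitution (arcs $\leftrightarrow$ edges), while yours is more hands-on and makes the mechanism behind semiregularity very explicit; in particular, your argument makes it transparent that the abelian hypothesis on $N$ is never actually used (and indeed the paper's proof does not use it either). One small wording quibble: in your justification of bipartiteness, the point is not that the swap produces a second orbit, but rather that the property ``edge lies within a block versus between blocks'' is $G$-invariant, so edge-transitivity forces one type, and connectedness excludes the all-within case. You have the right conclusion, just phrased a bit loosely.
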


If $p$ and $q$ are distinct primes, we denote by $\ord_p(q)$ the smallest positive integer $\ell$ with $q^\ell\equiv 1\mod p$.

\begin{theorem}\label{GarPraCycle}
Assume Hypothesis~A and assume that $G$ has a semiregular abelian minimal normal $q$-subgroup $N$ such that $\Gamma/N$ is a cycle of length $m\geq 3$. Let $K$ be the kernel of the action of $G$ on the $N$-orbits. Then $|G_v|=\chi p^t$, $|G_v:K_v|=\chi$ and one of the following occurs:
\begin{enumerate}
\item $\Gamma\cong \PX(p,r,s)$ for some $r\geq 3$ and $1\leq s\leq r-2$; \label{first}
\item $t\leq m$, $p\neq q$, $K_v$ is an elementary abelian $p$-group acting faithfully by conjugation on $N$, and $|\V(\Gamma)|\geq mq^{t\ord_p(q)}$. \label{second}
\end{enumerate}
\end{theorem}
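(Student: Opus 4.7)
The plan is to first determine the structure of $G/K$, use this to establish $|G_v:K_v|=\chi$, and then apply Thompson--Wielandt to obtain $|G_v|=\chi p^t$. Since $G$ is vertex-transitive on $\Gamma$, the quotient $G/K$ acts vertex-transitively on the cycle $\Gamma/N$ of length $m$, so it is either cyclic of order $m$ or dihedral of order $2m$, with $|G_vK/K|\in\{1,2\}$. By the structure of $\L_{p,\chi}$, the vertex $v$ has $p$ neighbours in each of the two adjacent $N$-orbits $B_1,B_{-1}$; when $\chi=1$, the group $G_v$ is intransitive on $\Gamma(v)$ with orbits $\Gamma(v)\cap B_{\pm 1}$, so $G_v\leq K$ and $|G_vK/K|=1$, while when $\chi=2$, $G_v$ is transitive on $\Gamma(v)$ and therefore swaps $B_1$ and $B_{-1}$, giving $|G_vK/K|=2$. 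Thus $|G_v:K_v|=\chi$. The Thompson--Wielandt theorem, in its edge-transitive version, implies that the pointwise edge-stabiliser $G_{uv}^{[1]}$ is a $p$-group; combined with the fact that the stabiliser of a point in $\L_{p,\chi}$ has order $p$, this forces $G_v^{[1]}$ to be a $p$-group, and since $G_v/G_v^{[1]}\cong\L_{p,\chi}$ has order $\chi p^2$, we conclude $|G_v|=\chi p^t$ and $|K_v|=p^t$.

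The second step is to study $K_v^0:=\cent{K_v}{N}$. Since $K_v^0$ centralises $N$ and fixes $v\in B_0$, it fixes $B_0$ pointwise, and on each other $N$-orbit $B_j$ it sits inside $\cent{\Sym(B_j)}{N}$, a regular copy of the abelian group $N$. In particular $K_v^0|_{B_1}$ lies in the setwise stabiliser in $N$ of the $p$-subset $\Gamma(v)\cap B_1$, and any non-trivial element of $N$ stabilising such a subset must act on it semiregularly with orbits of size $p$, hence have order exactly $p$. If $p\neq q$, no non-trivial element of the $q$-group $N$ has order $p$, so $K_v^0|_{B_1}=1$, and similarly $K_v^0|_{B_{-1}}=1$. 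Using that $K_u^0$ is a $G$-conjugate of $K_v^0$ of the same order, the inclusions along neighbouring vertices of the cycle are equalities, and iterating gives $K_v^0=1$. In the contrary case $p=q$, this setwise-stabiliser obstruction fails; then $K=N\rtimes K_v$ is a normal $p$-subgroup of $G$ with cyclic or dihedral action on the $N$-orbits, and the combinatorial structure of $(\Gamma,G)$ matches the Praeger--Xu construction, so I would invoke a structural characterisation to conclude $\Gamma\cong\PX(p,r,s)$, giving case~(\ref{first}).

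Assume now $p\neq q$, so $K_v$ embeds in $\Aut(N)=\GL_d(q)$ where $|N|=q^d$. To obtain $t\leq m$, pick a walk $v=v_0,v_1,\ldots,v_m$ around the cycle with $v_j\in B_j$, and set $H_i:=K_v\cap G_{v_1}\cap\cdots\cap G_{v_i}$. Each quotient $H_{i-1}/H_i$ embeds into the permutation group induced by $H_{i-1}$ on $\Gamma(v_{i-1})\cap B_i$, a set of size $p$ on which $K_v$ induces at most $\C_p$ by the local $\L_{p,\chi}$ action; so $|H_{i-1}/H_i|\leq p$. A short argument using the faithful action of $K_v$ on $N$ shows $H_m=1$, and hence $|K_v|\leq p^m$, i.e., $t\leq m$. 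To see that $K_v$ is elementary abelian, I would analyse the commutator structure of $K_v^{[1]}$: its elements fix $\Gamma(v)$ pointwise, and their actions on each $N$-orbit are constrained by the setwise-stabiliser analysis above. A commutator calculation exploiting the coprimality $p\neq q$ and the particular structure of $\L_{p,\chi}$ then shows that $K_v$ has exponent $p$ and is abelian. Finally, the minimum dimension of a faithful $\mathbb F_q$-representation of an elementary abelian $p$-group of rank $t$ is $t\cdot\ord_p(q)$, so $d\geq t\cdot\ord_p(q)$, and $|\V(\Gamma)|=m|N|=mq^d\geq mq^{t\ord_p(q)}$, yielding case~(\ref{second}).

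The hardest step is identifying $\Gamma$ with a Praeger--Xu graph in the case $p=q$: one must match the combinatorial data encoded by $N$, $K_v$, and the cycle quotient with the explicit construction of $\PX(p,r,s)$, which typically requires a structural classification theorem. A secondary obstacle is the proof that $K_v$ is elementary abelian in the case $p\neq q$, which needs careful commutator algebra in $K_v^{[1]}$ exploiting the local action $\L_{p,\chi}$ and the coprimality $p\neq q$.
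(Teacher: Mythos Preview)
Your outline has the right shape but two genuine gaps remain, precisely where you flag difficulty.

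\textbf{The transition to case~(\ref{first}).} You split on $p=q$ versus $p\neq q$ and, for $p=q$, propose to ``invoke a structural characterisation''. But the relevant characterisation (Corollary~\ref{RealCorollary}, via Praeger--Xu) requires an \emph{abelian} normal subgroup of $G$ that is not semiregular; you exhibit only $K$, a normal $p$-group that need not be abelian. The paper avoids this split entirely. It sets $C=\cent K N$ and shows $C$ is abelian: since $N$ is abelian and regular on each $N$-orbit $\Delta$ while $C$ centralises $N$ and preserves $\Delta$, one has $C^\Delta=N^\Delta$, so $[C,C]$ acts trivially on every $\Delta$ and hence $[C,C]=1$. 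The dichotomy is then on whether $C$ is semiregular. If not, Corollary~\ref{RealCorollary} gives case~(\ref{first}). If so, $N\le C\le K=NK_v$ and $C_v=1$ force $C=N$, so $K_v$ acts faithfully on $N$; then $\Zent K\le\cent K N=N$, and $K_v\neq 1$ gives $\Zent K<N$, whence minimality of $N$ yields $\Zent K=1$ and thus $p\neq q$. So $p\neq q$ is a \emph{conclusion} in case~(\ref{second}), not an input to the case division.

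\textbf{Elementary abelian and $t\le m$.} Your proposed ``commutator calculation exploiting the coprimality $p\neq q$'' is not an argument, and your walk argument for $t\le m$ also stalls: fixing one vertex in each $N$-orbit does not force $H_m=1$ (the faithful action of $K_v$ on $N$ only gives $\cent{H_m}{N}=1$). The paper obtains both facts at once, before any case split, via Lemma~\ref{lemma:Malnic}: since $N$ is semiregular, $K_v\cong K/N$ embeds in the vertex-kernel of $\Aut(\Gamma')$ where $\Gamma'$ is the multi-quotient, a cycle of length $m$ with $p$ parallel edges between consecutive vertices; that kernel is $\Sym(p)^m$, whose Sylow $p$-subgroup is elementary abelian of order $p^m$.

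The remaining ingredients --- $|G_v:K_v|=\chi$, that $K_v$ is a $p$-group, and the bound $|N|\ge q^{t\,\ord_p(q)}$ --- match the paper's Lemmas~\ref{lemma:cyclecycle},~\ref{lemma:Gv*} and~\ref{PabloLemma}.
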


\begin{theorem}\label{theo:semisimple}
Assume Hypothesis~A. If $G$ has a unique minimal normal subgroup $N$ and $N$ is non-abelian then either $|\V(\Gamma)|> 2p\frac{|G_v|}{\chi}\log_p\left(\frac{|G_v|}{\chi}\right)$ or $(p,\chi,|\V(\Gamma)|,|G_v|,N,G)$ appears in Table~\ref{mutherfucker}. In particular, $p\leq 13$ and 
$$\log_p\left(\frac{|G_v|}{\chi}\right) \leq \begin{cases} 8 &\mbox{if } p=2, \\
6 & \mbox{if } p=3, \\
3 & \mbox{if } p\in \{5,7,13\}, \\
2 & \mbox{if } p=11. \\
\end{cases}$$
\end{theorem}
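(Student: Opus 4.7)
Since $N$ is the unique minimal normal subgroup of $G$ and is non-abelian, the plan is to first write $N \cong T^k$ for a non-abelian simple group $T$ and some $k \geq 1$, and to observe that $C_G(N) = 1$ (any non-trivial normal subgroup of $G$ contained in $C_G(N)$ would contradict the uniqueness of $N$). Consequently $N \leq G \leq \Aut(N) = \Aut(T) \wr \Sym(k)$. Next, by combining Hypothesis~A with a Thompson--Wielandt-style analysis of the kernel of the action of $G_v$ on $\Gamma(v)$ (parallel to the one underlying Theorems~\ref{thm:mainbasic} and~\ref{GarPraCycle}), one deduces that $|G_v| = \chi p^t$ for some $t \geq 1$; in particular $N_v := N \cap G_v$ is a $\{p,\chi\}$-subgroup of $N$ of order dividing $\chi p^t$.

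Since $|\V(\Gamma)|$ is at least the length $|N : N_v|$ of the $N$-orbit through $v$, the target inequality $|\V(\Gamma)| > 2p(|G_v|/\chi)\log_p(|G_v|/\chi) = 2tp^{t+1}$ is implied by $|N| > 2tp^{t+1}|N_v|$, and since $|N_v| \leq \chi p^t$, by $|N| > 2\chi tp^{2t+1}$. The proof thus reduces to the following enumeration problem: list all non-abelian simple groups $T$, integers $k \geq 1$, and subgroups $N = T^k \leq G \leq \Aut(N)$ with $|N| \leq 2\chi tp^{2t+1}$ such that $G$ contains a $\{p,\chi\}$-subgroup $G_v$ of order $\chi p^t$ whose induced action on $\Gamma(v)$ is permutation isomorphic to $\L_{p,\chi}$, and then identify which of these actually yield a connected $G$-edge-transitive locally-$\L_{p,\chi}$ pair.

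To bound $k$, I would exploit that $G$ permutes the $k$ simple factors transitively (since $N$ is a minimal normal subgroup) and that $|G_v|$ is a $\{p,\chi\}$-number: a non-trivial projection of $N_v$ to each factor would force $|N_v|$ to grow rapidly in $k$, and combined with $|N_v| \leq \chi p^t$ this bounds $k$ and reduces the analysis to the almost simple case $k = 1$. CFSG is then applied directly: for each non-abelian simple $T$ of order at most a polynomial in $|G_v|$, standard information on the $\{p,\chi\}$-local structure of $T$ identifies the candidate stabilisers, and one verifies case-by-case whether the locally-$\L_{p,\chi}$ edge-transitive condition can actually be realised. The handful of surviving configurations populate Table~\ref{mutherfucker}, and the bounds $p \leq 13$ together with the stated bounds on $\log_p(|G_v|/\chi)$ fall out of the enumeration, reflecting the fact that only for very small primes can a non-abelian simple group have a $\{p,\chi\}$-subgroup whose order is comparable to $\sqrt{|T|}$.

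The main obstacle is the CFSG case analysis itself, which requires traversing the alternating, classical, exceptional, and sporadic families for each small prime and verifying not merely the existence of a $\{p,\chi\}$-subgroup of the correct order but the precise structure of its action on a vertex neighbourhood -- a verification that, for the smallest groups, typically relies on a combination of local group-theoretic arguments and computer algebra. A secondary delicate point is ensuring the reduction from $k \geq 1$ to $k = 1$ does not discard any genuine exception that would need to appear in Table~\ref{mutherfucker}.
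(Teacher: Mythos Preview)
Your overall framework---embed $G$ into $W=\Aut(T)\wr\Sym(k)$, compare $|G|$ (or $|N|$) against a polynomial in $|G_v^*|$, and then run through the Classification---is the same as the paper's, and the conclusion $|G_v|=\chi p^t$ is correct (it is just Lemma~\ref{lemma:Gv*}). But there are two genuine gaps.

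First, the paper's main structural input is Theorem~\ref{lemma:ea}, which you do not use at all: $G_v^*$ has nilpotency class at most~$3$, exponent at most~$p^2$, centre of order at least $|G_v^*|^{1/3}$, and---crucially for the counting---contains an \emph{elementary abelian} subgroup of order at least $|G_v^*|^{2/3}$. The paper's first filter (Theorem~\ref{thrm:ss1}) uses the last fact to get $|G_v^*|\le p^{3r_p(W)/2}$ and then computes the $p$-rank $r_p(W)$ family-by-family; this is what produces the short Table~\ref{table:bt}. Your proposal instead bounds $p^t$ only by the order of a Sylow $p$-subgroup of $W$, which is the paper's \emph{second} filter (Theorem~\ref{secondStrategy}); on its own this gives a much longer list of candidates, and the final case analysis then relies heavily on the nilpotency, exponent and centre constraints to cut cases down (see the repeated ``nilpotency class at most~$3$ and exponent at most~$p^2$'' checks in the proof). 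Without Theorem~\ref{lemma:ea} you have neither the sharp first filter nor the structural constraints needed to finish the endgame.

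Second, your reduction to $k=1$ is not correct as stated. Transitivity of $G$ on the simple factors means the \emph{projections} of $N_v$ to each factor are conjugate, but this gives an \emph{upper} bound $|N_v|\le|\pi_1(N_v)|^k$, not a lower bound; it does not force $|N_v|$ to grow with $k$. And indeed $k=2$ examples appear in Table~\ref{mutherfucker} (e.g.\ $N=\PSL_2(9)^2$ and $N=\PSL_2(p)^2$), so a clean reduction to the almost simple case is impossible. The paper handles $k\ge 2$ uniformly via Lemma~\ref{neandertal1}, which computes $r_p(W)$ for the wreath product directly; combined with the bound $\ell|T|^\ell\le|G|$ (note the factor~$\ell$, which your inequality $|\V(\Gamma)|\ge|N:N_v|$ throws away), this is what keeps $k$ bounded in each family without first reducing to $k=1$.
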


\begin{table}[!h]
\begin{tabular}{|c|c|c|c|c|c|c|c|c|c|}\hline
$p$&$\chi$&$|\V(\Gamma)|$&$|G_v|$&$N$&$G$&Comments\\\hline
$2$&$1$&$30$&$2^2$&$\Alt(5)$&$\Sym(5)$&\\
$2$&$1$&$90$&$2^3$ or $2^4$&$\PSL_2(9)$&$|\PGammaL_2(9):G|\leq 2$&$G\neq \Sym(6)$\\
$2$&$1$&$8100$&$2^8$&$\PSL_2(9)^2$&$|\PGammaL_2(9)\wr\Sym(2):G|=2$&$G\neq \PGammaL_2(9)^2$\\
$3$&$1$&$11648$&$3^6$&$G_2(3)$&$\Aut(N)$&\\
$2,3,7$&$1$&$\frac{2(p^3-1)(p^2-1)}{\Gcd(3,p-1)}$&$p^3$&$\PSL_3(p)$&$N\rtimes\langle\iota\rangle$&$\iota$ graph aut.\\
$5,7$&$1$&$\frac{(p^2-1)^2}{2}$&$p^2$&$\PSL_2(p)^2$&$\PSL_2(p)\wr\Sym(2)$&\\

\hline

$2$&$2$&$15$&$2^3$&$\Alt(5)$&$\Sym(5)$&\\
$2$&$2$&$45$&$2^4$ or $2^5$&$\PSL_2(9)$&$|\PGammaL_2(9):G|\leq 2$& $G\neq \Sym(6)$\\
$2$&$2$&$90$&$2^4$&$\PSL_2(9)$&$\PGammaL_2(9)$&\\
$2$&$2$&$8100$&$2^9$&$\PSL_2(9)^2$&$\PGammaL_2(9)\wr\Sym(2)$&\\
$3$&$2$&$5824$&$2\cdot 3^6$&$G_2(3)$&$\Aut(N)$&\\
$7$&$2$&$1152$&$2\cdot 7^2$&$\PSL_2(7)^2$&$|\PGL_2(7)\wr\Sym(2):G|=2$&$G\neq \PGL_2(7)^2$\\
   &   &      &            &             &               & $G/N$ not cyclic\\
$2,3,5,7,13$&$2$&$\frac{(p^3-1)(p^2-1)}{\Gcd(3,p-1)}$&$2p^3$&$\PSL_3(p)$&$N\rtimes\langle\iota\rangle$&$\iota$ graph aut.\\
$5,7,11,13$&$2$&$\frac{(p^2-1)^2}{4}$&$2p^2$&$\PSL_2(p)^2$&$\PSL_2(p)\wr\Sym(2)$&\\
\hline

\end{tabular}
\caption{Exceptional pairs for Theorem~\ref{theo:semisimple}}\label{mutherfucker}
\end{table}
Together, Theorems~\ref{theorem:main1},~\ref{thm:mainbasic},~\ref{GarPraCycle} and~\ref{theo:semisimple} easily imply Theorem~\ref{theorem:main2}. In fact, they also yield an explicit bound on $c(p)$. 

\begin{proposition}\label{prop:explicit}
Let $e(p)=\frac{\log(2p)}{\log(1+1/p)}$. In Theorem~$\ref{theorem:main2}$, we can take $c(p) = \begin{cases} 8100 &\mbox{if } p=2, \\
2e(p)p^{e(p)+1} & \mbox{if } p\geq 3. \end{cases}$
\end{proposition}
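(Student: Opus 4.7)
The plan is to combine Theorems~\ref{theorem:main1}, \ref{thm:mainbasic}, \ref{GarPraCycle} and \ref{theo:semisimple} into an explicit bound. Assume Hypothesis~A and that neither Theorem~\ref{theorem:main2}~(\ref{labelmain1}) nor~(\ref{mainbound}) holds; write $a=\log_p(|G_v|/\chi)$, so that the negation of~(\ref{mainbound}) reads $|\V(\Gamma)|<2ap^{a+1}$. By Theorem~\ref{theorem:main1} we are in one of cases~(\ref{twoorbits}), (\ref{cycle}), (\ref{semisimple}) or~(\ref{funny}); in the last case we pass to the regular cover quotient $(\Gamma/N,G/N)$, which again satisfies Hypothesis~A with $|(G/N)_{\bar v}|=|G_v|$, so the same~$a$ is controlled by one of the first three cases applied to the quotient, and $|\V(\Gamma)|<2ap^{a+1}$ continues to control $|\V(\Gamma)|$ on the nose.

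In case~(\ref{twoorbits}), Theorem~\ref{thm:mainbasic} gives $a=2$, hence $|\V(\Gamma)|<4p^3\leq c(p)$. In case~(\ref{cycle}), since~(\ref{labelmain1}) fails, Theorem~\ref{GarPraCycle}(\ref{second}) furnishes $a=t$, $m\geq t$ and
\[|\V(\Gamma)|\geq mq^{t\ord_p(q)}\geq t(p+1)^t,\]
using $m\geq t$ and $q^{\ord_p(q)}\geq p+1$. Combined with $|\V(\Gamma)|<2tp^{t+1}$, this gives $(1+1/p)^t<2p=(1+1/p)^{e(p)}$ and thus $t<e(p)$; since $x\mapsto 2xp^{x+1}$ is strictly increasing on $x>0$, we get $|\V(\Gamma)|<2e(p)p^{e(p)+1}=c(p)$ for $p\geq 3$, while for $p=2$ we obtain $t\leq 3$ and $|\V(\Gamma)|<96$. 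In case~(\ref{semisimple}), Theorem~\ref{theo:semisimple} and Table~\ref{mutherfucker} give: for $p=2$ the maximum is exactly $8100=c(2)$, and for $p\geq 3$ each listed~$a$ satisfies $a\leq 6\leq e(p)$, whence $|\V(\Gamma)|\leq 2ap^{a+1}\leq 2e(p)p^{e(p)+1}=c(p)$.

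The bounds on~$a$ from the three direct cases transfer to the cover case~(\ref{funny}), giving $|\V(\Gamma)|<2ap^{a+1}\leq c(p)$, with a single borderline at $p=2$, quotient-case~(\ref{semisimple}), $a=8$, where $2ap^{a+1}=8192>c(2)=8100$. But the only table entry with $a=8$ has $|\V(\Gamma/N)|=8100$, so $|\V(\Gamma)|=|N|\cdot 8100<8192$ forces $|N|=1$, leaving $|\V(\Gamma)|=8100=c(2)$.

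The main obstacle is case~(\ref{cycle}): the refinement $m\geq t$ (rather than merely $m\geq 3$) is precisely what boosts the naive lower bound $3(p+1)^t$ to $t(p+1)^t$, so that it meets $2tp^{t+1}$ at the critical threshold $e(p)$ determined by $(1+1/p)^{e(p)}=2p$. Without this factor of~$t$, the resulting bound on~$c(p)$ would be qualitatively much larger.
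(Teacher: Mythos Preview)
Your proof is correct and, for $p\geq 3$, follows the paper's argument exactly: bound $t=a$ by $e(p)$ case-by-case via Theorems~\ref{thm:mainbasic}, \ref{GarPraCycle} and~\ref{theo:semisimple}, then use $|\V(\Gamma)|<2tp^{t+1}$ together with the monotonicity of $x\mapsto xp^x$ to conclude $|\V(\Gamma)|<c(p)$.

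The one divergence is at $p=2$. The paper simply invokes Corollary~\ref{cor : (2,1)} (for $\chi=1$) and~\cite{PSV4valent} (for $\chi=2$) and does not run the case analysis there at all, whereas you argue directly from Table~\ref{mutherfucker}: for $a\leq 7$ the crude bound $|\V(\Gamma)|<2ap^{a+1}\leq 3584$ already suffices, and for the borderline $a=8$ you note that any nontrivial regular cover of a graph on $8100$ vertices has at least $16200>8192$ vertices, so the only surviving possibility is $|\V(\Gamma)|=8100=c(2)$ itself. Your route is slightly more self-contained; the paper's is shorter because those references already package the $p=2$ analysis (and in fact the argument you give for $\chi=1$ is essentially the content of Theorem~\ref{theorem : (2,1)}).

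One cosmetic point: in the semisimple case for odd $p$ you need the strict inequality $6<e(p)$, not $6\leq e(p)$; this holds since $e(3)=\log 6/\log(4/3)\approx 6.23$, and the conclusion is unaffected.
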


The results of this section suggest a strategy for dealing with the exceptional pairs in Theorem~\ref{theorem:main2} for fixed $(p,\chi)$. Namely, first  find the exceptional pairs corresponding to each of Theorems~\ref{thm:mainbasic},~\ref{GarPraCycle} and~\ref{theo:semisimple} and then find all the regular covers that are still exceptional. For any fixed $p$, this is a ``finite problem'' but the difficulty increases with $p$, so it is not clear if this can be done in full generality. See Section~\ref{sec:(2,1)} where this procedure is implemented in the case $(p,\chi)=(2,1)$.

We now give a brief outline of the rest of the paper. Section~\ref{sec:quotient} contains some basic definitions and lemmas that are needed for the rest of the paper. In Section~\ref{locally-L} we study locally-$\L_{p,\chi}$ pairs and prove more preliminary results. The graphs $\PX(p,r,s)$ are defined in Section~\ref{sec:graphsCPRS} where we also establish some useful results concerning them. These are then used in Section~\ref{sec:nc} to deal with the crucial case of regular covers of $\PX(p,r,s)$. By Section~\ref{sec:mainproofs}, we are ready to prove Theorems~\ref{theorem:main1},~\ref{thm:mainbasic} and~\ref{GarPraCycle} and Proposition~\ref{prop:explicit}. The proof of Theorem~\ref{theo:semisimple} requires a detailed case-by-case analysis using the Classification of the Finite Simple Groups and is delayed until Section~\ref{sec:semisimple}.

\section{Preliminaries}\label{sec:quotient}

In this section, we collect various small technical results which will be used in later sections.

\subsection{Basic inequalities}

\begin{lemma}\label{lemma:stupidtechnical}
For $t$ and $p$ positive integers, we have $(p+1)^{t+1}> (p+t+1)p^t$. 
\end{lemma}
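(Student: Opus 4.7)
The plan is to prove this by a direct binomial expansion; induction on $t$ would also work but the expansion is cleaner and makes explicit how much slack the inequality has.

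First I would write
\[
(p+1)^{t+1} \;=\; \sum_{k=0}^{t+1}\binom{t+1}{k}p^{t+1-k} \;=\; p^{t+1}+(t+1)p^t+\binom{t+1}{2}p^{t-1}+\sum_{k=3}^{t+1}\binom{t+1}{k}p^{t+1-k},
\]
and observe that the right-hand side of the desired inequality is precisely
\[
(p+t+1)p^t \;=\; p^{t+1}+(t+1)p^t,
\]
which matches exactly the first two terms of the binomial expansion. Subtracting, we get
\[
(p+1)^{t+1}-(p+t+1)p^t \;=\; \binom{t+1}{2}p^{t-1}+\sum_{k=3}^{t+1}\binom{t+1}{k}p^{t+1-k}.
\]

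Since $t\geq 1$ and $p\geq 1$, every summand on the right is nonnegative, and the first term $\binom{t+1}{2}p^{t-1}$ is at least $1$, so the difference is strictly positive. This closes the argument.

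There is no real obstacle: the only mild subtlety is checking that the hypothesis that $t$ is a \emph{positive} integer is used, since for $t=0$ the two sides are equal ($(p+1)^1=p+1=(p+1)p^0$), which is why the statement excludes that case. As a sanity check for $t=1$ the difference evaluates to $1$, and for $t=2$ to $3p+1$, matching the formula above. If a shorter proof is preferred, one can alternatively induct on $t$ with base case $t=1$ (immediate), and inductive step using $(p+1)^{t+2}=(p+1)(p+1)^{t+1}>(p+1)(p+t+1)p^t$ and checking that the latter exceeds $(p+t+2)p^{t+1}$ by $(t+1)p^t>0$.
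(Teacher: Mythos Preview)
Your proof is correct. The paper takes a slightly different route: it applies Bernoulli's inequality $(1+1/p)^t\geq 1+t/p$ to get $(p+1)^t\geq (1+t/p)p^t$, then multiplies by $p+1$ to obtain $(p+1)^{t+1}\geq (p+t+1+t/p)p^t>(p+t+1)p^t$. Your binomial-expansion argument is essentially an unpacking of the same idea (Bernoulli's inequality for integer exponents is just the first two terms of the binomial expansion), but it is more self-contained and even yields a sharper lower bound on the slack, namely $\binom{t+1}{2}p^{t-1}$ versus the paper's $tp^{t-1}$. Either approach is perfectly adequate here; the paper's version is marginally shorter, while yours makes the source of the strict inequality more explicit.
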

\begin{proof}
By Bernoulli's inequality, we have $(1+1/p)^t\geq 1+t/p$. Therefore $(p+1)^t\geq (1+t/p)p^t$ and hence $(p+1)^{t+1}\geq (p+1)(1+t/p)p^t=(p+t+1+t/p)p^t>(p+t+1)p^t$.
\end{proof}


\subsection{Quotient graphs}\label{subsec:quotient}
Given a graph $\Gamma$ and a group $N\leq\Aut(\Gamma)$, the {\em quotient graph} $\Gamma/N$ is the graph whose vertices are the $N$-orbits, and with  two such $N$-orbits $v^N$ and $u^N$ adjacent whenever there is a pair of vertices $v'\in v^N$ and $u'\in u^N$ that are adjacent in $\Gamma$. If the natural projection $\pi:\Gamma\to \Gamma/N$ is a local bijection (that is, if $\pi_{|\Gamma(v)}:\Gamma(v)\to (\Gamma/N)(v^N)$  is a bijection for every $v\in\V(\Gamma)$) then $\Gamma$ is called a \emph{regular cover} of $\Gamma/N$.

We will need a few facts about quotient graphs which we collect in the following lemma (these are mostly folklore, see for example \cite[Section~1]{P85}).

\begin{lemma}\label{lemma:quotients}
Let $\Gamma$ be a connected $G$-vertex-transitive graph, let $v$ be a vertex of $\Gamma$, let $N$ be a normal subgroup of $G$ and let $K$ be the kernel of the action of $G$ on $N$-orbits. Then
\begin{enumerate}
\item $\Gamma/N$ is connected;\label{a1}
\item $G/K$ acts faithfully as a group of automorphisms of $\Gamma/N$;\label{a2}
\item $\Gamma/N$ is $G/K$-vertex-transitive;\label{a3}
\item the valency of  $\Gamma/N$ is at most the number of orbits of $K_v^{\Gamma(v)}$;\label{a5}
\item if $\Gamma$ is $G$-edge-transitive then $\Gamma/N$ is $G/K$-edge-transitive.\label{a4}

\smallskip
\hspace{-1.85cm} Moreover, if $\Gamma$ is a regular cover of $\Gamma/N$ then 
\smallskip

\item the valency of  $\Gamma/N$ is equal to the valency of $\Gamma$; \label{a6}
\item $N$ is semiregular;\label{a7}
\item $N=K$;\label{a8}
\item $G_v\cong (G/N)_{v^N}$;\label{a9}
\item $G_v^{\Gamma(v)}\cong (G/N)_{v^N}^{(\Gamma/N)(v^N)}$.\label{a10}
\end{enumerate}
\end{lemma}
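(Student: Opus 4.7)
The plan is to treat parts (1)--(5) as routine consequences of the definitions and devote the main work to the regular-cover statements (6)--(10). Since $N$ is normal, $G$ permutes the set of $N$-orbits, and $K$ is by definition the kernel of this action, so $G/K$ embeds into the automorphism group of the vertex-set of $\Gamma/N$; once I check that $G/K$ preserves adjacency in $\Gamma/N$, this gives (2). For (1), any path from $v$ to $u$ in $\Gamma$ projects under $\pi$ to a walk from $v^N$ to $u^N$, so connectivity transfers to $\Gamma/N$; vertex-transitivity (3) and edge-transitivity (5) transfer analogously because $\pi$ is $G$-equivariant. For (4), I would observe that the neighbours of $v^N$ in $\Gamma/N$ correspond to those $N$-orbits (other than $v^N$) meeting $\Gamma(v)$; these intersections form a $K_v$-invariant partition of a subset of $\Gamma(v)$, since $K$ preserves every $N$-orbit setwise and $K_v$ fixes $v$. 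Each such $N$-orbit therefore meets $\Gamma(v)$ in a union of $K_v^{\Gamma(v)}$-orbits, and the valency bound follows.

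Next I would assume $\pi|_{\Gamma(v)}$ is a bijection, which yields (6) immediately. For (7), I would take $n \in N_v$ and any $u \in \Gamma(v)$, observe that $u^n \in u^N \cap \Gamma(v)$, and use local bijectivity to conclude $|u^N \cap \Gamma(v)| = 1$ and hence $u^n = u$. Thus $N_v$ fixes $\Gamma(v)$ pointwise, so $N_v \leq N_u$ for every $u \in \Gamma(v)$; iterating through successive neighbourhoods and invoking connectivity of $\Gamma$ forces $N_v = 1$, so $N$ is semiregular.

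For (8), the inclusion $N \leq K$ is automatic. Conversely, given $k \in K$, I would write $v^k = v^n$ for some $n \in N$, so that $kn^{-1} \in K \cap G_v$; since $K$ preserves each $N$-orbit setwise, the argument of (7) applied to $kn^{-1}$ shows that it fixes $\Gamma(v)$ pointwise, hence is trivial by connectivity, giving $k \in N$. Statements (9) and (10) are then essentially bookkeeping: the setwise stabiliser of $v^N$ in $G$ is $NG_v$, so the second isomorphism theorem combined with $N \cap G_v = N_v = 1$ (from (7)) yields $(G/N)_{v^N} = NG_v/N \cong G_v$; and the $G_v$-equivariance of $\pi|_{\Gamma(v)}$ ensures this isomorphism intertwines the two permutation actions, producing (10).

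The main (and essentially only) obstacle is the semiregularity step (7): once local bijectivity is used to pin $N_v$ down on $\Gamma(v)$, the rest of the lemma either follows by transferring a property through the $G$-equivariant projection $\pi$, or is a direct consequence of (7) and (8) via the isomorphism theorems. The $K=N$ statement in (8) is then a mild elaboration of the same semiregularity trick, applied to a coset $kN$ rather than to an element of $N_v$ itself.
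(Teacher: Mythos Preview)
The paper does not actually prove this lemma: it is stated with the remark that the facts ``are mostly folklore'' and a reference to \cite[Section~1]{P85}, and no argument is given. Your proposal therefore supplies what the paper omits, and it is correct. The crucial step is exactly the one you identify, namely deducing $N_v=1$ from local bijectivity of $\pi$: once you know that each $N$-orbit meets $\Gamma(v)$ in at most one point, both (7) and (8) fall out by the connectivity argument you describe, and (9)--(10) are immediate from $N\cap G_v=1$ together with $G$-equivariance of $\pi$. One small point worth making explicit in (8) is that the iteration works because $kn^{-1}$ lies in $K_v$ (not merely $G_v$), so at each successive neighbour the element still preserves every $N$-orbit setwise and the local-bijectivity argument continues to apply; you have this implicitly, but it is the only place a reader might pause.
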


As an immediate corollary of Lemma~\ref{lemma:quotients}, we have:

\begin{corollary}\label{lemma:cover}
Assume Hypothesis~A. If $N$ is a normal subgroup of $G$ such that $\Gamma$ is a regular cover of $\Gamma/N$ then $(\Gamma/N,G/N)$ is a locally-$\L_{p,\chi}$ pair such that $\Gamma/N$ is connected and $G/N$-edge-transitive. In particular, Hypothesis~A holds with $(\Gamma,G)$ replaced by $(\Gamma/N,G/N)$.
\end{corollary}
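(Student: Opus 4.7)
The plan is to read the conclusion of Corollary~\ref{lemma:cover} off directly from Lemma~\ref{lemma:quotients}, once we set $K$ to be the kernel of the action of $G$ on $N$-orbits as in the hypotheses of that lemma. Under Hypothesis~A, $\Gamma$ is connected, $G$-vertex-transitive, and $G$-edge-transitive, with $G_v^{\Gamma(v)}$ permutation isomorphic to $\L_{p,\chi}$; moreover, $\Gamma$ is a regular cover of $\Gamma/N$ by assumption.

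First I would extract connectedness of $\Gamma/N$ from Lemma~\ref{lemma:quotients}~(\ref{a1}), and note by Lemma~\ref{lemma:quotients}~(\ref{a8}) that since $\Gamma$ is a regular cover of $\Gamma/N$ we have $N=K$. Then Lemma~\ref{lemma:quotients}~(\ref{a3}) gives that $\Gamma/N$ is $G/N$-vertex-transitive, and Lemma~\ref{lemma:quotients}~(\ref{a4}) gives that $\Gamma/N$ is $G/N$-edge-transitive (using $G$-edge-transitivity of $\Gamma$).

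Finally, I would invoke Lemma~\ref{lemma:quotients}~(\ref{a10}), which states that $G_v^{\Gamma(v)}$ is permutation isomorphic to $(G/N)_{v^N}^{(\Gamma/N)(v^N)}$. Combined with the hypothesis $G_v^{\Gamma(v)}\cong \L_{p,\chi}$, this yields that $(\Gamma/N,G/N)$ is locally-$\L_{p,\chi}$. Collecting these observations shows that $(\Gamma/N,G/N)$ satisfies all the requirements of Hypothesis~A, which is the ``in particular'' conclusion.

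There is no real obstacle here: the corollary is a direct repackaging of Lemma~\ref{lemma:quotients} in the language of Hypothesis~A, and the only point that requires the regular-cover hypothesis (as opposed to an arbitrary quotient) is the identification $N=K$ in part~(\ref{a8}) and the faithful local action in part~(\ref{a10}), both of which hold verbatim under our assumption.
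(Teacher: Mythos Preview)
Your proposal is correct and matches the paper's approach exactly: the paper presents this corollary as an immediate consequence of Lemma~\ref{lemma:quotients} with no separate proof, and your argument simply spells out which parts of that lemma yield which clauses of Hypothesis~A. The only item you leave implicit is Lemma~\ref{lemma:quotients}~(\ref{a2}) (faithfulness of $G/K=G/N$ on $\Gamma/N$), which is needed so that $G/N$ is genuinely a subgroup of $\Aut(\Gamma/N)$ in the definition of a locally-$\L$ pair, but this is a trivial addition.
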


\subsection{$p$-groups}\label{sub:pgrp}
Recall that  $\ord_p(q)$ denotes the smallest positive integer $\ell$ with $q^{\ell}\equiv 1\mod p$.

\begin{lemma}\label{PabloLemma} Let $p$ and $q$ be distinct primes and let $H$ be an elementary abelian $p$-subgroup of $\mathrm{GL}_n(q)$ of order $p^t$. Then $n\geq t\ord_p(q)$ and $q^n\geq (p+1)^t$.

\end{lemma}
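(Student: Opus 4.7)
The plan is to analyse the natural action of $H$ on $V=\mathbb{F}_q^n$ using ordinary (semisimple) representation theory, which applies because $\Gcd(|H|,q)=\Gcd(p^t,q)=1$.

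First, by Maschke's theorem, $V$ decomposes as a direct sum $V = V_0 \oplus V_1 \oplus \cdots \oplus V_k$ of irreducible $\mathbb{F}_q H$-modules, where I group the trivial summands into $V_0$ (possibly zero) and each $V_i$ with $i\geq 1$ is non-trivial. Since $H$ is abelian of exponent $p$, the absolutely irreducible representations of $H$ are the one-dimensional characters $\chi\colon H\to\overline{\mathbb{F}_q}^\times$, all taking values in the group $\mu_p$ of $p$-th roots of unity. As $\mu_p\subseteq \mathbb{F}_{q^\ell}$ precisely when $p\mid q^\ell-1$, the smallest field of definition for a non-trivial character is $\mathbb{F}_{q^d}$ with $d=\ord_p(q)$. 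The irreducible $\mathbb{F}_q H$-modules are the Galois orbits of characters under $\mathrm{Gal}(\overline{\mathbb{F}_q}/\mathbb{F}_q)$: the Frobenius acts on characters by $\chi\mapsto \chi^q$, and a non-trivial $\chi$ (of order exactly $p$) has Galois orbit of size $\ord_p(q)=d$. Hence every non-trivial irreducible $\mathbb{F}_q H$-module has dimension exactly $d$ over $\mathbb{F}_q$.

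Next I would carry out the faithfulness argument. Since raising a character of a group of order $p$ to a power coprime to $p$ does not change its kernel, the kernel of $H$ on a non-trivial irreducible summand $V_i$ corresponding to a Galois orbit $\{\chi_i,\chi_i^q,\dots,\chi_i^{q^{d-1}}\}$ equals $\ker\chi_i$, which is a hyperplane of $H$ (viewed as a vector space over $\mathbb{F}_p$ of dimension $t$). Because $H\leq\GL_n(q)$, the action on $V$ is faithful, so the intersection of these hyperplanes over $i=1,\dots,k$ must be trivial. An intersection of hyperplanes of an $\mathbb{F}_p$-space of dimension $t$ is trivial only if there are at least $t$ of them, so $k\geq t$.

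Combining these facts, $n = \dim V_0 + \sum_{i=1}^{k}\dim V_i \geq kd \geq td = t\ord_p(q)$, which is the first inequality. For the second, since $d=\ord_p(q)$ gives $p\mid q^d-1$ and $q^d>1$, we have $q^d\geq p+1$; therefore
\[
q^n \;\geq\; q^{td} \;=\; (q^d)^t \;\geq\; (p+1)^t,
\]
as required. The main (mild) obstacle is pinning down the dimension of a non-trivial irreducible $\mathbb{F}_qH$-module; once the Galois-orbit description is in hand, both bounds fall out immediately.
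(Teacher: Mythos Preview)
Your proof is correct and follows essentially the same approach as the paper: decompose $V$ via Maschke, observe that every non-trivial irreducible $\mathbb{F}_qH$-module has dimension $\ord_p(q)$ with kernel a hyperplane of $H$, and use faithfulness to force at least $t$ non-trivial summands. The only cosmetic difference is that the paper cites a textbook for the dimension and index-$p$ kernel facts, whereas you derive them via the Galois-orbit description of characters.
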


\begin{proof}
Let $V$ be the $n$-dimensional vector space of column vectors over the finite field $\mathbb{F}_q$. The action of $H$ on this vector space allows us to view $V$ as an $\mathbb{F}_qH$-module. Let $\cent V H$ be the centraliser of $H$ in $V$ and let $W=\langle -v+vh\mid v\in V,h\in H\rangle$. As $|H|$ is coprime to $q$, it follows from~\cite[$8.2.7$]{KS} that $V=\cent V H\oplus W$ and from Maschke's theorem that $W=W_1\oplus W_2\oplus \cdots \oplus W_\ell$, with $W_i$ an irreducible $\mathbb{F}_qH$-module for every $i\in \{1,\ldots,\ell\}$. Observe that $W_i$ is a non-trivial $\mathbb{F}_qH$-module because $W_i\cap \cent V H=0$.

For every $i\in \{1,\ldots,\ell\}$ let $C_i=\cent H{W_i}$ and observe that $\bigcap_{i=1}^\ell C_i=1$ because $H$ acts faithfully on $V$ and hence on $W$. Since $H$ is elementary abelian, it follows from~\cite[$9.4.3$]{DR} that $\dim_{\mathbb{F}_q}W_i=\ord_p(q)$ and $|H:C_i|=p$. In particular, $\ell\geq t$ and $n\geq\dim_{\mathbb{F}_q}W=\ell\ord_p(q)\geq t\ord_p(q)$, which concludes the first part of the proof. 

Since $q^{\ord_p(q)}\geq p+1$, we also have  $q^n\geq q^{t\ord_p(q)}\geq (p+1)^t$.
\end{proof}

\begin{lemma}\label{Pablolemma2}
Let $P=\langle x_0,\ldots,x_n\rangle$ be a $p$-group. If,  for some $i\in\{0,\ldots,n-1\}$ and $g\in P$, we have $x_n=x_i^g$, then $P=\langle x_0,\ldots,x_{n-1}\rangle$.   
\end{lemma}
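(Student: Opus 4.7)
The plan is to argue by contradiction. Set $Q = \langle x_0, \ldots, x_{n-1} \rangle$ and suppose $Q$ is a proper subgroup of $P$. I would then exploit the standard property of $p$-groups that every proper subgroup is contained in a maximal subgroup of index $p$, and that such maximal subgroups are normal. Specifically, pick a maximal subgroup $M$ of $P$ containing $Q$; then $M \trianglelefteq P$ and $|P:M| = p$.

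The key observation is then immediate: since $x_i \in Q \subseteq M$ and $M$ is normal in $P$, the conjugate $x_n = x_i^g$ also lies in $M$. Combined with $x_0, \ldots, x_{n-1} \in M$, this gives $\langle x_0, \ldots, x_n \rangle \subseteq M$, i.e.\ $P \subseteq M$, contradicting $|P:M| = p$. Hence $Q = P$, which is exactly the desired conclusion.

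There is really no obstacle here: the whole argument rests on the single structural fact that proper subgroups of $p$-groups lie in normal maximal subgroups, and then normality transports $x_i$ to $x_n$ for free. The lemma is essentially a reformulation of the non-generator property (the Frattini subgroup consists of non-generators), and this short normal-maximal-subgroup proof is the cleanest way to present it.
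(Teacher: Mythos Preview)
Your proof is correct and rests on the same underlying fact as the paper's: in a $p$-group the Frattini quotient governs generation. The paper phrases it via the commutator identity $x_n = x_i^g = x_i[x_i,g]$ together with the fact that commutators are non-generators, whereas you phrase it via normality of maximal subgroups; these are two standard and equivalent ways to invoke the same structural property, so the approaches are essentially the same.
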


\begin{proof}
We recall that $g$ is called a non-generator of $P$ if, for every subset $X$ of $P$, $P=\langle g,X\rangle$ implies that $P=\langle X\rangle$. In a $p$-group, every commutator is a non-generator (see~\cite[$5.3.2$]{DR}). Assume $x_n=x_i^g$ for some $i\in\{0,\ldots,n-1\}$ and $g\in P$. We have
\begin{eqnarray*}
P&=&\langle x_0,\ldots,x_{n-1},x_n\rangle=\langle
x_0,\ldots,x_{n-1},x_i^g\rangle=\langle x_0,\ldots,x_{n-1},x_i[x_i,g]\rangle\\ 
&=&\langle x_0,\ldots,x_{n-1},[x_i,g]\rangle=\langle x_0,\ldots,x_{n-1}\rangle.
\end{eqnarray*}
\end{proof}

As usual, we denote by $(\gamma_i(P))_i$ the lower central series of the group $P$.
\begin{lemma}\label{poddtechnical}
Let $P$ be a $p$-group generated by elements of order $p$ and such that $\gamma_2(P)$ is elementary abelian. Then $P$ has exponent at most $p^2$.
\end{lemma}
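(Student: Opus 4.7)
The plan is to exploit the structure of the abelianisation together with the assumption on $\gamma_2(P)$. Specifically, I would first pass to the quotient $P/\gamma_2(P)$, which is abelian by definition of $\gamma_2(P)$. Since $P$ is generated by elements of order $p$, the quotient $P/\gamma_2(P)$ is an abelian group generated by elements of order dividing $p$, and therefore it is elementary abelian (an abelian group generated by elements of order dividing $p$ has exponent dividing $p$).

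From this, for any $g \in P$, its image in $P/\gamma_2(P)$ satisfies $\bar g^{\,p} = 1$, which is to say $g^p \in \gamma_2(P)$. But $\gamma_2(P)$ is assumed elementary abelian, so $(g^p)^p = g^{p^2} = 1$. This shows $P$ has exponent at most $p^2$, as required.

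The argument is essentially a two-step collapse, and there is no real obstacle: the only thing worth checking carefully is the (standard) fact that an abelian group generated by elements of order dividing $p$ has exponent dividing $p$. No Hall--Petresco collection formula or finer commutator computation is needed, because the hypothesis that $\gamma_2(P)$ is elementary abelian already absorbs any ``error term'' produced when raising a product of order-$p$ elements to the $p$-th power.
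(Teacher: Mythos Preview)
Your proof is correct and is essentially the same as the paper's. The paper phrases the key step as the congruence $(xy)^p\equiv x^py^p\pmod{\gamma_2(P)}$ and then inducts over a generating set of order-$p$ elements to conclude $z^p\in\gamma_2(P)$ for all $z\in P$; this is exactly your observation that $P/\gamma_2(P)$ is abelian and generated by elements of order dividing $p$, hence elementary abelian.
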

\begin{proof}
Let $x$ and $y$ be two elements of $P$. Note that $(xy)^p\equiv x^py^p\mod \gamma_2(P)$.  Using induction and the fact that $P$ is generated by elements of order $p$, this shows that $z^p\in \gamma_2(P)$ for every $z\in P$. As $\gamma_2(P)$ is elementary abelian, $P$ has exponent at most $p^2$.
\end{proof}

\section{Locally-$\L_{p,\chi}$ pairs}\label{locally-L}
In this section, we prove a few facts about locally-$\L_{p,\chi}$ pairs. Recall that a \emph{block} of a permutation group $G$ on the set $\Omega$ is a subset $B$ of $\Omega$ such that for every $g\in G$, we have $B^g=B$ or $B^g\cap B=\emptyset$.  A block is called \emph{non-trivial} if $1< |B|<|\Omega|$ (and \emph{trivial} otherwise).

\begin{lemma}\label{lemma:blocks}
Let $p$ be a prime and let $\chi\in\{1,2\}$. 
\begin{enumerate}
\item $\L_{p,1}$ is the unique intransitive subgroup of index $\chi$ of $\L_{p,\chi}$. \label{unique}
\item The only non-trivial blocks of $\L_{p,\chi}$ are the orbits of $\L_{p,1}$. \label{blocks}
\end{enumerate}
\end{lemma}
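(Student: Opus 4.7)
The plan is to prove part (1) first and use it in part (2), handling the cases $\chi=1$ and $\chi=2$ separately throughout. Throughout, let $\Omega$ denote the $2p$-point set on which $\L_{p,\chi}$ acts.

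For part (1), the case $\chi=1$ is trivial. For $\chi=2$, an intransitive subgroup of index $2$ in $\L_{p,2}$ has order $p^2$ and hence is a Sylow $p$-subgroup of $\L_{p,2}$. If $p$ is odd, $\L_{p,1}$ is a normal subgroup of order $p^2$, hence the unique Sylow $p$-subgroup, so any such intransitive subgroup coincides with $\L_{p,1}$. If $p=2$, the group $\L_{2,2}\cong\D_4$ has exactly three subgroups of order $4$, and a direct inspection shows that only the base group $\L_{2,1}$ is intransitive.

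For part (2) with $\chi=1$, the two orbits $\O_1,\O_2$ of $\L_{p,1}$ are manifestly blocks. Given a non-trivial block $B$, I distinguish two cases. If $B$ lies inside a single orbit $\O_i$, then $B$ is a block for the regular action of $\C_p$ on the prime-sized set $\O_i$, forcing $B=\O_i$. Otherwise $B$ meets both orbits; picking a non-trivial $g\in\C_p$ and considering the element $(g,e)\in\C_p\times\C_p$, which fixes $\O_2$ pointwise and acts regularly on $\O_1$, I see that $(g,e)$ stabilises $B$ setwise (as it has a fixed point in $B\cap\O_2$), so ranging $g$ over $\C_p$ forces $\O_1\subseteq B$; symmetrically $\O_2\subseteq B$, whence $B=\Omega$, a contradiction.

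For part (2) with $\chi=2$, I invoke the standard correspondence between the blocks of $\L_{p,2}$ containing a fixed point $\alpha$ and the subgroups $H$ with $G_\alpha\leq H\leq\L_{p,2}$, where $|G_\alpha|=p$. Since $|\L_{p,2}|=2p^2$, the possible values of $|H|$ are $p,2p,p^2,2p^2$; the extreme orders yield the trivial blocks, the order $|H|=p^2$ forces $H=\L_{p,1}$ by part (1) (noting $G_\alpha\leq\L_{p,1}$) and gives the $\L_{p,1}$-orbit of $\alpha$ as the block, and it remains to rule out $|H|=2p$. This last case is only an issue for odd $p$ (since for $p=2$ it collapses to $|H|=p^2$); such an $H$ would require an involution of $\L_{p,2}$ normalising $G_\alpha$, but a quick computation shows that involutions outside $\L_{p,1}$ have the form $(a,a^{-1})\sigma$ and conjugate $G_\alpha=\{e\}\times\C_p$ onto $\C_p\times\{e\}\neq G_\alpha$, while $\L_{p,1}$ itself contains no involutions when $p$ is odd. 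The only technical hurdles I anticipate are this involution-normaliser computation and the direct $p=2$ inspection in part (1), where Sylow uniqueness fails.
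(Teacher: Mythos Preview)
Your proof is correct, and for part~(1) and part~(2) with $\chi=1$ it is essentially the paper's argument. For part~(2) with $\chi=2$, however, you take a genuinely different route. The paper never splits into cases on $\chi$: it simply observes that any block of $\L_{p,\chi}$ is automatically a block of the subgroup $\L_{p,1}$, so the analysis of $\L_{p,1}$-blocks (your $\chi=1$ case) already settles $\chi=2$ as well; it then only remains to remark that $\O_1$ and $\O_2$ really are $\L_{p,2}$-blocks because $\L_{p,1}\unlhd\L_{p,2}$. This reduction is shorter and avoids the involution-normaliser computation entirely. Your block--subgroup correspondence argument is perfectly valid and perhaps more transparent about \emph{why} no size-$2$ block exists, but it costs you the extra case analysis on $|H|$.

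One small point to tighten: when you write ``the order $|H|=p^2$ forces $H=\L_{p,1}$ by part~(1)'', part~(1) only asserts uniqueness among \emph{intransitive} index-$2$ subgroups, so you should first note that $H$ is intransitive. This is immediate, since $G_\alpha\leq H_\alpha$ gives $|H_\alpha|\geq p$ and hence $|\alpha^H|\leq p<2p$; but as written the parenthetical ``noting $G_\alpha\leq\L_{p,1}$'' does not quite supply this. (For odd $p$ your Sylow argument in part~(1) actually gives uniqueness among \emph{all} index-$2$ subgroups, so the issue only arises for $p=2$.)
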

\begin{proof}
We first show (\ref{unique}). If $\chi=1$, there is nothing to show hence we assume that $\chi=2$. If $p\neq 2$, then $\L_{p,1}$ is a normal Sylow $p$-subgroup of $\L_{p,2}$ and hence it is the unique subgroup of index $2$ in $\L_{p,2}$. If $p=2$, then $\L_{p,2}\cong\D_4$ and the statement is routine.

We now show (\ref{blocks}). Let $B$ be a block of $\L_{p,\chi}$ and let $\O_1$ and $\O_2$ be the orbits of $\L_{p,1}$. Note that, for $i\in\{1,2\}$, $\L_{p,1}$ acts primitively on $\O_i$ and hence $B\cap \O_i$ is either empty, a singleton, or $\O_i$. Suppose that $B\cap \O_1$ is a singleton $\{\omega\}$ while $B\cap\O_2\neq\emptyset$. There exists a non-identity $g\in \L_{p,1}$ fixing $\O_2$ pointwise but not fixing $\omega$. This shows that $B$ is not a block of $\L_{p,1}$, which is a contradiction. By reversing the roles played by $\O_1$ and $\O_2$, we have proved that $B$ must either be trivial, $\O_1$ or $\O_2$. As $\L_{p,1}\unlhd \L_{p,\chi}$, $\O_1$ and $\O_2$ are in fact blocks of $\L_{p,\chi}$.
\end{proof}

A \emph{digraph} $\vGa$ consists of a non-empty set of \emph{vertices} $\V(\vGa)$ and a set of \emph{arcs} $\A(\vGa)\subseteq \V(\vGa)\times \V(\vGa)$, which is an arbitrary binary relation on $V$. A graph is then simply a digraph $\vGa$ such that $\A(\vGa)$ is symmetric. A digraph $\vGa$ is called {\em asymmetric} provided that the relation $\A(\vGa)$ is asymmetric.  

An automorphism of a digraph $\vGa$ is a permutation of $\V(\vGa)$ which preserves the relation $\A(\vGa)$. We say that $\vGa$ is $G$-arc-transitive provided that $G$ is a group of automorphisms of $\vGa$ acting transitively on $\A(\vGa)$. 

If $(u,v)$ is an arc of $\vGa$ then we say that $v$ is an {\em out-neighbour} of $u$. The symbols $\vGa^+(v)$ will denote the set of out-neighbors of $v$. The digraph $\vGa$ is said to be of out-valency $k$ if $|\vGa^+(v)| = k$ for every $v\in \V(\vGa)$. Given a graph $\Gamma$ an \emph{asymmetric orientation} of $\Gamma$ is a digraph with the same vertex-set as $\Gamma$ and having exactly one arc out of every inverse pair of arcs of $\Gamma$.

\begin{lemma}\label{lemma:graphtodigraph}
Assume Hypothesis~A. If $\chi=1$ then $\Gamma$ has a $G$-arc-transitive asymmetric orientation $\vGa$ of out-valency $p$ with $G_v^{\vGa^+(v)}\cong\C_p$.
\end{lemma}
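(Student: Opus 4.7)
The plan is to exploit the intransitivity of $\L_{p,1}\cong\C_p\times\C_p$ on $\Gamma(v)$: it splits $\Gamma(v)$ into two orbits of size $p$, and this should force $G$ to be half-arc-transitive on $\Gamma$, the two arc-orbits being paired by arc-reversal. Choosing one of them as the arc-set yields the desired orientation.

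First I would establish that $G$ has exactly two orbits on arcs. Since $G$ is vertex-transitive, the orbits of $G$ on the arc set are in bijection with the orbits of $G_v$ on $\Gamma(v)$ (the bijection sends a $G$-orbit $O$ on arcs to the set of second coordinates of arcs in $O$ with first coordinate $v$). The group $\L_{p,1}$, viewed as the intransitive group $\C_p\times\C_p$ of degree $2p$, has exactly two orbits, both of size $p$, so $G$ has exactly two orbits $A_1$ and $A_2$ on arcs.

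Next I would argue that $A_1$ and $A_2$ are paired by arc reversal. For each edge $\{u,v\}$, the two arcs $(u,v)$ and $(v,u)$ lie in $A_1\cup A_2$. If both lay in the same $A_i$, then by $G$-edge-transitivity every edge would have both arcs in $A_i$, forcing $A_{3-i}$ to be empty and contradicting the existence of two arc-orbits. Hence every edge has one arc in $A_1$ and the other in $A_2$, which precisely says $A_2=\{(v,u):(u,v)\in A_1\}$.

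Finally I would define $\vGa=(\V(\Gamma),A_1)$. Then $\vGa$ is an asymmetric orientation of $\Gamma$ (by the pairing), it is $G$-arc-transitive (since $A_1$ is a single $G$-orbit), and its out-valency at $v$ equals $|\vGa^+(v)|$, which is the size of the $G_v$-orbit on $\Gamma(v)$ corresponding to $A_1$; this size is $p$. To identify $G_v^{\vGa^+(v)}$, note that it acts transitively on the $p$-element set $\vGa^+(v)$ and is a quotient of $G_v^{\Gamma(v)}\cong\C_p\times\C_p$, hence is an elementary abelian $p$-group that is transitive of prime degree $p$, so it must be $\cong\C_p$. I do not anticipate any real obstacle: the argument is a standard half-arc-transitive decomposition, and the only ingredient specific to our setting is the intransitivity of $\L_{p,1}$ which is supplied by Hypothesis~A with $\chi=1$.
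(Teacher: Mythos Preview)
Your argument is correct and follows essentially the same approach as the paper: both establish that $G$ is half-arc-transitive (the paper invokes this term directly, while you unpack it into the two-arc-orbit count plus the pairing-by-reversal), take one arc-orbit as the orientation, and read off the out-valency and local action from the structure of $\L_{p,1}$. Your treatment is simply a more explicit version of the same standard half-arc-transitive decomposition.
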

\begin{proof}
By hypothesis, $\Gamma$ is $G$-vertex-transitive and $G$-edge-transitive. Since $\L_{p,1}$ is not transitive, $\Gamma$ is not $G$-arc-transitive and is thus $G$-half-arc-transitive. It follows that $G$ has exactly two orbits on arcs of $\Gamma$ and every arc is in a different orbit than its inverse.  Let $a$ be an arc of $\Gamma$ and let $\vGa$ be the digraph with vertex-set $\V(\Gamma)$ and arc-set $\{a^g\mid g\in G\}$. Clearly, $\vGa$ is a $G$-arc-transitive asymmetric orientation of $\Gamma$ of out-valency $p$ with $G_v^{\vGa^+(v)}\cong\C_p$.
\end{proof}

\begin{lemma}\label{lemma:Gv*}
Assume Hypothesis~A. There exists a unique subgroup $G_v^*$ of index $\chi$ in $G_v$ such that $(G_v^*)^{\Gamma(v)}\cong\L_{p,1}$. Moreover, $G_v^*$ is a $p$-group.
\end{lemma}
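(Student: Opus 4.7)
My plan is to prove existence and uniqueness of $G_v^*$ directly from Lemma~\ref{lemma:blocks}(\ref{unique}), and then handle the $p$-group claim by reducing it to the assertion that the pointwise stabiliser $K$ of $\Gamma(v)$ in $G_v$ is a $p$-group and proving this via an iterated ball-stabiliser induction that exploits the cyclic nature of point stabilisers in $\L_{p,\chi}$.

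First I would tackle existence and uniqueness. Denote by $\phi\colon G_v\to G_v^{\Gamma(v)}\cong\L_{p,\chi}$ the natural action homomorphism and let $K=\ker\phi$. By Lemma~\ref{lemma:blocks}(\ref{unique}), $\L_{p,1}$ is the unique intransitive subgroup of index $\chi$ in $\L_{p,\chi}$, so $G_v^*:=\phi^{-1}(\L_{p,1})$ gives existence. For uniqueness, I would take any candidate $H\leq G_v$ of index $\chi$ with $H^{\Gamma(v)}$ permutation isomorphic to $\L_{p,1}$, compare orders to deduce $K\leq H$ (since $|H|=|G_v|/\chi=|K|p^2$ while $|H/(H\cap K)|=|H^{\Gamma(v)}|=p^2$), and then invoke Lemma~\ref{lemma:blocks}(\ref{unique}) once more to force $\phi(H)=\L_{p,1}$, whence $H=G_v^*$.

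For the moreover statement, I would first observe that $G_v^*/K\cong\L_{p,1}$ is a $p$-group, so it suffices to show $K$ is a $p$-group. Writing $G_v^{[i]}$ for the pointwise stabiliser in $G_v$ of the ball of radius $i$ around $v$ (so $K=G_v^{[1]}$), connectedness and finiteness of $\Gamma$ force $G_v^{[i]}=1$ once $i$ exceeds the diameter, and hence it is enough to show each quotient $G_v^{[i]}/G_v^{[i+1]}$ is a $p$-group for $i\geq 1$. Fix such an $i$ and a vertex $u$ at distance $i$ from $v$; the penultimate vertex of a shortest $v$--$u$ path lies in $\Gamma(u)$ and is fixed by $G_v^{[i]}$, so $(G_v^{[i]})^{\Gamma(u)}$ is contained in a point stabiliser of $G_u^{\Gamma(u)}\cong\L_{p,\chi}$. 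Since point stabilisers in both $\L_{p,1}=\C_p\times\C_p$ and $\L_{p,2}=\C_p\wr\C_2$ have order $p$, this is a $p$-group. The diagonal map $G_v^{[i]}/G_v^{[i+1]}\hookrightarrow\prod_{u}(G_v^{[i]})^{\Gamma(u)}$, taken over vertices $u$ at distance $i$ from $v$, is then injective and realises the quotient as a subgroup of a $p$-group.

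The only place that requires care is the injectivity of this diagonal map: any element in its kernel fixes every $\Gamma(u)$ pointwise for $u$ at distance $i$ from $v$, and since every vertex at distance $i+1$ from $v$ has a neighbour at distance $i$, such an element fixes the entire ball of radius $i+1$ and hence lies in $G_v^{[i+1]}$. No deeper Thompson--Wielandt-type input is needed here, since the cyclic local point stabiliser supplies the $p$-group control automatically.
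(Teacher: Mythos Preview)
Your proof is correct and follows essentially the same route as the paper. For existence and uniqueness the paper invokes Lemma~\ref{lemma:blocks}(\ref{unique}) in one line while you spell out the order count showing $K\leq H$; for the $p$-group claim the paper runs the equivalent argument on the arc-stabiliser $G_{uv}$ via a minimal-distance contradiction (take a $p'$-element and the closest vertex it moves), whereas you use the successive-quotient filtration $G_v^{[i]}/G_v^{[i+1]}$---both are standard packagings of the same connectedness argument driven by the fact that point-stabilisers in $\L_{p,\chi}$ have order $p$.
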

\begin{proof}
By Lemma~\ref{lemma:blocks} (\ref{unique}), $\L_{p,1}$  is the unique intransitive subgroup of index $\chi$ in $\L_{p,\chi}$. It follows that there is a unique subgroup $G_v^*$ of index $\chi$ in $G_v$ such that $(G_v^*)^{\Gamma(v)}\cong\L_{p,1}$.

Let $u$ be a neighbour of $v$ and let $G_v^{[1]}$ denote the kernel of the action of $G_v$ on $\Gamma(v)$. Since every point-stabiliser of $\L_{p,\chi}$ has order $p$, it follows that $|G_{uv}:G_v^{[1]}|=p$. A standard argument yields that $G_{uv}$ is a $p$-group: suppose that $G_{uv}$ contains an element $g$ of order coprime to $p$. Since $g$ is non-trivial, it must move some vertex. Let $w$ be a vertex of $\Gamma$ moved by $g$ at minimal distance from the arc $(u,v)$. By the connectivity of $\Gamma$ and the choice of $w$, there is a path $u_0,u_1,\ldots,u_t,w$ such that $\{u_0,u_1\}=\{u,v\}$ and $g$ fixes each $u_i$. Then $g\in G_{u_{t-1},u_t}$ and $g$ acts nontrivially on $\Gamma(u_t)$. This contradicts the fact that $G_{u_{t-1},u_t}^{\Gamma(u_t)}$ is a $p$-group. 

We have shown that $G_{uv}$ is a $p$-group. As $|G_v^*:G_{uv}|=p$, the result follows.
\end{proof}

From now on, when Hypothesis~A holds, we treat Lemma~\ref{lemma:Gv*} as the definition of $G_v^*$.

\begin{lemma}\label{lemma:easy}
Assume Hypothesis~A. If $N$ is a normal subgroup of $G$ then one of the following occurs:
\begin{enumerate}
\item $N$ has at most two orbits;
\item $\Gamma/N$ is a cycle of length at least $3$; \label{cyclecase}
\item $\Gamma$ is a regular cover of $\Gamma/N$. \label{reduction}
\end{enumerate}
\end{lemma}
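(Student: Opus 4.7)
The plan is to analyse $N$ via the natural projection $\pi\colon \V(\Gamma)\to \V(\Gamma/N)$, $\pi(u)=u^N$, restricted to $\Gamma(v)$. Let $K$ be the kernel of the action of $G$ on the set of $N$-orbits; then $N\le K$ and $K\trianglelefteq G$, so $K_v\trianglelefteq G_v$ and $K_v^{\Gamma(v)}\trianglelefteq G_v^{\Gamma(v)}=\L_{p,\chi}$. The fibers of $\pi|_{\Gamma(v)}$, namely the non-empty sets $u^N\cap \Gamma(v)$ for $u\in \Gamma(v)$, form a $G_v$-invariant partition of $\Gamma(v)$ (since $G_v$ fixes $v^N$ and permutes the remaining $N$-orbits); each fiber is therefore a block of $G_v^{\Gamma(v)}=\L_{p,\chi}$, and by Lemma~\ref{lemma:blocks}~(\ref{blocks}) has size $1$, $p$ or $2p$.

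I would first dispose of the ``loop'' subcase. If $v^N\cap \Gamma(v)\ne \emptyset$, then $\Gamma$ has an edge with both endpoints in a common $N$-orbit; since $N\trianglelefteq G$, $G$-edge-transitivity propagates this property to every edge, and the connectedness of $\Gamma$ forces $N$ to be vertex-transitive, which is case~(1). Assuming hereafter $v^N\cap \Gamma(v)=\emptyset$, I proceed by cases on the fiber partition, invoking Lemma~\ref{lemma:quotients}~(\ref{a5}). If the fibers are $\{\Gamma(v)\}$, then $\Gamma/N$ has valency at most $1$ and is connected and $G/K$-vertex-transitive, so a single vertex or $K_2$; $N$ has at most two orbits, case~(1). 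If the fibers are $\{\Omega_1,\Omega_2\}$ (the two $\L_{p,1}$-orbits on $\Gamma(v)$, necessarily lying in distinct $N$-orbits), then $\Gamma/N$ has valency exactly $2$ and is connected and vertex-transitive, hence a cycle of length at least $3$, case~(2). If the fibers are singletons, then $\pi|_{\Gamma(v)}$ is a bijection of $\Gamma(v)$ onto $(\Gamma/N)(v^N)$, and so $\Gamma$ is a regular cover of $\Gamma/N$, case~(3).

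The main obstacle is the remaining possibility, arising only when $\chi=1$: a ``mixed'' fiber partition $\{\Omega_i,\, p\ \text{singletons in}\ \Omega_j\}$ with $\{i,j\}=\{1,2\}$. I would rule this case out using the arc-transitive asymmetric orientation $\vGa$ produced by Lemma~\ref{lemma:graphtodigraph}. Orienting so that $\Omega_i=\vGa^+(v)$, the projection to the quotient digraph $\vGa/N$ (which is well defined because $N\trianglelefteq G$ preserves $\A(\vGa)$) would give $\vGa/N$ out-valency $1$ at $v^N$ (from the single fiber $\Omega_i$) and in-valency $p$ (from the $p$ singleton fibers in $\Omega_j$). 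This contradicts the equality of in- and out-valencies in the $G/K$-arc-transitive digraph $\vGa/N$ when $p\ge 2$, eliminating this case and completing the proof.
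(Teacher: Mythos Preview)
Your proof is correct and follows the same underlying idea as the paper: the fibres of $\pi|_{\Gamma(v)}$ are blocks of $G_v^{\Gamma(v)}\cong\L_{p,\chi}$, and Lemma~\ref{lemma:blocks}~(\ref{blocks}) then restricts their possible sizes. The paper's argument is extremely terse --- it simply lets $k$ be the valency of $\Gamma/N$, observes that the number of fibres equals $k$, and asserts that Lemma~\ref{lemma:blocks} forces $k=2p$ once $k>2$.

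Where your write-up genuinely adds value is in the case $\chi=1$, where $G_v^{\Gamma(v)}$ is intransitive. There the fibre partition need not have parts of equal size, so Lemma~\ref{lemma:blocks} alone does not rule out the ``mixed'' partition $\{\Omega_i\}\cup\{\{w\}:w\in\Omega_j\}$ with $k=p+1$. The paper's one-line appeal to Lemma~\ref{lemma:blocks} passes over this possibility without comment. Your elimination of it via the $G$-arc-transitive orientation of Lemma~\ref{lemma:graphtodigraph} --- comparing in- and out-valency in the quotient digraph $\vGa/N$ --- is exactly the right way to close this gap, and is a cleaner justification than the paper provides. You also explicitly dispose of the loop case $v^N\cap\Gamma(v)\neq\emptyset$, which the paper's proof leaves implicit. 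In short: same route, but your version is more complete.
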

\begin{proof}
By Lemma~\ref{lemma:quotients}, $\Gamma/N$ is a connected vertex-transitive graph. Let $k$ be the valency of $\Gamma/N$. If $k\leq 1$ then~(1)  holds. If $k=2$ then~(2) holds. We may thus assume that $k>2$. Note that the equivalence classes for the equivalence relation ``being in the same $N$-orbit'' on $\Gamma(v)$ are blocks of $G_v^{\Gamma(v)}\cong\L_{p,\chi}$ and the number of such equivalence classes is exactly $k$.   By Lemma~\ref{lemma:blocks} (\ref{blocks}), it follows that $k=2p$ and $\Gamma$ is a regular cover of $\Gamma/N$.
\end{proof}

We now study Lemma~\ref{lemma:easy}~(\ref{cyclecase})  in more detail.

\begin{lemma}\label{lemma:cyclecycle}
Assume Hypothesis~A. Let $K$ be a normal subgroup of $G$ such that $\Gamma/K$ is a cycle of length $m\geq 3$ and $G/K$ acts faithfully on $\Gamma/K$.  Then an asymmetric orientation of $\Gamma/K$ induces an asymmetric orientation of $\Gamma$, yielding an asymmetric digraph $\vGa$ of in- and out-valency $p$, whose underlying graph is $\Gamma$. Let $G^+/K$ be the orientation-preserving subgroup of $G/K$. Then $G_v^*=(G^+)_v=K_v$, $\vGa$ is $G^+$-vertex-transitive, $G^+/K\cong\C_m$ and $(K_v)^{\vGa^+(v)}\cong\C_p$.
\end{lemma}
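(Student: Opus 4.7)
The plan is to first show $K_v = G_v^*$ by computing $|G_v : K_v|$ and identifying the $K_v$-orbits on $\Gamma(v)$, and then build the orientation of $\Gamma$ by lifting an orientation of the cycle $\Gamma/K$.

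Since $G_v K$ stabilises $v^K$ and $K$ is transitive on $v^K$, we have $G_v K = G_{v^K}$, so $G_v/K_v \cong G_vK/K = (G/K)_{v^K}$, a vertex stabiliser in the faithful, vertex-transitive action of $G/K$ on $\Gamma/K \cong C_m$. As $G/K \le \Aut(C_m) = \D_m$ must contain the rotation subgroup (being vertex-transitive on a cycle of length $m \ge 3$), we get $|G_v:K_v| \le 2$. Meanwhile, $K_v^{\Gamma(v)}$ is a normal subgroup of $G_v^{\Gamma(v)} = \L_{p,\chi}$ with at least two orbits by Lemma~\ref{lemma:quotients}(\ref{a5}). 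By Lemma~\ref{lemma:blocks}(\ref{blocks}) its orbits are either singletons or the two $\L_{p,1}$-orbits of size $p$; the singleton case would make $G_v^{\Gamma(v)}$ a quotient of $G_v/K_v$, forcing $\chi p^2 = |\L_{p,\chi}| \le |G_v:K_v| \le 2$, which is impossible. Hence $K_v^{\Gamma(v)}$ preserves each $\L_{p,1}$-orbit setwise, so $K_v^{\Gamma(v)} \le \L_{p,1}$ and therefore $K_v \le G_v^*$.

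A case split on $\chi$ now pins down $|G_v:K_v| = \chi$. If $\chi = 2$, then $G_v^{\Gamma(v)} = \L_{p,2}$ is transitive and so swaps the two $K_v$-orbits on $\Gamma(v)$; the image of $G_v$ in $(G/K)_{v^K}$ therefore swaps the two neighbours of $v^K$ in $C_m$, forcing $|G_v:K_v| = 2$. If $\chi = 1$, then $G_v^{\Gamma(v)} = \L_{p,1}$ is intransitive, so $G_v$ fixes each $K_v$-orbit setwise and hence fixes both neighbours of $v^K$; since the $\D_m$-stabiliser of two adjacent vertices of $C_m$ is trivial for $m \ge 3$, we get $|G_v:K_v| = 1$. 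Combined with $|G_v:G_v^*| = \chi$ from Lemma~\ref{lemma:Gv*} and $K_v \le G_v^*$, this gives $K_v = G_v^*$.

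For the orientation, the rotation subgroup $\C_m$ is characteristic in $\D_m$, so $G^+/K = (G/K) \cap \C_m \cong \C_m$. Fixing an orientation of $C_m$ and orienting an edge $\{a,b\}$ of $\Gamma$ as $(a,b)$ whenever $\{a^K, b^K\}$ is oriented $(a^K, b^K)$ in $C_m$ yields an asymmetric digraph $\vGa$ with underlying graph $\Gamma$; since each of the two $K$-orbits adjacent to $v^K$ contributes exactly $p$ elements to $\Gamma(v)$, the in- and out-valencies of $\vGa$ are both $p$. For $G^+$-vertex-transitivity, $K \le G^+$ is transitive on each $K$-orbit and $G^+/K \cong \C_m$ is transitive on $K$-orbits. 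For $(G^+)_v = K_v$: when $\chi = 1$, $G_v = K_v \le K \le G^+$; when $\chi = 2$, any element of $G_v \setminus K_v$ projects to a non-trivial stabiliser of $v^K$ in $\D_m$, which swaps two adjacent vertices and is therefore orientation-reversing, lying outside $G^+$. Finally, $\vGa^+(v)$ is one of the two $\L_{p,1}$-orbits on $\Gamma(v)$, and the $\C_p \times \C_p$-action restricts to a single $\C_p$-factor on each such orbit, giving $(K_v)^{\vGa^+(v)} \cong \C_p$. The main obstacle is the block-system analysis ruling out the singleton case and then pinning down $|G_v:K_v| = \chi$; after that, everything is essentially forced by the $\D_m$-action on the quotient cycle.
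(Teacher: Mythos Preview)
Your argument is correct in spirit and uses the same two ingredients as the paper's proof (the block description in Lemma~\ref{lemma:blocks} and the uniqueness of $G_v^*$ in Lemma~\ref{lemma:Gv*}); the only real difference is order of operations. The paper constructs the orientation first, works with $G^+$ throughout, shows $|G_v:(G^+)_v|=\chi$ via the equivalence ``$G/K$ arc-transitive on the cycle $\Leftrightarrow \chi=2$'', and deduces $G_v^*=(G^+)_v=K_v$ at the end. You instead pin down $K_v=G_v^*$ first by a direct case split on $\chi$, and only then build the orientation.

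There is one slip. Your claim that a vertex-transitive subgroup of $\D_m$ must contain the rotation subgroup $\C_m$ is false for $m$ even: the subgroup $\langle r^2, sr\rangle$ is vertex-transitive of order $m$ but meets $\C_m$ only in $\langle r^2\rangle$. This does not affect your bound $|G_v:K_v|\leq 2$ (which follows directly from $G/K\leq\D_m$), but it does undermine your later assertion $G^+/K=(G/K)\cap\C_m\cong\C_m$, since a priori $(G/K)\cap\C_m$ could be a proper subgroup of $\C_m$. The fix is to invoke edge-transitivity, which is part of Hypothesis~A: by Lemma~\ref{lemma:quotients}(\ref{a4}) the quotient $G/K$ is edge-transitive on the cycle, and a direct check shows that the only vertex- and edge-transitive subgroups of $\D_m$ are $\C_m$ and $\D_m$. (Equivalently, once you have established $|G_v:K_v|=\chi$ you know $|G/K|=m\chi$, and together with edge-transitivity this forces $G/K\in\{\C_m,\D_m\}$.) With this in hand the rest of your argument goes through.
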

\begin{proof}
Fix an asymmetric orientation of the cycle $\Gamma/K$. This induces an asymmetric orientation of $\Gamma$, yielding an asymmetric digraph $\vGa$ whose underlying graph is $\Gamma$. Clearly $\vGa$ (and thus also $\Gamma$) is $G^+$-vertex-transitive and, since $\Gamma$ is $2p$-valent, $\vGa$ has in- and out-valency $p$. Moreover, $G^+/K$ is a vertex-transitive group of automorphisms of the directed cycle $\vGa/K$ and thus $G^+/K\cong\C_m$. Further, $G^+/K<G/K$ if and only $G/K$ is arc-transitive, which in turn happens if and only if $\chi=2$. In particular, $|G/K:G^+/K|=\chi$. Since $G^+$ is vertex-transitive, it follows that $|G_v:(G^+)_v|=|G:G^+|=|G/K:G^+/K|=\chi$.

Note that $(G_v^+)^{\Gamma(v)}$ has two orbits which induce a system of imprimitivity for $G_v^{\Gamma(v)}$ with two blocks of size $p$. By Lemma~\ref{lemma:blocks}, it follows that $(G_v^+)^{\Gamma(v)}\cong\L_{p,1}$ and $(G^+_v)^{\vGa^+(v)}\cong\C_p$.  Since $|G_v:G^+_v|=\chi$, Lemma~\ref{lemma:Gv*} then implies $G_v^*=G_v^+$. Clearly, $G_v^+=K_v$.
\end{proof}

Let $\Gamma$ be a graph and let $N\leq\Aut(\Gamma)$. Besides the quotient graph $\Gamma/N$ defined in Section~\ref{subsec:quotient}, there is another way to quotient $\Gamma$ by $N$ which is sometimes more useful. To distinguish it from $\Gamma/N$, we call this other quotient the multi-quotient of $\Gamma$ with respect to $N$.  The precise definition of this quotient requires some setup and can be found in~\cite[Section~3]{CoveringReference}, for example. 

Since we only use it in the proof of Lemma~\ref{lemma:Malnic} and a precise definition would take us too far astray, we simply give a (slightly incomplete) definition. The \emph{multi-quotient} of $\Gamma$ with respect to $N$ has as vertices the orbits of $N$ on the vertices of $\Gamma$ and as arcs the orbits of $N$ on the arcs of $\Gamma$. Incidence is defined in the most natural way: if $(u,v)$ is an arc of $\Gamma$ then the arc $(u,v)^N$ of the multi-quotient has initial vertex $u^N$ and terminal vertex $v^N$. Note that, even if $\Gamma$ is a simple graph, the multi-quotient may not be: according to the terminology of~\cite[Section~3]{CoveringReference} it may have multiple edges, loops and semi-edges.

\begin{lemma}\label{lemma:Malnic}
Assume Hypothesis~A and assume that $G$ has a semiregular normal subgroup $N$ such that $\Gamma/N$ is a cycle of length $m\geq 3$. Let $K$ be the kernel of the action of $G$ on the $N$-orbits. Then $K_v$ is an elementary abelian $p$-group of order at most $p^m$.
\end{lemma}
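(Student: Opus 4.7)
My approach uses the multi-quotient $\vGa_N$ of the oriented digraph $\vGa$ associated to $(\Gamma, G)$ by Lemma~\ref{lemma:cyclecycle}. By that lemma $K_v = (G^+)_v$, and $K_v$ acts on $\Gamma(v)$ as $\L_{p,1} = \C_p \times \C_p$ with two orbits $\vGa^+(v)$ and $\vGa^-(v)$, on each of which it acts as $\C_p$. Let $v_0 = v^N, v_1, \ldots, v_{m-1}$ be the $N$-orbits, arranged along the cycle $\Gamma/N$ consistently with the orientation of $\vGa$. Since $N$ is semiregular on $\V(\Gamma)$ it is semiregular on arcs of $\vGa$ as well, so each set of $\vGa$-arcs from $v_i$ to $v_{i+1}$ contains $p|N|$ arcs splitting into $p$ $N$-orbits. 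Hence $\vGa_N$ is a directed multi-cycle of length $m$ with $p$ parallel arcs at each step. The plan is to show that $K_v$ embeds into $\C_p^m$.

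First, I would show that $K_v$ acts faithfully on $\vGa_N$. If $g \in K_v$ acts trivially, then for each $w \in \vGa^+(v)$ the $N$-orbit of the arc $(v,w)$ is fixed by $g$, so $(v,w)^g = (v,w)^n$ for some $n \in N$; since $v^g = v$ and $N$ is semiregular on vertices, $n = 1$, whence $w^g = w$. The analogous argument applied to $\vGa^-(v)$ shows that $g$ fixes $\Gamma(v)$ pointwise; iterating this over the connected graph $\Gamma$ yields $g = 1$.

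Next, I would fix $i\in\{0,\ldots,m-1\}$ and a vertex $u \in v_i$, and analyse the projection of $K_v \leq \Aut(\vGa_N)$ onto $\Sym(p)$ acting on the $p$ multi-out-arcs of $\vGa_N$ at $v_i$. These arcs biject with $\vGa^+(u)$ via $u' \mapsto (u,u')^N$. For $g \in K_v$, semiregularity of $N$ on $v_i$ yields a unique $n_g \in N$ with $u^g = u \cdot n_g$, and then $gn_g^{-1} \in K_u$ with
\[
((u,u')^N)^g = (u\cdot n_g,(u')^g)^N = (u,(u')^{gn_g^{-1}})^N.
\]
Hence the permutation of $\vGa^+(u)$ induced by $g$ via the bijection equals the one induced by $gn_g^{-1}$, and so lies in $(K_u)^{\vGa^+(u)} \cong \C_p$. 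Being the image of a group homomorphism $K_v \to \Sym(\vGa^+(u))$, it is a subgroup of $\C_p$.

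Combining these: by faithfulness $K_v$ embeds into the subgroup of $\Aut(\vGa_N)$ fixing every vertex, which is $\Sym(p)^m$; by the local analysis each of the $m$ coordinate projections factors through $\C_p$. Hence $K_v$ embeds into $\C_p^m$, proving both that $K_v$ is elementary abelian and that $|K_v| \leq p^m$. The main obstacle is the coordinate analysis: $K_v$ need not fix any vertex in $v_i$ for $i \neq 0$, so its action on the multi-out-arcs at $v_i$ cannot be read off directly from a single vertex-stabiliser; the trick of correcting $g$ by $n_g^{-1}\in N$ supplied by semiregularity is precisely what reduces the problem to a $\C_p$-action of $K_u$.
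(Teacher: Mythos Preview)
Your proof is correct and rests on the same underlying idea as the paper's---embedding $K_v$ into the vertex-kernel $\Sym(p)^m$ of the automorphism group of the multi-quotient of $\Gamma$ by $N$---but the finishing steps differ. The paper simply observes that, since $N$ is semiregular, $K_v \cong K_vN/N = K/N$ sits inside this $\Sym(p)^m$; then, since Lemma~\ref{lemma:cyclecycle} (which you also invoke) already gives $K_v = G_v^*$, a $p$-group by Lemma~\ref{lemma:Gv*}, it follows that $K_v$ lies in a Sylow $p$-subgroup of $\Sym(p)^m$, namely $\C_p^m$. Your argument instead carries out a coordinate-by-coordinate analysis: for each $i$ you use semiregularity of $N$ to correct $g$ to $gn_g^{-1}\in K_u$ and thereby show that the $i$-th $\Sym(p)$-projection of $K_v$ lands in $(K_u)^{\vGa^+(u)}\cong\C_p$. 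This is a valid and self-contained alternative---it does not need to know in advance that $K_v$ is a $p$-group, deducing this directly from the local $\C_p$-actions---but it is longer; once Lemma~\ref{lemma:cyclecycle} is available, the Sylow argument is a one-line finish.
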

\begin{proof}

Let $\Gamma'$ be the multi-quotient of $\Gamma$ with respect to $N$. Since $N$ is semiregular, $\Gamma$ is a regular cover of $\Gamma'$ and $G/N$ acts faithfully on $\Gamma'$ (in the sense of~\cite[Section~3]{CoveringReference}). In particular, $\Gamma'$ has the same valency as $\Gamma$, namely $2p$. Since $\Gamma'$ is edge-transitive and its underlying simple graph is a cycle of length $m$, $\Gamma'$ is a multi-cycle with $m$ vertices and with $p$ edges between each adjacent pair of vertices. Let $K'$ be the subgroup of $\Aut(\Gamma')$ fixing every vertex of $\Gamma'$. Note that $K'$ is isomorphic to the direct product of $m$ copies of $\Sym(p)$. In particular, a Sylow $p$-subgroup of $K'$ is elementary abelian of order $p^m$. Since $N$ is semiregular, $K_v\cong K_v/(N\cap K_v)\cong NK_v/N= K/N\leq K'$ and thus $K_v$ is isomorphic to a subgroup of $K'$ and the result follows.
\end{proof}

\begin{lemma}\label{lemma:reductionToUnique}
Assume Hypothesis~A. If $G$ has two distinct minimal normal subgroups $M$ and $N$ with $M$ non-abelian then $\Gamma$ is a regular cover of $\Gamma/N$. Moreover, $MN/N$ is a minimal normal subgroup of $G/N$ isomorphic to $M$.
\end{lemma}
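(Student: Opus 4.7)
My plan is to quickly dispose of the ``moreover'' statement via the second isomorphism theorem, and then prove the main regular-cover claim by contradiction. The hard part will be case (ii) in the contradiction argument, where I must apply Lemma~\ref{lemma:cyclecycle} to recognise that $K_v$ is a small $p$-group.

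First, since $M \cap N$ is a normal subgroup of $G$ properly contained in both distinct minimal normal subgroups, minimality forces $M \cap N = 1$; consequently $[M,N] \le M \cap N = 1$ and $MN = M \times N$. For the ``moreover'' statement, the second isomorphism theorem gives $MN/N \cong M/(M\cap N) = M$, so $MN/N \unlhd G/N$ is isomorphic to $M$. To show $MN/N$ is minimal normal in $G/N$, I take a $G$-normal subgroup $H$ with $N \le H \le MN$ and decompose $H = (H\cap M) \times N$ (since $H$ contains $N$ and sits inside $M \times N$); the subgroup $H \cap M$ is $G$-normal and contained in $M$, so by minimality of $M$ one has $H\cap M \in \{1,M\}$, and hence $H \in \{N, MN\}$.

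For the main claim I argue by contradiction: suppose $\Gamma$ is not a regular cover of $\Gamma/N$. By Lemma~\ref{lemma:easy}, either (i) $N$ has at most two orbits, or (ii) $\Gamma/N$ is a cycle of length $m \ge 3$. Let $K$ be the kernel of the action of $G$ on the set of $N$-orbits. In case (i), $G/K$ has order at most $2$; in case (ii), $G/K$ embeds in $\Aut(C_m) = D_{2m}$. In either case $G/K$ is soluble. Since $M$ is a direct product of isomorphic non-abelian simple groups, every non-trivial quotient of $M$ is non-soluble, so $MK/K \cong M/(M\cap K)$ must be trivial, i.e., $M \le K$. Combined with $M \cap N = 1$, the natural map $M \to K/N$ is injective, so $|M|$ divides $|K/N|$.

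It remains to bound $|K/N|$. In both cases the $K$-orbits coincide with the $N$-orbits (as $K \supseteq N$ and $K$ preserves each $N$-orbit), so $K$ is transitive on each of them; writing $s$ for the number of $N$-orbits, $|N| \ge |V(\Gamma)|/s$. In case (i), $G_v \le K$ because $G_v$ preserves the $N$-orbit of $v$ (and hence the other orbit when $s = 2$), so $K_v = G_v$ has order $\chi p^t$ and $|K| = \chi p^t \cdot |V(\Gamma)|/s$, yielding $|K/N| \le \chi p^t$. In case (ii), Lemma~\ref{lemma:cyclecycle} applies to $K$ (since $\Gamma/K = \Gamma/N$ is a cycle of length $m \ge 3$ and $G/K$ acts faithfully on it), giving $K_v = G_v^*$ of order $p^t$, whence $|K| = p^t \cdot |V(\Gamma)|/m$ and $|K/N| \le p^t$. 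In every case $|M|$ divides $\chi p^t = |G_v|$, so the prime divisors of $|M|$ lie in $\{2,p\}$. But $M$, being a direct product of isomorphic non-abelian simple groups, has order divisible by at least three distinct primes by Burnside's $p^aq^b$ theorem; this contradiction completes the proof.
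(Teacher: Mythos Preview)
Your argument is correct and follows essentially the same route as the paper: both use that $G/K$ is soluble (when $\Gamma/N$ has at most two vertices or is a cycle) to force $M\le K$, and then obtain a contradiction with Burnside's $p^aq^b$ theorem via the fact that $G_v$ is a $\{2,p\}$-group. One small slip to tighten: from $|M|\mid |K/N|$ and $|K/N|\le \chi p^t$ you cannot infer $|M|\mid \chi p^t$; instead note $K=NK_v$, so $K/N\cong K_v/N_v$ and hence $|K/N|$ divides $|K_v|$, which divides $|G_v|=\chi p^t$. (The paper's variant at this step is to choose $m\in M$ of prime order $q\notin\{2,p\}$, write $v^{m}=v^{n}$ for some $n\in N$, and observe that $mn^{-1}\in G_v$ has order divisible by $q$.)
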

\begin{proof}
Note that $MN=N\times M$. Let $K$ be the kernel of the action of $G$ on the $N$-orbits.

Suppose that $M\leq K$. Then $v^M\subseteq v^K=v^N$. Since $M$ is not soluble, by Burnside's Theorem there exists a prime $q$ dividing $|M|$ and different from $2$ and $p$. Let $m\in M$ be an element of order $q$. We have $v^m\in v^N$ and hence $v^m=v^n$ for some $n\in N$. This gives $mn^{-1}\in G_v$. Since $|mn^{-1}|=\lcm(|m|,|n|)=\lcm(q,|n|)$ and $G_v$ is a $\{2,p\}$-group, this is a contradiction.

We may thus assume that $M\nleq K$. By minimality of $M$, we obtain $M\cap K=1$ and hence $M\cong MK/K\leq\Aut(\Gamma/N)$. Suppose that $\Gamma$ is not a regular cover of $\Gamma/N$. Then by Lemma~\ref{lemma:easy}, $\Gamma/N$ either has at most two vertices or is a cycle. In particular, $\Aut(\Gamma/N)$ is soluble which contradicts the fact that it contains a subgroup isomorphic to $M$. This shows that $\Gamma$ is a regular cover of $\Gamma/N$ and, by Lemma~\ref{lemma:quotients}, $N=K$. Clearly, $MN/N$ is a minimal normal subgroup of $G/N$.
\end{proof}

An easy application of Lemma~\ref{lemma:reductionToUnique} yields the following:

\begin{corollary}\label{corollary:nonabelian}
Assume Hypothesis~A. If $G$ has a non-abelian minimal normal subgroup then $G$ contains a normal subgroup $N$ such that $\Gamma$ is a regular cover of $\Gamma/N$ and $G/N$ has a unique minimal normal subgroup and this subgroup is non-abelian.
\end{corollary}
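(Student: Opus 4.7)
The plan is to argue by induction on $|G|$. Let $M$ be a non-abelian minimal normal subgroup of $G$. If $M$ happens to be the \emph{unique} minimal normal subgroup of $G$, then we simply take $N=1$: the identity map $\Gamma\to\Gamma/1$ is trivially a regular cover, and $G/1=G$ already has the required property. This disposes of the base case.

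Now suppose $G$ admits some other minimal normal subgroup $N'\neq M$. We are in exactly the situation of Lemma~\ref{lemma:reductionToUnique}, which yields two crucial facts at once: $\Gamma$ is a regular cover of $\Gamma/N'$, and $MN'/N'$ is a non-abelian minimal normal subgroup of $G/N'$ (being isomorphic to $M$). Corollary~\ref{lemma:cover} then transfers Hypothesis~A to the pair $(\Gamma/N',G/N')$. Since $|G/N'|<|G|$ and $G/N'$ still has a non-abelian minimal normal subgroup, the inductive hypothesis applies: there exists a normal subgroup $\bar N\trianglelefteq G/N'$ such that $\Gamma/N'$ is a regular cover of $(\Gamma/N')/\bar N$, and $(G/N')/\bar N$ has a unique minimal normal subgroup, which is non-abelian.

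To close the induction, let $N$ be the preimage of $\bar N$ in $G$, so $N'\leq N$ and $\bar N=N/N'$. By the correspondence theorem, $G/N\cong (G/N')/\bar N$, and similarly the two-step quotient on graphs gives $\Gamma/N\cong (\Gamma/N')/\bar N$. The only point needing a moment's thought is that the compositum of two regular covers $\Gamma\to\Gamma/N'\to\Gamma/N$ remains a regular cover of $\Gamma/N$; this is immediate from the definition since the composition of two local bijections is a local bijection (and can also be read off from Lemma~\ref{lemma:quotients}(\ref{a6}), since the valency is preserved at each stage).

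I do not expect any real obstacle: everything reduces cleanly to an application of Lemma~\ref{lemma:reductionToUnique} followed by the correspondence of normal subgroups and the transitivity of regular covers. The only step one must check with some care is that the two reductions are compatible — i.e.\ that the preimage $N$ of $\bar N$ yields both a regular cover \emph{and} the correct quotient group — but both points are automatic from Section~\ref{sec:quotient}.
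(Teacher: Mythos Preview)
Your proof is correct and follows essentially the same approach as the paper's: induction, base case $N=1$ when the non-abelian minimal normal subgroup is unique, otherwise apply Lemma~\ref{lemma:reductionToUnique} to pass to a quotient, invoke Corollary~\ref{lemma:cover}, apply induction, and pull back via correspondence and transitivity of regular covers. The only cosmetic difference is that the paper inducts on $|\V(\Gamma)|$ rather than $|G|$; both work, since the quotienting subgroup is nontrivial (and in fact semiregular).
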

\begin{proof}
Let $U$ be a non-abelian minimal normal subgroup of $G$. We argue by induction on $|\V(\Gamma)|$. If $U$ is the unique minimal normal subgroup of $G$ then we may take $N=1$. Otherwise, let $V$ be a minimal normal subgroup of $G$ with $V\neq U$. By Lemma~\ref{lemma:reductionToUnique}, $\Gamma$ is a regular cover of $\Gamma/V$ and, by Corollary~\ref{lemma:cover}, Hypothesis~A holds with $(\Gamma,G)$ replaced by $(\Gamma/V,G/V)$. Clearly, $UV/V$ is a minimal normal subgroup of $G/V$ isomorphic to $U$. 

Since $|\V(\Gamma/V)|<|\V(\Gamma)|$, we may use induction to conclude that $G/V$ contains a normal subgroup $N/V$ such that $\Gamma/V$ is a regular cover of $(\Gamma/V)/(N/V)\cong \Gamma/N$ and $(G/V)/(N/V)\cong G/N$ has a unique minimal normal subgroup and this subgroup is non-abelian. Since $\Gamma$ is a regular cover of $\Gamma/V$ which is a regular cover of $\Gamma/N$, $\Gamma$ is a regular cover of $\Gamma/N$.
\end{proof}

All the results in this section so far have been direct consequences of the definition of $\L_{p,\chi}$. We now apply some slightly deeper group theoretic results to establish some structural properties of $G_v^*$. For ease of exposition, we mostly follow~\cite{Currano} but these results can actually trace their lineage through Glauberman~\cite{Glaub} and Sims~\cite{Sims} to Tutte's results on $3$-valent arc-transitive graphs~\cite{Tutte,Tutte2}.

\begin{theorem}\label{lemma:ea}
Assume Hypothesis~A. Then the following hold:

\begin{enumerate}
\item $G_v^*$ has nilpotency class at most $3$;\label{bonbonbon}
\item $G_v^*$ contains an elementary abelian $p$-subgroup of order at least  $|G_v^*|^{2/3}$; \label{bonbon}
\item $|\Zent {G_v^*}|^3\geq |G_v^*|$; \label{boubou}
\item $G_v^*$ has exponent at most $p^2$.
\end{enumerate}
\end{theorem}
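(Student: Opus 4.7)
The plan is to reduce to the half-arc-transitive setting and then invoke the structural analysis of Currano~\cite{Currano}, which itself refines the classical Sims--Glauberman--Weiss approach. By Lemma~\ref{lemma:Gv*}, $G_v^*$ is a $p$-group with $(G_v^*)^{\Gamma(v)} \cong \L_{p,1}$, i.e.\ $\C_p \times \C_p$ acting intransitively on two orbits of length $p$. Each of the statements (1)--(4) is intrinsic to the abstract group $G_v^*$ together with its successive action on balls around $v$, so the case $\chi = 2$ reduces at once to working with $G_v^*$ directly; no globally vertex-transitive realisation is needed. In the case $\chi = 1$ we have $G_v = G_v^*$ and Lemma~\ref{lemma:graphtodigraph} provides a $G$-arc-transitive asymmetric orientation $\vGa$ of $\Gamma$ of out-valency $p$ with local group $\C_p$, which is exactly Currano's context.

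Parts (1)--(3) then follow from the standard filtration technique. Setting $G_v^{[i]}$ to be the kernel of $G_v^*$ on the ball of radius $i$ around $v$, and exploiting the intransitive $\C_p \times \C_p$-action on $\Gamma(v)$ together with the symmetry at an edge $\{u,v\}$ coming from arc-transitivity on $\vGa$, one proves $G_v^{[3]} = 1$ via commutator computations in the Sims--Glauberman--Weiss style. From $[G_v^*, G_v^{[i]}] \leq G_v^{[i+1]}$ one reads off nilpotency class at most $3$; the elementary abelian subgroup of order at least $|G_v^*|^{2/3}$ is extracted from a suitable abelian section of the filtration; and $|\Zent{G_v^*}|^3 \geq |G_v^*|$ is then a standard consequence in a class-$3$ $p$-group (via $|G/\Zent{G}| \cdot |\gamma_2(G)/\gamma_3(G)| \cdot |\gamma_3(G)|$ type estimates).

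For part (4), I would apply Lemma~\ref{poddtechnical}, which requires two things: that $G_v^*$ is generated by elements of order $p$, and that $\gamma_2(G_v^*)$ is elementary abelian. The first follows from $G_v^*$ being generated by its arc-stabilisers $G_{uv}$ (for $u \in \Gamma(v)$), each of which contains the obvious order-$p$ lift of a generator of the stabiliser of a point in $\L_{p,1}$, together with the fact—established in Currano's analysis—that $G_v^{[1]}$ itself is generated by elements of order $p$. The second is extracted from the class-$3$ commutator identities produced in the proof of (1)--(3): $\gamma_2(G_v^*) \leq G_v^{[1]}$, and the commutator calculations in $G_v^{[1]}$ force every element of $\gamma_2(G_v^*)$ to have order dividing $p$. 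Lemma~\ref{poddtechnical} then gives exponent at most $p^2$.

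The main obstacle is isolating the elementary-abelian property of $\gamma_2(G_v^*)$ cleanly out of the Currano machinery (it is implicit there but is not usually stated as a separate conclusion), and verifying in the $\chi = 2$ case that every step of Currano's analysis is truly local, i.e.\ uses only path-stabilisers that lie inside $G_v^*$ and $G_u^*$ for adjacent $u,v$, so that the absence of a globally vertex-transitive subgroup with vertex-stabiliser $G_v^*$ causes no difficulty.
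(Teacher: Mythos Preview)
Your plan contains two concrete errors that make the proposed route unworkable.

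First, the claim that $G_v^{[3]} = 1$ (kernel on the ball of radius $3$) is false in this setting. Take $\Gamma = \PX(p,r,1)$ with the group $H = \C_p \wr \D_r$: here $G_v^* = H_v^*$ is the subgroup of the base $\C_p^r$ vanishing in the $0$-th coordinate, of order $p^{r-1}$, and the stabiliser of the radius-$3$ ball is the subgroup vanishing in coordinates $-3,\ldots,3$, which is nontrivial as soon as $r\ge 8$. The Sims--Glauberman--Weiss ball-stabiliser argument gives bounded depth only when the local action has strong transitivity properties (e.g.\ $2$-transitivity); with $\L_{p,1}\cong\C_p\times\C_p$ no such bound exists, which is exactly why the $\PX$ family is infinite. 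Consequently the chain $[G_v^*,G_v^{[i]}]\le G_v^{[i+1]}$ (even granting it) cannot deliver nilpotency class~$3$.

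Second, $|\Zent{G_v^*}|^3\ge |G_v^*|$ is \emph{not} a consequence of having class at most $3$: any extraspecial $p$-group of order $p^{2n+1}$ with $n\ge 2$ has class $2$ and centre of order $p$. Part~(\ref{boubou}) must come from the specific Currano parameters, not from abstract class considerations.

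What the paper actually does avoids both pitfalls and also dissolves your $\chi=2$ worry. It does not split on $\chi$ at all: choosing representatives $u,w$ of the two $(G_v^*)^{\Gamma(v)}$-orbits, edge-transitivity of $G$ forces $(u,v)$ and $(v,w)$ into the same $G$-orbit, so one picks $\phi\in G$ with $(u,v)^\phi=(v,w)$, verifies that $\langle G_v^*,\phi\rangle$ is vertex-transitive, and then checks Currano's hypothesis~(1.1) directly (no $\phi$-invariant subgroup of $G_{uv}$ can survive, since it would be normal in a vertex-transitive group while sitting inside a point stabiliser). Currano's Theorem~1 and Lemma~2.2 then output the parameters $u,v,t$ with $u\ge \tfrac{2}{3}t$ and $v=t-u$, from which (\ref{bonbonbon}), (\ref{bonbon}), (\ref{boubou}) are read off numerically: $|E|=p^u\ge p^{2t/3}$ and $|\Zent{G_v^*}|=p^{2u-t}\ge p^{t/3}$. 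Currano also gives that $G_v^*$ is generated by elements of order $p$ and that $\gamma_2(G_v^*)$ is elementary abelian, feeding Lemma~\ref{poddtechnical} for part~(4), as you anticipated.
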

\begin{proof}

By Lemma~\ref{lemma:Gv*}, $(G_v^*)^{\Gamma(v)}\cong\L_{p,1}$ and $G_v^*$ is a $p$-group. Let $|G_v^*|=p^t$ and let $u$ and $w$ be representatives for the two orbits of $(G_v^*)^{\Gamma(v)}$. We show that the arcs $(u,v)$ and $(v,w)$ are in the same $G$-orbit. We argue by contradiction and we suppose that this is not the case. Since $\Gamma$ is $G$-edge-transitive, it follows that $(u,v)$ is in the same $G$-orbit as $(w,v)$. This implies that $u$ and $w$ are in the same $G_v$-orbit and hence $G_v^{\Gamma(v)}$ is transitive. It follows that $\Gamma$ is $G$-arc-transitive and hence $(u,v)$ and $(v,w)$ are in the same $G$-orbit, which is a contradiction. 

Let $\phi\in G$ such that $(u,v)^\phi=(v,w)$. We show that $\langle G_v^*,\phi \rangle$ is transitive on $\V(\Gamma)$. For $i\in\ZZ$, let $v_i=v^{\phi^i}$. Note that $(v_{-1},v_0,v_1)=(u,v,w)$ and hence $\Gamma(v_0)=(v_{-1})^{G_{v_0}^*}\cup (v_{1})^{G_{v_0}^*}$. Conjugating by $\phi^i$, we obtain that $\Gamma(v_i)=(v_{i-1})^{G_{v_i}^*}\cup (v_{i+1})^{G_{v_i}^*}$ for every $i\in\ZZ$. Let $G^*=\langle G_{v_i}^*\mid i\in\ZZ\rangle$ and let $S=v^{\langle \phi\rangle}=\{v_i\mid i\in\ZZ\}$. Note that $G^*\leq \langle G_v^*,\phi \rangle$, and  hence it suffices to show that $S^{G^*}=\V(\Gamma)$. By contradiction, suppose that there exists a vertex not in $S^{G^*}$ and choose one with minimum distance to $S$. Call this vertex $\alpha$ and let $(p_0,\ldots,p_{n-1},p_n)$ be a shortest path from $\alpha$ to a vertex of $S$. In particular, $p_0=\alpha$ and $p_n=v_i$ for some $i\in\ZZ$.  Since $\Gamma(v_i)=(v_{i-1})^{G_{v_i}^*}\cup (v_{i+1})^{G_{v_i}^*}$, there exists $\sigma\in G_{v_i}^*\leq G^*$ such that $(p_{n-1})^\sigma\in\{v_{i-1},v_{i+1}\}\subseteq S$. Since $\alpha$ is not in $S^{G^*}$, neither is $\alpha^\sigma$, but $\alpha^\sigma$ is closer to $S$ than $\alpha$ is, which is a contradiction. 

From now on, we follow the notation of~\cite{Currano} and~\cite{Glaub} as closely as possible. Let $P=G_v^*$, let $R=G_{uv}$ and let $Q=G_{vw}$. Note that $R^\phi=Q$, and $R$ and $Q$ both have index $p$ in $P$. 

Let $N$ be the subgroup of $P$ generated by all the subgroups of $R$ that are normalised by $\phi$. By~\cite[Proposition~2.1]{Glaub}, $N$ is normal in $P$. By definition, $N$ is normalised by $\phi$ and hence $N$ is normalised by $\langle P,\phi \rangle$. On the other hand, we have shown that $\langle P,\phi \rangle$ is transitive on $\V(\Gamma)$.  Since $N\leq P\leq G_v$, it follows that $N=1$. This shows that condition (1.1) of~\cite{Currano} is satisfied. 

Let $u$, $v$ and $x_1,\ldots,x_t$ be as in~\cite[Theorem~1]{Currano} and let $E=\langle x_1,\ldots,x_u\rangle$. By~\cite[Lemma~2.2~(d,f,g)]{Currano}, $P$ has nilpotency class at most $3$, $\Zent{P}=\langle x_{v+1},\ldots,x_u\rangle$ and $\gamma_2(P)$ is elementary abelian. It follows from~\cite[Lemma~2.1]{Currano} that $P$ is generated by elements of order $p$ and that $|P|=p^t$, $|E|=p^u$ and $|\Zent P|=p^{u-v}$. It also follows from~\cite[Theorem~1~(1.2-1.5)]{Currano} that $v=t-u$, $u\geq \frac{2}{3}t$, $E\leq P$ and $E$ is elementary abelian which concludes the proof of (\ref{bonbon}). Since $v=t-u$, $|\Zent P|=p^{u-v}=p^{2u-t}\geq p^{t/3}=|P|^{1/3}$ and (\ref{boubou}) follows. Finally, $P$ has exponent at most $p^2$ by Lemma~\ref{poddtechnical}.

\end{proof}

\section{The graphs $\PX(p,r,s)$}\label{sec:graphsCPRS}
We now define the graphs $\PX(p,r,s)$ which appear in our main theorem and prove some useful results about these graphs. These graphs were first studied by Praeger and Xu~\cite{PraegerXu}. 

Let $p$ be a prime and let $r$ and $s$ be positive integers with $r\geq 3$ and $1\leq s\leq r-1$. The graph $\PX(p,r,1)$ is the \emph{lexicographic product} of a cycle of length $r$ and an edgeless graph on $p$ vertices. In other words, $\V(\PX(p,r,1))=\ZZ_p\times\ZZ_r$ with $(u,i)$ being adjacent to $(v,j)$ if and only if $i-j\in\{-1,1\}$. A path in $\PX(p,r,1)$ is called {\em traversing} if it contains at most one vertex from $\ZZ_p\times\{y\}$ for each $y\in\ZZ_r$. For $s\geq 2$, the graph $\PX(p,r,s)$ has vertex-set the set of traversing paths of $\PX(p,r,1)$ of length $s-1$, with two such paths being adjacent in $\PX(p,r,s)$ if and only if their union is a traversing path of length $s$ in $\PX(p,r,1)$.

It is easy to see that $\PX(p,r,s)$ is a connected $2p$-valent graph with $rp^s$ vertices. There is an obvious action of the wreath product $\Sym(p)\wr \D_r$ as a group of automorphisms of $\PX(p,r,1)$ with an induced faithful arc-transitive action on $\PX(p,r,s)$. Let $H=\C_p\wr \D_r\leq \Sym(p)\wr \D_r$, let $\Gamma=\PX(p,r,s)$ and let $v$ be a vertex of $\Gamma$. It is easily seen that if $s\leq r-2$ then $H_v^{\Gamma(v)}\cong\L_{p,2}$ and $|H_v|= 2p^{r-s}$. Thus, if $p$ and $s$ are fixed then $|H_v|$ grows exponentially with $r$ and hence exponentially  with $|\V(\Gamma)|$. This behaviour contrasts sharply with the upper bounds which are the other possibilities in the conclusion of Theorem~\ref{theorem:main2}. 

One of our main tools is  the following striking result due to Praeger and Xu.

\begin{theorem}[\rm{\cite[Theorem~$1$]{PraegerXu}}]\label{PrePXu}
Let $p$ be a prime and let $\Gamma$ be a connected $G$-arc-transitive graph of valency $2p$. If $G$ has an abelian normal $p$-subgroup which is not semiregular then $\Gamma\cong \PX(p,r,s)$ for some $r\geq 3$ and $1\leq s\leq r-1$. 
\end{theorem}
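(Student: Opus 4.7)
The plan is to prove this by first pinning down the local orbit structure forced on an abelian normal $p$-subgroup with a nontrivial point-stabiliser, then showing that $\Gamma/N$ is a cycle, and finally matching the resulting regular cover with the Praeger--Xu construction.

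Let $N\trianglelefteq G$ be the abelian $p$-subgroup in question and let $v$ be a vertex with $N_v\neq 1$. First I would observe that because $N$ is abelian, $N_v$ fixes every vertex of $v^N$ pointwise, since $N_{v^n}=N_v^n=N_v$ for each $n\in N$. As $N_v\trianglelefteq G_v$ and $G_v$ is transitive on $\Gamma(v)$ (using $G$-arc-transitivity and valency $2p$), the orbits of $N_v$ on $\Gamma(v)$ form a block system whose block size is a $p$-power dividing $2p$. A standard connectivity-plus-propagation argument rules out the trivial action: if $N_v$ fixed $\Gamma(v)$ pointwise it would fix $\Gamma$ pointwise by connectivity, contradicting $N_v\neq 1$. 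Apart from a small exception when $p=2$ (handled separately by direct inspection), this forces $N_v$ to have exactly two orbits of size $p$ on $\Gamma(v)$.

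From here I would pass to $\Gamma/N$. The two $N_v$-orbits on $\Gamma(v)$ correspond to exactly two $N$-orbits adjacent to $v^N$, so $\Gamma/N$ has valency $2$. By Lemma~\ref{lemma:quotients}, $\Gamma/N$ is connected and $G$-edge-transitive, hence is a cycle of some length $r\geq 3$ (the cases $r\leq 2$ are incompatible with $\Gamma$ being a simple $2p$-valent graph). Moreover, since each $N_v$-orbit on $\Gamma(v)$ has size $p$ matching the valency count $2p=2\cdot p$, the projection $\Gamma\to\Gamma/N$ is a regular cover and $N$ is semiregular on each fibre.

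To identify $\Gamma$ with some $\PX(p,r,s)$, I would label the $N$-orbits cyclically as $0,1,\ldots,r-1$ and analyse the descending filtration of $N$ by pointwise stabilisers of combinatorial balls about $v$: $N_v\geq N_v^{[1]}\geq N_v^{[2]}\geq\cdots$, where $N_v^{[i]}$ fixes every vertex at distance at most $i$ from $v$. Using that $N$ is an abelian $p$-group acting trivially on $v^N$ and with controlled orbit structure at each step, I would show that the successive quotients are elementary abelian of rank at most $1$ (after identifying them via conjugation along the cycle $\Gamma/N$) and that the length $s$ of the filtration determines the cover completely, matching the description of $\PX(p,r,s)$ as the graph of traversing paths of length $s-1$ in $\PX(p,r,1)$.

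The main obstacle will be this last identification: extracting the combinatorial structure of the cover from the abstract action of $N$ and showing that $N$ carries the correct wreath-like shape. Concretely, one has to verify that conjugation in $G$ cyclically shifts the generators of $N$ along the cycle, that $s$ lies in $\{1,\ldots,r-1\}$ (the case $s=r$ would produce multi-edges and contradict simplicity), and that the isomorphism type of $\Gamma$ depends only on the pair $(r,s)$. This step naturally proceeds by induction on $r$ or by a direct computation with the action of $N$ on adjacent fibres, after which the conclusion of the theorem follows immediately.
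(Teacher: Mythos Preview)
The paper does not give a proof of this statement at all: Theorem~\ref{PrePXu} is simply quoted from \cite[Theorem~1]{PraegerXu} and used as a black box. So there is nothing in the paper to compare your argument against.

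That said, your sketch contains a genuine error that would derail the argument. You write that ``the projection $\Gamma\to\Gamma/N$ is a regular cover and $N$ is semiregular on each fibre.'' This is false by hypothesis: you have fixed $v$ with $N_v\neq 1$, so $N$ does not act regularly on the fibre $v^N$, and $\Gamma\to\Gamma/N$ is \emph{not} a regular cover in the sense of Section~\ref{subsec:quotient}. Indeed, the whole point of the Praeger--Xu family is that the elementary abelian normal $p$-subgroup has nontrivial vertex-stabilisers; if the quotient were a regular cover you would get $|N_v|=1$ and the graph would not be a $\PX$ graph at all. Your subsequent ``identification'' paragraph leans on this regular-cover picture, so it does not go through as written.

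There is also a gap one step earlier: from ``$N_v$ has two orbits of size $p$ on $\Gamma(v)$'' you conclude that $\Gamma/N$ has valency $2$. The valency of $\Gamma/N$ is the number of $N$-orbits (not $N_v$-orbits) meeting $\Gamma(v)$, and two distinct $N_v$-orbits on $\Gamma(v)$ could a priori lie in the same $N$-orbit on $\V(\Gamma)$, or one of them could be contained in $v^N$ itself. Ruling these out is exactly where the work in the Praeger--Xu proof lies; it requires analysing how $N$ acts on neighbouring fibres and using arc-transitivity more carefully than a one-line appeal to block systems. If you want to reconstruct the original argument, the correct picture is that $N$ acts \emph{non}-regularly on each fibre with kernel of index $p$, and the identification with $\PX(p,r,s)$ comes from matching this chain of kernels to the chain of pointwise stabilisers of traversing paths, not from any covering-space argument.
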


There is also an analogous result by Praeger about asymmetric arc-transitive digraphs. Note that the cyclic ordering of $\ZZ_r$ induces a natural asymmetric orientation of $\PX(p,r,s)$, which we denote $\vPX(p,r,s)$.

\begin{theorem}[\rm{\cite[Theorem~$2.9$]{PraegerDigraph}}]\label{PraDigraph}
Let $p$ be a prime and let $\vGa$ be an asymmetric connected $G$-arc-transitive digraph of  out-valency $p$. If $G$ has an abelian normal subgroup which is not semiregular then $\vGa\cong \vPX(p,r,s)$ for some $r\geq 3$ and $1\leq s\leq r-1$.
\end{theorem}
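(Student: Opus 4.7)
The plan is to mimic the Praeger--Xu strategy (Theorem~\ref{PrePXu}) in the asymmetric digraph setting. First I would reduce to the case of an abelian normal $p$-subgroup. Given the non-semiregular abelian normal $A\unlhd G$ with $A_v\neq 1$, note that $A_v\unlhd G_v$, and the orbits of $A_v$ on $\vGa^+(v)$ form a $G_v$-invariant partition of a set of prime size $p$, so $A_v^{\vGa^+(v)}$ is either trivial or transitive. I would rule out the trivial case by strong connectivity: if $A_v$ fixed $\vGa^+(v)$ pointwise, then for each out-neighbour $w$ of $v$ we would have $A_v\leq A_w$, and since $A_v,A_w$ are $G$-conjugate (hence of the same order) this forces $A_v=A_w$; by $G$-arc-transitivity the conjugate action shows $A_w$ also fixes $\vGa^+(w)$ pointwise, and iterating along directed paths (using that a connected vertex-transitive digraph of equal positive in- and out-valency is strongly connected) would give $A_v=1$, a contradiction. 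Hence $p\mid|A_v|$, and the characteristic Sylow $p$-subgroup $P$ of $A$ is a non-semiregular abelian normal $p$-subgroup of $G$ whose stabiliser $P_v$ is transitive on $\vGa^+(v)$.

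Next I would analyse the quotient by the $P$-orbits. Transitivity of $P_v$ on $\vGa^+(v)$ implies that all out-neighbours of $v$ lie in a single $P$-orbit; because $P$ is abelian this rule propagates consistently, so every $P$-orbit $B$ has a well-defined successor orbit $f(B)$ receiving all arcs out of $B$, and dually a well-defined predecessor. Thus the quotient digraph $\vGa/P$ is vertex-transitive with in- and out-valency one, and the connectedness of $\vGa$ forces it to be a single directed cycle $B_0\to B_1\to\cdots\to B_{r-1}\to B_0$. The degenerate small cases $r\leq 2$ must be treated separately; they either violate asymmetry or fall into $\vPX(p,r,s)$ after extending the conventions, so we may assume $r\geq 3$.

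The final step is to identify $\vGa$ as $\vPX(p,r,s)$. By $G$-vertex-transitivity each $B_i$ has a common size $p^s$, with $s\geq 1$ since $P$ is not semiregular. Using that $P$ acts abelianly on each $B_i$ one would coordinate $B_i\cong\ZZ_p^s$ compatibly along the cyclic sequence, and arc-transitivity of $G$ on $\vGa$ should then rigidify the arcs between consecutive layers so that an out-neighbour of $(x_1,\ldots,x_s)\in B_i$ is forced to be of the form $(x_2,\ldots,x_s,y)\in B_{i+1}$ as $y$ ranges over $\ZZ_p$, matching the traversing-path description of $\vPX(p,r,s)$. The bound $s\leq r-1$ follows because $s=r$ would collapse the cyclic shift structure and produce an additional abelian normal subgroup incompatible with the asymmetry and arc-transitivity of $\vGa$.

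The hard part is this last identification. Building a cyclic quotient is routine, but pinning down the internal arc pattern as precisely the traversing-path rule of $\vPX(p,r,s)$ requires matching abstract data about the $P$-action and its arc-transitive complement with the explicit combinatorial model. In the undirected Praeger--Xu theorem this is achieved via a detailed analysis of how $\C_p\wr\D_r$ arises as a quotient; for the digraph version I would expect to replace $\D_r$ by $\C_r$ and use arc-transitivity of $G$ on $\vGa$ to force a unique coordinatisation on each block.
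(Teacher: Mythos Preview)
The paper does not prove this theorem. Theorem~\ref{PraDigraph} is quoted verbatim from Praeger's 1989 paper \cite{PraegerDigraph} (as the citation in the theorem header indicates) and is used as a black box: the only place it is invoked is in the proof of Corollary~\ref{PXu}, where it is applied directly with no further argument. So there is no ``paper's own proof'' to compare your proposal against.

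That said, your outline is a reasonable sketch of how the original Praeger argument goes, and the reduction step (showing $A_v^{\vGa^+(v)}$ is transitive, passing to the Sylow $p$-subgroup, and obtaining a directed-cycle quotient) is essentially correct. Your honest assessment of the final identification step is accurate: pinning down the arc pattern between consecutive $P$-orbits as exactly the traversing-path rule is the genuine content of the result, and what you have written there is a plan rather than a proof. In particular, the claims that each block can be coordinatised as $\ZZ_p^s$ compatibly around the cycle, that the arcs take the shift form $(x_1,\ldots,x_s)\mapsto(x_2,\ldots,x_s,y)$, and that $s\leq r-1$, all require real work (in Praeger's treatment this goes through an analysis of the kernel of the action on $P$-orbits and an inductive construction). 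Your handling of the degenerate cases $r\leq 2$ is also too brisk; one must actually argue that asymmetry forces $r\geq 3$. If you want a complete proof you should consult \cite{PraegerDigraph} directly.
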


These two results can be combined to yield the following.

\begin{corollary}\label{PXu}
Assume Hypothesis~A. If $G$ has an abelian normal subgroup which is not semiregular then $\Gamma\cong \PX(p,r,s)$ for some $r\geq 3$ and $1\leq s\leq r-1$.
\end{corollary}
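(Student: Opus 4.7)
The plan is to reduce the two cases $\chi=1$ and $\chi=2$ to Theorems~\ref{PraDigraph} and~\ref{PrePXu}, respectively. Let $A$ denote the given abelian normal subgroup of $G$ that is not semiregular.

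If $\chi=1$, then Lemma~\ref{lemma:graphtodigraph} furnishes an asymmetric $G$-arc-transitive orientation $\vGa$ of $\Gamma$ of out-valency $p$, with the same vertex set as $\Gamma$. Since $A$ is an abelian normal subgroup of $G$ that is not semiregular on $\V(\vGa)=\V(\Gamma)$, Theorem~\ref{PraDigraph} applies and yields $\vGa\cong\vPX(p,r,s)$ for some $r\geq 3$ and $1\leq s\leq r-1$. Passing to underlying graphs gives $\Gamma\cong\PX(p,r,s)$, as required.

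For $\chi=2$ the graph $\Gamma$ is $G$-arc-transitive, so it will suffice to exhibit an abelian normal $p$-subgroup of $G$ that is not semiregular and then invoke Theorem~\ref{PrePXu}. The obvious candidate is the Sylow $p$-subgroup $P$ of $A$, which is characteristic in $A$ and hence normal in $G$. The task is therefore to promote the non-semiregularity of $A$ to that of $P$, i.e.\ to rule out the possibility that the non-semiregularity of $A$ is carried entirely by its $p'$-part. When $p=2$ this is immediate: any element of $A$ of odd order lies outside $G_v$ (a $2$-group of order $2^{t+1}$), so the Hall $2'$-subgroup of $A$ is semiregular and $A$ non-semiregular forces $P$ non-semiregular.

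The main obstacle is the subcase $p$ odd with $P$ semiregular, which I plan to rule out by contradiction. For $v$ with $A_v\neq 1$, the abelian group $A_v$ has trivial Sylow $p$-part (since $P_v=1$) and is contained in $G_v$ of order $2p^t$, forcing $|A_v|=2$. Since $A$ is normal in $G$, the subgroup $A_v=A\cap G_v$ is normal in $G_v$, so $A_v^{\Gamma(v)}$ is a normal subgroup of $\L_{p,2}$ of order dividing $2$; being generated by a conjugation-invariant element, such a subgroup is central. A direct check shows $Z(\L_{p,2})$ is the ``diagonal'' subgroup $\{(a,a):a\in\C_p\}$ of order $p$, which is odd and thus free of involutions. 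Hence $A_v^{\Gamma(v)}=1$, so $A_v$ fixes $\Gamma(v)$ pointwise and lies in $G_{uv}$ for any neighbour $u$ of $v$. But $G_{uv}$ is a $p$-group, as noted in the proof of Lemma~\ref{lemma:Gv*}, which contradicts $|A_v|=2$ with $p$ odd.
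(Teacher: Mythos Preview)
Your proof is correct. The paper takes a slightly more uniform route: rather than splitting on $\chi$ and then on $p$, it shows directly that $N_v$ is a $p$-group by observing that $N_v^{\Gamma(v)}$ is a non-trivial abelian normal subgroup of $\L_{p,\chi}$ (hence a $p$-group), and that the kernel of the action on $\Gamma(v)$ is also a $p$-group. It then passes to the subgroup $M$ of $N$ generated by elements of order $p$---an elementary abelian normal $p$-subgroup with $M_v\neq 1$---and applies Theorem~\ref{PrePXu} or~\ref{PraDigraph} according to $\chi$. Your argument reaches the same endpoint by instead taking the Sylow $p$-subgroup $P$ of $A$ and verifying $P_v\neq 1$: trivially when $\chi=1$ is handled directly by Theorem~\ref{PraDigraph}, immediately from $G_v$ being a $2$-group when $p=2$, and in the remaining case by the centre computation $|Z(\L_{p,2})|=p$ ruling out a normal involution. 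Both approaches are valid; the paper's avoids the case distinction on $p$ at the price of invoking (without much detail) the fact that abelian normal subgroups of $\L_{p,\chi}$ are $p$-groups, whereas your version makes precisely that fact explicit in the one non-obvious case.
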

\begin{proof}
Let $N$ be an abelian normal subgroup of $G$ which is not semiregular. Since $N_v\neq 1$, we have that $N_v^{\Gamma(v)}\neq 1$. In particular, $N_v^{\Gamma(v)}$ is a non-identity abelian normal subgroup of $\L_{p,\chi}$. By Lemma~\ref{lemma:blocks}, it follows that $N_v^{\Gamma(v)}$ is a non-identity $p$-group and hence so is $N_v$. Let $M$ be the group generated by the elements of order $p$ in $N$. This is an elementary abelian $p$-group which is characteristic in $N$ and hence normal in $G$. Moreover, $M_v\neq 1$. If $\chi=2$ then $\Gamma$ is $G$-arc-transitive and Theorem~\ref{PrePXu} concludes the proof. If $\chi=1$ then, by Lemma~\ref{lemma:graphtodigraph}, $\Gamma$ has a $G$-arc-transitive asymmetric orientation $\vGa$ of out-valency $p$. By Theorem~\ref{PraDigraph}, $\vGa\cong \vPX(p,r,s)$ for some $r\geq 3$ and $1\leq s\leq r-1$ and the result follows.
\end{proof}

Let $K_{n,n}$ denote the regular complete bipartite graph on $2n$ vertices.

\begin{theorem}[\rm{\cite[Theorem~$2.13$]{PraegerXu}}]\label{PXuAutomorphism}
Let $\Gamma=\PX(p,r,s)$ with $r\geq 3$ and $s\leq r-1$, let $X=\Sym(p)\wr \D_r$ and let $A=\Aut(\Gamma)$. Then one of the following occurs:
\begin{enumerate}
\item $A=X$;
\item $r=4$, $s=3$, $p=2$ and $|A:X|=2$;
\item $r=4$, $s=2$,  $p=2$ and $|A:X|=3$;
\item $r=4$, $s=1$, $\Gamma\cong K_{2p,2p}$ and $A=\Sym(2p)\wr\C_2$.
\end{enumerate}
\end{theorem}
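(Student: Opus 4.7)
The plan is to use the normal $p$-structure of $X$ and propagate it to $A = \Aut(\Gamma)$. Let $P$ denote the Sylow $p$-subgroup of the base group $\Sym(p)^r \trianglelefteq X$, so $P \cong \C_p^r$ is elementary abelian and normal in $X$. From the definition of $\PX(p,r,s)$, I would first verify that the $P$-orbits on $\V(\Gamma)$ are parametrised by the $r$ starting positions around $\ZZ_r$ of the length-$(s-1)$ traversing paths which index the vertices of $\PX(p,r,s)$; these orbits have size $p^s$ and form a block system whose quotient graph is a cycle of length $r$. A vertex-stabiliser in $P$ is the product of the $\C_p$-factors at the $r-s$ coordinates not touched by the path, so $P$ is manifestly not semiregular (since $s \leq r-1$).

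The main step — and the principal obstacle — is to show that $P \trianglelefteq A$ outside the exceptional cases. I would attempt to characterise $P$ intrinsically inside $A$: for instance as the normal closure in $A$ of a distinguished Sylow $p$-subgroup of a vertex-stabiliser, or as the unique maximal abelian normal $p$-subgroup of $A$ of a specified order and non-semiregular type. The argument would proceed by analysing the local action on $\Gamma(v)$, then propagating outwards along $2$-arcs, using the fact that $P$ preserves the block system at every level of the iterated construction $\PX(p,r,s) \to \PX(p,r,s-1) \to \cdots \to \PX(p,r,1)$. The exceptional cases are precisely those where this rigid local structure degenerates and extra global symmetries emerge.

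Once $P \trianglelefteq A$ is established, $A$ permutes the $P$-orbits and so induces a group of automorphisms of the $r$-cycle $\Gamma/P$; hence $A/K \leq \D_r$, where $K$ is the kernel of the $A$-action on $P$-orbits. It then remains to bound $K$. Since $K$ fixes each $P$-orbit setwise and commutes with the induced cyclic shift modulo $P$, I would analyse $K$ recursively using the line-graph-like definition of $\PX(p,r,s)$ from $\PX(p,r,1)$, showing that $K$ coincides with the corresponding kernel inside $X$ and deducing $A = X$ in the generic case.

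Finally, I would dispose of the exceptional cases by direct inspection. For $s = 1$ and $r = 4$, opposite vertices of the defining $4$-cycle have identical neighbourhoods in $\PX(p,4,1)$, so the graph collapses to $K_{2p,2p}$, whose automorphism group is $\Sym(2p) \wr \C_2$. The two small cases with $p = 2$, $r = 4$, and $s \in \{2,3\}$ are finite check: one exhibits explicit extra automorphisms arising from characteristic-$2$ coincidences on a graph of bounded order, and counts cosets to confirm that $|A:X|$ equals $2$ and $3$ respectively. The conceptual difficulty lies entirely in the intrinsic characterisation of $P$ in the second step; everything else is propagation, quotienting and bookkeeping.
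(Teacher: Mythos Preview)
The paper does not prove this theorem at all: it is quoted verbatim as \cite[Theorem~2.13]{PraegerXu} and used as a black box. There is therefore nothing in the present paper to compare your argument against.

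As to the substance of your sketch: the overall architecture is the right one and is broadly in the spirit of Praeger and Xu's original argument --- isolate a normal abelian $p$-subgroup, pass to the cyclic quotient, and control the kernel. However, you have correctly identified, and then not resolved, the only genuinely hard step: showing that $P \trianglelefteq A$ (equivalently, that the $P$-orbit partition is $A$-invariant) outside the listed exceptions. Your proposed attack via ``normal closure of a distinguished Sylow $p$-subgroup of a vertex-stabiliser'' or ``unique maximal abelian normal $p$-subgroup'' is not a proof but a wish-list; in the exceptional cases these characterisations visibly fail, and you give no mechanism for detecting \emph{a priori} when they succeed. In Praeger--Xu the corresponding step is carried out by a delicate analysis of the local action and the girth, not by a one-line intrinsic description. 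Until that step is supplied, what you have is an outline of the shape of the proof rather than a proof.
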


\begin{corollary}\label{RealCorollary}
Assume Hypothesis~A. If $G$ contains an abelian normal subgroup that is not semiregular then $\Gamma\cong \PX(p,r,s)$ for some $r\geq 3$ and $1\leq s\leq r-2$. 
\end{corollary}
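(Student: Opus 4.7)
I would apply Corollary~\ref{PXu} first: it yields $\Gamma \cong \PX(p,r,s)$ for some $r \geq 3$ and $1 \leq s \leq r-1$, so it suffices to rule out $s = r-1$ by contradiction. Set $X = \Sym(p) \wr \D_r$. Under $s = r-1$, a short computation gives $|X_v| = 2(p!)^r/p^{r-1}$, so $|X_v|_p = p$ when $p$ is odd and $|X_v|_p = p^2 = 4$ when $p = 2$. On the other hand, Lemma~\ref{lemma:Gv*} provides a $p$-subgroup $G_v^* \leq G_v$ with $(G_v^*)^{\Gamma(v)} \cong \L_{p,1}$, so $|G_v^*| \geq p^2$.

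Next I would split using Theorem~\ref{PXuAutomorphism}: either $\Aut(\Gamma) = X$, or $(p,r,s) = (2,4,3)$, since the other sporadic cases of Theorem~\ref{PXuAutomorphism} have $s \in \{1,2\}$, neither equal to $r-1 = 3$. For $\Aut(\Gamma) = X$ and $p$ odd, the inequalities $p^2 \leq |G_v^*| \leq |X_v|_p = p$ are immediately inconsistent. For $\Aut(\Gamma) = X$ and $p = 2$, instead $G_v^* = X_v$; I would then compute $X_v^{\Gamma(v)}$ directly, using the natural description of $X = \C_2 \wr \D_r$ acting on $\V(\Gamma)$, and show that the two generators of $X_v$ (the column-swap in the unique column missing from the vertex $v$, and a reflection through that column) both act on $\Gamma(v)$ as fixed-point-free involutions, so that $X_v^{\Gamma(v)}$ is the Klein four-group acting regularly. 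This is transitive, whereas $\L_{2,1}$ is intransitive with two orbits of size $2$, and the resulting mismatch between abstract and permutation-isomorphism type contradicts Lemma~\ref{lemma:Gv*}.

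The sporadic case $(p,r,s) = (2,4,3)$ will be the main obstacle: here $|\Aut(\Gamma):X| = 2$ and $|\Aut(\Gamma)_v| = 8$, and if $\Aut(\Gamma)_v$ acted faithfully on $\Gamma(v)$ its image would be a Sylow $2$-subgroup of $\Sym(\Gamma(v)) = \Sym(4)$, namely $\D_4 \cong \L_{2,2}$. Then $G = \Aut(\Gamma)$ would yield a locally-$\L_{2,2}$ pair carrying an abelian non-semiregular normal subgroup (for instance the base group $\C_2^4$ of $X$), directly contradicting the desired conclusion. To rule this out I would use an explicit description of the extra automorphism of $\PX(2,4,3)$ coming from the proof of Theorem~\ref{PXuAutomorphism}, showing that a representative of the non-trivial coset of $X_v$ in $\Aut(\Gamma)_v$ can be chosen to fix $v$ and each of its four neighbours; this places it in the kernel of $\Aut(\Gamma)_v \to \Sym(\Gamma(v))$. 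Consequently $\Aut(\Gamma)_v^{\Gamma(v)} = X_v^{\Gamma(v)}$, and the argument of the previous paragraph, applied to the arbitrary $G \leq \Aut(\Gamma)$ satisfying Hypothesis~A, yields the required contradiction.
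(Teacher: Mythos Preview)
Your treatment of the generic case $X=\Aut(\Gamma)$ is essentially the paper's argument, stated a bit more explicitly; that part is fine.

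The sporadic case $(p,r,s)=(2,4,3)$, however, does not go through as you describe. Your plan is to show that the extra automorphism can be taken to fix $v$ and all of $\Gamma(v)$, so that $A_v^{[1]}\neq 1$ and $A_v^{\Gamma(v)}$ collapses to the regular Klein four-group. This is false: $A_v$ acts faithfully on $\Gamma(v)$. One can see this by a short direct computation. Writing vertices of $\PX(2,4,3)$ as pairs (excluded column, values on the remaining three columns) and taking $v=(0;0,0,0)$ with neighbours $w_1,\dots,w_4$, each $w_i$ has exactly one neighbour in the second neighbourhood of $v$ that is not adjacent to any other $w_j$, while the remaining neighbours of $w_i$ are each the unique common neighbour (other than $v$) of a specific pair $\{w_i,w_j\}$. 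Hence any automorphism fixing $v$ and $\Gamma(v)$ pointwise fixes the entire second neighbourhood of $v$, and by connectedness is the identity. Consequently $|A_v^{\Gamma(v)}|=8$ and $A_v^{\Gamma(v)}\cong \D_4=\L_{2,2}$, not the transitive Klein four-group.

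This also explains why your parenthetical ``for instance the base group $\C_2^4$ of $X$'' does not furnish a counterexample: that base group is normal in $X$ but \emph{not} in $A$, precisely because the extra automorphism fails to normalise it. The paper handles this case differently: it observes that for $\chi=2$ one must have $G=A$, and for $\chi=1$ there is a unique index-$2$ subgroup of $A$ that is vertex- and edge- but not arc-transitive, and then verifies (by a short computer check) that every abelian normal subgroup of each of these two groups is semiregular, contradicting the hypothesis of the corollary. You will need an argument of this kind to close the gap.
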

\begin{proof}
By Corollary~\ref{PXu},  $\Gamma\cong \PX(p,r,s)$ for some $r\geq 3$ and $1\leq s\leq r-1$. We thus assume that $s=r-1$ and we will obtain a contradiction. Let $A=\Aut(\Gamma)$, let $Y=\Sym(p)^r$ and let $X=Y\rtimes \D_r$.

We first assume that $X=A$ and hence $A_v=Y_v\rtimes \C_2$. Note that a Sylow $p$-subgroup of $Y$ has order $p^r$ while the orbits of $Y$ have size $p^s$. It follows that a Sylow $p$-subgroup of $Y_v$ has order $p^{r-s}=p$. Since $p^2$ divides $|G_v^*|$ and thus $|A_v|$, it follows that $p=2$, $|A_v|=4$, $G_v^*=A_v$ and $\chi=1$. On the other hand, $(G_v^*)^{\Gamma(v)}$ is intransitive while $A_v^{\Gamma(v)}$ is transitive, a contradiction.

Suppose now that $X<A$. By Theorem~\ref{PXuAutomorphism}, we have $r=4$ and, since $s=r-1$, we have also $s=3$, $p=2$ and $\Gamma\cong\PX(2,4,3)$. We show that every abelian normal subgroup of $G$ is semiregular. By Theorem~\ref{PXuAutomorphism}, $|A_v|=2|X_v|=8$. In particular, if $\chi=2$ then $G_v=A_v$ and (since $G$ is vertex-transitive) $G=A$. A computer-assisted approach (using \texttt{magma}~\cite{magma} for example) can be used to check that every abelian normal subgroup of $A$ is semiregular. If $\chi=1$ then $G$ is not arc-transitive and, since $G$ is vertex-transitive and $|G_v|\geq 4$, it follows that $|A:G|=2$. Again, using a computer-assisted approach reveals that $A$ has a unique subgroup of index $2$ that is vertex- and edge-transitive but not arc-transitive. It is then straightforward to check that every abelian normal subgroup of this group is semiregular.
\end{proof}

\begin{corollary}\label{cor:PXuAutomorphism}
Assume Hypothesis~A. If $\Gamma\cong\PX(p,r,s)$ for some $r\geq 3$ and $1\leq s\leq r-2$ then $G$ is conjugate to a subgroup of $\Sym(p)\wr \D_r$. 
\end{corollary}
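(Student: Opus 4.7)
The plan is to apply Theorem~\ref{PXuAutomorphism} to describe $A := \Aut(\Gamma)$. If $A = X$, the conclusion is immediate; Case~(2), with $(r,s,p) = (4,3,2)$, has $s = r-1$ and is excluded by the hypothesis $s \leq r-2$. What remains is Case~(3) (the single graph $\PX(2,4,2)$ on $16$ vertices with $|A:X|=3$) and Case~(4) ($\Gamma \cong K_{2p,2p}$, $A \cong \Sym(2p) \wr \Sym(2)$). Case~(3) can be dispatched by a direct computer enumeration of the vertex- and edge-transitive subgroups of $A$ satisfying Hypothesis~A, exactly in the spirit of the last paragraph of the proof of Corollary~\ref{RealCorollary}.

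For Case~(4), write $V(\Gamma) = L \sqcup R$ and set $G^+ := G \cap (\Sym(L) \times \Sym(R))$; since $G$ is vertex-transitive on the bipartite graph $\Gamma$, we have $[G:G^+] = 2$. Let $\pi_R : G^+ \to \Sym(R)$ be projection. Then $\pi_R(G^+)$ is transitive on $R$ and $\pi_R(G_v) = G_v^{\Gamma(v)} \cong \L_{p,\chi}$ for every $v \in L$. The crux is to prove that $\pi_R(G^+) \leq \Sym(p) \wr \Sym(2)$, i.e.\ that $\pi_R(G^+)$ preserves the unique $\L_{p,\chi}$-invariant partition $\{B_1,B_2\}$ of $R$ into two blocks of size $p$. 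Indeed, if $\pi_R(G^+)$ is imprimitive, then the orbit sizes of $\L_{p,\chi}$ force any non-trivial block system to consist of blocks of size $p$, which then coincide with $\{B_1,B_2\}$. If $\pi_R(G^+)$ is primitive and $p \geq 3$, the element of $\L_{p,\chi}$ given by a single $p$-cycle on one $\L_{p,1}$-orbit yields a $p$-cycle in $\Sym(R)$ fixing $p \geq 3$ points, so Jordan's theorem forces $\pi_R(G^+) \supseteq \Alt(2p)$; symmetry gives $\pi_L(G^+) \supseteq \Alt(2p)$, and a routine Goursat-lemma enumeration of the resulting possibilities for $G^+ \leq \Sym(L) \times \Sym(R)$ shows that $\pi_R(G_v)$ must then have order at least $(2p-1)!/2$, contradicting $|\L_{p,\chi}| = \chi p^2$. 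The remaining case $p = 2$ is handled analogously by explicit inspection of the transitive subgroups of $\Sym(4)$ containing $\L_{2,\chi}$, ruling out $\pi_R(G^+) = \Sym(4)$ by a Goursat analysis that exploits the extra normal subgroup $\V_4 \lhd \Sym(4)$.

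Once $\pi_L(G^+)$ and $\pi_R(G^+)$ both preserve two-block partitions, $G^+$ preserves a four-part refinement of the bipartition of $V(\Gamma)$. Any element of $G \setminus G^+$ conjugates $\pi_L(G^+)$ to $\pi_R(G^+)$ and, by the uniqueness of the $\L_{p,\chi}$-invariant partitions, must exchange the pair of $L$-blocks with the pair of $R$-blocks. Hence $G$ preserves the four-part partition and thus lies in its stabiliser in $A$, which is precisely a conjugate of $X = \Sym(p) \wr \D_4$. The principal obstacle is the primitive subcase in Case~(4): the combination of Jordan's theorem and Goursat's lemma yields a uniform argument for $p \geq 3$, whereas $p = 2$ demands a short but separate analysis of the subgroup structure of $\Sym(4)$.
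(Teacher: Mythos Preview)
Your argument is correct in outline and the pieces fit together (Lemma~\ref{lemma:blocks} indeed forces any nontrivial block of $\pi_R(G^+)$ to have size $p$, Jordan's theorem applies for $p\geq 3$, and the Goursat enumeration against $\Alt(2p)$ or $\Sym(2p)$ goes through since the diagonal case still gives $|\pi_R(G_v)|=(2p-1)!/2>\chi p^2$). However, the route is quite different from the paper's, and rather heavier.

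For $p=2$ the paper dispatches both exceptional graphs $\PX(2,4,2)$ and $K_{4,4}$ in one line: since $|\V(\Gamma)|$ and $|G_v|$ are powers of $2$, so is $|G|$, and by Theorem~\ref{PXuAutomorphism} $X$ is a Sylow $2$-subgroup of $A$; hence $G$ is conjugate into $X$. This replaces your computer check for $\PX(2,4,2)$ and your bespoke Goursat/$V_4$ analysis for $K_{4,4}$ by pure Sylow theory.

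For $p$ odd and $\Gamma\cong K_{2p,2p}$, the paper again avoids Jordan and Goursat entirely. It takes a Sylow $p$-subgroup $P$ of $G$, observes it has four orbits of size $p$, and shows $G$ has a normal subgroup $N\leq P$ with the same four orbits (either $P$ itself, or, in the single recalcitrant case $p=3$, $\chi=2$, the core of $P$ in $G^+$). Then $G\leq \norm A N\leq \Sym(p)\wr D_4$ immediately. This is both shorter and more elementary than your primitivity dichotomy; in particular it never needs the classification of primitive groups containing a $p$-cycle, and it treats all odd $p$ uniformly without a separate small-prime analysis.
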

\begin{proof}
Let $X=\Sym(p)\wr \D_r$ and let $A=\Aut(\Gamma)$. If $A=X$ then the result is obvious. We may thus assume that $X<A$. By Theorem~\ref{PXuAutomorphism}, it follows that $r=4$ and $s\in\{1,2\}$. If $p=2$ then $|\V(\Gamma)|$ and $|G_v|$ are both powers of $2$ and hence so is $|G|$. Moreover, it follows from Theorem~\ref{PXuAutomorphism} that $X$ is a Sylow $2$-subgroup of $A$ and hence $G$ is conjugate to a subgroup of $X$.

We thus assume that $p$ is odd and hence $s=1$, $\Gamma\cong K_{2p,2p}$ and $A=\Sym(2p)\wr\C_2$. Let $P$ be a Sylow $p$-subgroup of $G$. Since $\Gamma$ is $G$-vertex-transitive, $P$ has four orbits of size $p$.  We claim that $G$ contains a normal subgroup $N$ contained in $P$ and having the same orbits as $P$. Note that the result immediately follows from this claim because $G\leq \norm A N\leq \Sym(p)\wr \D_4$.

If $P$ is normal in $G$ then the claim is clearly true, thus we assume that $P$ is not normal in $G$. Let $G^+$ be the index $2$ subgroup of $G$ preserving the bipartition of $\Gamma$. Note that $P$ is a Sylow $p$-subgroup of $G^+$. If $P$ is normal in $G^+$ then it is in fact characteristic in $G^+$ and thus normal in $G$. We may thus assume that $P$ is not normal in $G^+$.

Note that $G_v\leq G^+$. Since $p$ is odd, $G_v^*$ is the unique Sylow $p$-subgroup of $G_v$ and thus $G_v^*=P_v$. It follows that $|G^+:P_v|=|G^+:G_v||G_v:P_v|=(2p)\chi$ and thus  $|G^+:P|=2\chi$. Since $P$ is not normal in $G^+$, Sylow's theorems imply that $p=3$, $\chi=2$ and $G^+$ has exactly four Sylow $3$-subgroups. Let $N$ be the core of $P$ in $G^+$. Note that, since $G^+$ has four Sylow $3$-subgroups, $|G^+:N|$ divides $4!=24$ and hence, as $P$ is a $3$-group, $|P:N|=3$. Moreover, $N$ is a characteristic subgroup of $G^+$ and thus a normal subgroup of $G$. It remains to show that $N$ has the same orbits as $P$. Since $P$ has four orbits of size $3$, the only other possibility is that $N$ fixes a point. Since it is normal in the vertex-transitive group $G$, this would imply that $N=1$ and $|P|=3$, contradicting the fact that $P$ contains the subgroup $G_v^*$ of order at least $9$.
\end{proof}

\begin{lemma}\label{lemma:againC}
Assume Hypothesis~A. Assume that $\Gamma\cong\PX(p,r,s)$ for some $r\geq 3$ and $1\leq s\leq r-2$. Let $E$ be the normal closure of $G_v^*$ in $G$. Then $E$ is an elementary abelian $p$-group and $\Gamma/E$ is a cycle of length at least $3$ on which $G/E$ acts faithfully. Moreover, $E_v=G_v^*$.
\end{lemma}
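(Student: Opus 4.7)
The plan is to realise $G$ as a subgroup of $X := \Sym(p)\wr\D_r$ (permitted by Corollary~\ref{cor:PXuAutomorphism}), let $Y = \Sym(p)^r$ denote the base group of $X$ and set $K := G\cap Y$, the kernel of the action of $G$ on the $r$ ``columns'' (the $Y$-orbits on $\V(\Gamma)$). Since $Y\unlhd G$ and $\Gamma/Y\cong C_r$, the key goal will be to identify $E$ with $K$; this single equality simultaneously gives elementary abelianness of $E$, the cycle quotient $\Gamma/E\cong C_r$, faithfulness of the $G/E$-action (as $G/K$ embeds in $\D_r$), and $E_v=G_v^*$.

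First I would show $G_v^*\leq Y$. The group $X_v^{\Gamma(v)}$ is $\Sym(p)\wr\C_2$, whose unique non-trivial block system has the two ``end-extensions'' of the traversing path $v$ as its blocks. I would argue that any subgroup of $\Sym(p)\wr\C_2$ with two orbits of length $p$ on the $2p$ points preserves these blocks: for odd $p$ this follows from a parity argument (a length-$p$ orbit cannot straddle both blocks, as an element swapping the blocks would force an even intersection); the $p=2$ case is handled by a direct check that the unique intransitive Klein four-subgroup of $\Sym(2)\wr\C_2$ preserves the blocks. Since $(G_v^*)^{\Gamma(v)}\cong\L_{p,1}$ has two orbits of length $p$, it preserves the blocks; as every element of $X_v\setminus Y_v$ swaps them, $G_v^*\leq Y_v$ and in particular $E\leq Y$. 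Next, an analysis of arc-reversing elements (which correspond to lifts of reflections in $\D_r$) will show that $G/K=\C_r$ when $\chi=1$ and $G/K=\D_r$ when $\chi=2$. In both cases the identity $|G|=rp^s\chi|G_v^*|$ together with $|G/K|=r\chi$ gives $|K|=p^s|G_v^*|$, and $K_v=G_v\cap Y=G_v^*$, so $K$ is a $p$-subgroup of $Y$. Since every $p$-subgroup of $\Sym(p)^r$ is contained in a Sylow $p$-subgroup isomorphic to $\C_p^r$, the group $K$---and hence its subgroup $E$---is elementary abelian.

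The main task is then to establish $E=K$. I would view $K$ as a subspace of $\C_p^r$ with coordinates indexed by columns, and for each $i\in\ZZ_r$ set $K^{[i]}:=\ker(K\to\C_p^{\{i,i+1,\ldots,i+s-1\}})$; since $K$ is abelian and transitive on each $Y$-orbit $V_i$, one has $K^{[i]}=G_{v'}^*$ for every $v'\in V_i$, so $E=\sum_{i=0}^{r-1}K^{[i]}$. Picking a lift $\tilde\rho\in G$ of a generator of the rotation subgroup of $G/K$, I would note that for each $i\in\{1,\ldots,s\}$ the column at position $i-1$ in $v$'s window of visited columns is an \emph{extension} column of the path $v^{\tilde\rho^i}$, so the projection of $K^{[i]}=G_{v^{\tilde\rho^i}}^*$ onto that coordinate is the full $\C_p$. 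The images of $K^{[1]},\ldots,K^{[s]}$ in $K/K^{[0]}\cong\C_p^s$ will then span $\C_p^s$ triangularly, forcing $E$ to act transitively on the $Y$-orbit of $v$. Combined with $G_v^*\leq E_v\leq K_v=G_v^*$ this gives $|E|=p^s|G_v^*|=|K|$, hence $E=K$. The hard part will be this triangular-spanning argument, which requires carefully tracking which visited column each conjugate $G_{v^{\tilde\rho^i}}^*$ guarantees a full $\C_p$-projection onto.
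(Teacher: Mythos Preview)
Your approach is correct and establishes the lemma, but it takes a different route from the paper's proof. The paper also embeds $G$ in $X=\Sym(p)\wr\D_r$, shows $G_v^*\leq Y$ (using Lemma~\ref{lemma:blocks} rather than your parity argument), and deduces that $G\cap Y$---and hence $E$---is an elementary abelian $p$-group. From there, however, the paper does \emph{not} attempt to prove $E=G\cap Y$. Instead it observes that $E_v\neq 1$ (so $\Gamma$ is not a regular cover of $\Gamma/E$) while $E\leq Y$ forces $E$ to have at least $r\geq 3$ orbits, whence Lemma~\ref{lemma:easy} gives that $\Gamma/E$ is a cycle. Faithfulness of the $G/E$-action and $E_v=G_v^*$ then fall out of Lemma~\ref{lemma:cyclecycle}: if $K$ is the kernel of the action on $E$-orbits then $K=EK_v$ with $K_v=G_v^*\leq E$, so $K=E$. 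Your triangular-spanning argument in $\C_p^r$ works and in fact proves the sharper statement $E=G\cap Y$ (so $\Gamma/E\cong C_r$ on the nose), but it is considerably more labour than invoking the two lemmas already in hand. One small wrinkle: for $p=2$ your assertion that ``two orbits of length $p$ forces block-preservation'' fails for the order-$2$ subgroup $\langle(13)(24)\rangle$ of $\D_4$; you correctly repair this by restricting to Klein-four subgroups, but the paper's appeal to Lemma~\ref{lemma:blocks} avoids the case split altogether.
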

\begin{proof}

Let $Y=\Sym(p)^r$ and let $X=Y\rtimes \D_r$. By Corollary~\ref{cor:PXuAutomorphism}, we may assume that $G\leq X$. Since $s\leq r-2$, $Y_v^{\Gamma(v)}$ has two orbits. Note that the two orbits of $Y_v^{\Gamma(v)}$ form a system of imprimitivity for $X_v^{\Gamma(v)}$ and hence for $G_v^{\Gamma(v)}\cong\L_{p,\chi}$. By Lemma~\ref{lemma:blocks}, it follows that the orbits of $Y_v^{\Gamma(v)}$ are exactly the orbits of $(G_v^*)^{\Gamma(v)}$. Since $|X_v:Y_v|=2$, every element of $X_v \setminus Y_v$ interchanges the orbits of $Y_v^{\Gamma(v)}$; hence $G_v^*\leq Y_v$ and $G_v^*\leq (G\cap Y)_v\leq G_v$. As $(G\cap Y)_v^{\Gamma(v)}$ is intransitive, it follows by Lemma~\ref{lemma:Gv*} that $(G\cap Y)_v=G_v^*$. In particular, $(G\cap Y)_v$ is a $p$-group.

As $Y$ is normal in $X$, $G\cap Y$ is normal in $G$ and thus $E\leq G\cap Y$. Note that the orbits of $Y$ have size a power of $p$, hence the orbits of $G\cap Y$ also have size a power of $p$. We have already seen that $(G\cap Y)_v$ is a $p$-group therefore so is $G\cap Y$. Note that a Sylow $p$-subgroup of $Y$ is elementary abelian and hence so are $G\cap Y$ and $E$.

Since $1<G_v^*\leq E_v$, it follows from Lemma~\ref{lemma:quotients} that $\Gamma$ is not a regular cover of $\Gamma/E$. Moreover, $E\leq Y$ and hence $E$ has at least $r\geq 3$ orbits. By Lemma~\ref{lemma:easy}, we conclude that $\Gamma/E$ is a cycle of length at least $3$.

Let $K$ be the kernel of the action of $G$ on $E$-orbits. Note that $K=EK_v$. By Lemma~\ref{lemma:cyclecycle}, $K_v=G_v^*\leq E$; hence $E=K$ and $E_v=G_v^*$ which concludes the proof.
\end{proof}

\section{$\Gamma/N\cong\PX(p,r,s)$}\label{sec:nc}

In this section, we consider a rather specific but important case towards the proof of Theorem~\ref{theorem:main2}. We assume Hypothesis~A and, moreover, we assume that $G$ has a normal subgroup $N$ with $\Gamma/N\cong\PX(p,r,s)$. We show that $(\Gamma,G)$ satisfies either Theorem~\ref{theorem:main2}~(\ref{labelmain1}) or~(\ref{mainbound}). Recall that $\Op p G$ denotes the largest normal $p$-subgroup of $G$.

\begin{theorem}\label{Theorem:PreWreathCover}
Assume Hypothesis~A. Suppose that $\Op p G =1$ and that $G$ has an elementary abelian minimal normal $q$-subgroup $N$. Assume that $\Gamma/N\cong \PX(p,r,s)$ for some $r\geq 3$ and $1\leq s\leq r-2$. Then $|\V(\Gamma)|>2p|G_v^*|\log_p|G_v^*|$.
\end{theorem}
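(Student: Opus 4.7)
My plan is to pass to the quotient $\Gamma/N$ and exploit its rigid structure as a Praeger--Xu graph. Since $\Gamma/N\cong\PX(p,r,s)$ has $rp^s\geq 6$ vertices and valency $2p\geq 4$, it is neither a graph on at most two vertices nor a cycle, so Lemma~\ref{lemma:easy} forces $\Gamma$ to be a regular cover of $\Gamma/N$. Then Corollary~\ref{lemma:cover} ensures Hypothesis~A holds for $(\Gamma/N,G/N)$, and Corollary~\ref{cor:PXuAutomorphism} allows me to assume (after conjugation) that $G/N\leq X:=\Sym(p)\wr \D_r$. I also record that $p\neq q$, since otherwise $N\leq \Op p G=1$, a contradiction.

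Next, I would locate a large elementary abelian normal $p$-subgroup inside $G/N$. Set $Y:=\Sym(p)^r\trianglelefteq X$ and let $F$ be the preimage in $G$ of $(G/N)\cap Y$. Arguing as in the proof of Lemma~\ref{lemma:againC}, $F/N$ is a $p$-subgroup of $Y$ and hence elementary abelian (since a Sylow $p$-subgroup of $\Sym(p)^r$ is $\C_p^r$). The quotient $(G/N)/(F/N)$ acts faithfully on the $r$-cycle arising from the $Y$-orbits on $\V(\Gamma/N)$, and since $G/N$ is vertex- and edge-transitive on $\Gamma/N$, this image is a vertex- and edge-transitive subgroup of $\D_r$. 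When $\chi=1$, the image cannot be arc-transitive on the $r$-cycle (otherwise $G/N$ would be arc-transitive on $\Gamma/N$) and must therefore equal $\C_r$; when $\chi=2$, arc-transitivity is inherited by the quotient, forcing the image to be $\D_r$. Thus $|(G/N):(F/N)|=\chi r$, and combined with $|G/N|=\chi|G_v^*|\cdot rp^s$ this gives the exact count
\[
|F/N|=\frac{\chi|G_v^*|\,rp^s}{\chi r}=|G_v^*|\,p^s=p^{u+s},\qquad u:=\log_p|G_v^*|.
\]

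Now I would show that $F/N$ acts faithfully on $N$ by conjugation. Let $C=C_F(N)$, which is normal in $G$ as an intersection of normal subgroups; since $N\leq \Zent C$ and $C/N$ is a $p$-group coprime to $|N|$, Schur--Zassenhaus yields $C=N\times P$ with $P$ the unique Sylow $p$-subgroup of $C$ (uniquely determined as the set of $p$-elements of $C$). As $P$ is characteristic in $C$ and hence normal in $G$, the hypothesis $\Op p G=1$ forces $P=1$, so $C=N$. Thus $F/N$ embeds into $\Aut(N)\cong\GL_n(q)$ as an elementary abelian $p$-subgroup of order $p^{u+s}$, where $n:=\log_q|N|$. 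Lemma~\ref{PabloLemma} then gives $|N|\geq(p+1)^{u+s}$, and since $\Gamma$ is a regular cover of $\Gamma/N$,
\[
|\V(\Gamma)|=rp^s|N|\geq rp^s(p+1)^{u+s}\geq 3p(p+1)^{u+1};
\]
applying Lemma~\ref{lemma:stupidtechnical} with $t=u$ gives $(p+1)^{u+1}>(p+u+1)p^u$, and using $3(p+u+1)>2u$ (equivalently $3p+u+3>0$) yields $|\V(\Gamma)|>2up^{u+1}=2p|G_v^*|\log_p|G_v^*|$. I expect the main obstacle to be pinning down $|F/N|$ exactly as $p^{u+s}$: the weaker bound $|F/N|\geq|G_v^*|$ coming from $G_v^*\leq F$ alone is too loose to close the inequality, so the argument really needs the $\chi=1$ versus $\chi=2$ dichotomy on the quotient $r$-cycle to extract the decisive extra factor $p^s$.
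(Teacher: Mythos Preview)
Your argument is correct and follows the same overall strategy as the paper: pass to the quotient, identify a normal elementary abelian $p$-subgroup of $G/N$ sitting above $N$, show it acts faithfully on $N$ via $\Op p G =1$, and then apply Lemma~\ref{PabloLemma} and Lemma~\ref{lemma:stupidtechnical}. The one substantive difference is in how you size this $p$-subgroup. You work with $F/N=(G/N)\cap Y$ and pin down $|F/N|=p^{u+s}$ exactly, using the $\chi$-dichotomy on the image in $\D_r$; the paper instead invokes Lemma~\ref{lemma:againC} to get $E/N$ (the normal closure of $(G/N)_{v^N}^*$) and observes only that $E/N$ has a non-trivial vertex orbit, hence $|E/N|\geq p\cdot|(E/N)_{v^N}|=p^{t+1}$. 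Your closing worry is therefore misplaced: the decisive extra factor of $p$ does \emph{not} require the $\chi$-dichotomy or the full $p^{u+s}$ --- it comes for free from the fact that this normal $p$-subgroup acts non-trivially on $\V(\Gamma/N)$, which you already know since it contains the non-trivial stabiliser $(G/N)_{v^N}^*$. Indeed, in your own final chain of inequalities you immediately relax $(p+1)^{u+s}$ down to $(p+1)^{u+1}$, so the exact computation of $|F/N|$ is never actually used. Your argument that the image in $\D_r$ is $\C_r$ when $\chi=1$ is correct (a reflection would reverse the forward/backward neighbour partition and force arc-transitivity), but the paper sidesteps this altogether.
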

\begin{proof}
It follows from Lemma~\ref{lemma:easy} that $\Gamma$ is a regular cover of $\Gamma/N$. By Corollary~\ref{lemma:cover}, Hypothesis~A holds with $(\Gamma,G)$ replaced by $(\Gamma/N,G/N)$. It then follows from Lemma~\ref{lemma:againC} that $G/N$ contains a normal $p$-subgroup $E/N$ such that  $(\Gamma/N)/(E/N)$ is a cycle of length at least $3$ on which $(G/N)/(E/N)$ acts faithfully. Note that $\Gamma/E\cong(\Gamma/N)/(E/N)$ and $G/E\cong(G/N)/(E/N)$. By Lemma~\ref{lemma:cyclecycle}, we have $G_v^*=E_v$. Let $|E_v|=p^t$. Observe that $q\neq p$ because $\Op p G = 1$.

Since $\Gamma$ is a regular cover of $\Gamma/N$, Lemma~\ref{lemma:quotients} (\ref{a9}) implies that $|(E/N)_{v^N}|=|E_v|=p^t$. As $|\V(\Gamma/N)|>|\V((\Gamma/N)/(E/N))|$, it follows that $|(v^N)^{E/N}|> 1$ and, since $E/N$ is a $p$-group, $|(v^N)^{E/N}|\geq p$. This implies that $|E/N|=|(v^N)^{E/N}||(E/N)_{v^N}|\geq p^{t+1}$.

Let $C$ be the centraliser of $N$ in $E$. Since $N$ and $E$ are normal in $G$, so is $C$. Let $K$ be a Sylow $p$-subgroup of $C$. Since $E/N$ is a $p$-group, we have $C=NK$. As $K$ centralises $N$, it follows that $K$ is normal and thus characteristic in $C$. In particular, $K$ is normal in $G$. Since $\Op p G =1$, we get $K=1$ and $C=N$. In particular, $E/N$ acts faithfully on $N$ by conjugation. As $|E/N|\geq p^{t+1}$, Lemma~\ref{PabloLemma} implies that $|N|\geq (p+1)^{t+1}$. By Lemma~\ref{lemma:stupidtechnical}, we have
$$|\V(\Gamma)|=|\V(\Gamma/N)||N|=rp^s|N|\geq (3p)(p+1)^{t+1}>(3p)(p+t+1)p^t>2tp^{t+1}=2p|G_v^*|\log_p|G_v^*|.$$ 
\end{proof}

If so minded, one could remove the hypothesis that $N$ is abelian from Theorem~\ref{Theorem:PreWreathCover} using Lemma~\ref{neandertal1}.

\begin{theorem}\label{Theorem:WreathCover}
Assume Hypothesis~A. If $G$ has an elementary abelian minimal normal $p$-subgroup $N$ such that $\Gamma/N\cong \PX(p,r,s)$ for some $r\geq 3$ and $1\leq s\leq r-2$ then one of the following holds:
\begin{enumerate}
\item $\Gamma\cong \PX(p,r',s')$ for some $r'\geq 3$ and $1\leq s'\leq r'-2$;\label{jaja}
\item $|\V(\Gamma)|\geq 2p|G_v^*|\log_p|G_v^*|$.\label{jajajaja}
\end{enumerate}

\end{theorem}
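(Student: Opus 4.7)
The plan is to show that if~(\ref{jaja}) fails then~(\ref{jajajaja}) must hold, mirroring the strategy of Theorem~\ref{Theorem:PreWreathCover} but with a genuinely new obstacle caused by the fact that $N$ is a $p$-group. Suppose $\Gamma\not\cong\PX(p,r',s')$ for any admissible $(r',s')$. By Corollary~\ref{RealCorollary} every abelian normal subgroup of $G$ is semiregular; in particular $N$ is semiregular. Since $\Gamma/N\cong\PX(p,r,s)$ has valency $2p\geq 4$ and at least $3p$ vertices, Lemma~\ref{lemma:easy} forces $\Gamma$ to be a regular cover of $\Gamma/N$, so Corollary~\ref{lemma:cover} transfers Hypothesis~A to $(\Gamma/N,G/N)$. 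Applying Lemma~\ref{lemma:againC} to $(\Gamma/N,G/N)$ produces a normal elementary abelian $p$-subgroup $E/N\leq G/N$ with $(\Gamma/N)/(E/N)$ a cycle and $(E/N)_{v^N}=(G/N)_{v^N}^*$. Writing $|G_v^*|=p^t$, Lemma~\ref{lemma:quotients} identifies $E_v=G_v^*$ as an elementary abelian group of order $p^t$, and one checks that $|E/N|=p^{s+t}$ (the $E/N$-orbits on $\V(\Gamma/N)$ have size $p^s$).

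\emph{Structure of $E$.} Since $N$ is normal in the $p$-group $E$, the intersection $N\cap\Zent{E}$ is non-trivial and $G$-invariant, so minimality of $N$ forces $N\leq\Zent{E}$; hence $[E,E]\leq N$ and $E$ has class at most $2$. If $[E,E]=1$, then $E$ would be an abelian normal subgroup with $E_v=G_v^*\neq 1$, contradicting semiregularity; minimality of $N$ then yields $[E,E]=N$, and the commutator induces a $G$-equivariant surjective alternating bilinear form $c\colon E/N\times E/N\to N$. Moreover, the abelian normal subgroup $\Zent{E}$ is semiregular, so $\Zent{E}\cap E_v=1$; this places $\Zent{E}/N$ disjoint from the $t$-dimensional $\bar{E_v}\leq E/N$, whence $\dim(\Zent{E}/N)\leq s$ and the induced non-degenerate $N$-valued alternating form on $E/\Zent{E}$ still contains $\bar{E_v}$ as a totally isotropic subspace of dimension $t$. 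The target inequality $|\V(\Gamma)|=rp^s|N|\geq 2tp^{t+1}$ would follow from $|N|\geq(p+1)^{t+1}$, via Lemma~\ref{lemma:stupidtechnical} exactly as at the end of Theorem~\ref{Theorem:PreWreathCover}.

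\emph{Main obstacle.} Deriving that lower bound on $|N|$ is the essential difficulty. In Theorem~\ref{Theorem:PreWreathCover} the assumption $\Op{p}{G}=1$ forced the elementary abelian $p$-group $E/N$ to act faithfully on the $q$-group $N$, and Lemma~\ref{PabloLemma} immediately gave $|N|\geq(p+1)^{t+1}$. Here that route is closed: because $N\leq\Zent{E}$, the group $E/N$ centralises $N$, and the faithful action of $G/\cent{G}{N}$ on $N$ factors through a subquotient of the (sub)dihedral group $G/E\hookrightarrow\Aut(\Gamma/E)$, which need not be large. The intended substitute is to exploit the form $c$ together with the $G$-irreducibility of $N$: the $G$-translates $\bar{E_v}^g$ generate $E/N$, so their pairwise commutator images $[\bar{E_v},\bar{E_v}^g]\subseteq N$ generate $N$ as a $G$-module, and a rank argument, playing the totally isotropic constraint against this generation of $E/N$, should force $|N|$ to grow at least as $(p+1)^{t+1}$. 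A case division on the isomorphism type of $G/\cent{G}{N}$, combined with the modular representation theory of cyclic and dihedral groups of order dividing $2r$ over $\mathbb{F}_p$, will serve as the fallback if the general argument proves too delicate.
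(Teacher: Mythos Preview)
Your setup and the structural analysis of $E$ are correct and in places tidier than the paper's: the observations that $N\leq\Zent{E}$, that $[E,E]\neq 1$ (else conclusion~(\ref{jaja}) holds via Corollary~\ref{RealCorollary}), and hence $[E,E]=N$ by minimality, are all valid. The fatal problem is the target you set yourself. The inequality $|N|\geq(p+1)^{t+1}$ is false in general, so no rank argument on the alternating form and no representation-theoretic fallback can establish it. Indeed, conclusion~(\ref{jajajaja}) allows equality, and in the equality case (see Remark~\ref{remark:equality}) the group $E$ is extraspecial with $\Zent{E}=[E,E]$ of order~$p$. Taking $N=\Zent{E}$ then satisfies all the hypotheses of the theorem: it is minimal normal in $G$, and $E/N$ is an abelian normal subgroup of $G/N$ with $(E/N)_{v^N}\neq 1$, so $\Gamma/N\cong\PX(p,r,s)$ by Corollary~\ref{RealCorollary}. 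Yet $|N|=p$, far below $(p+1)^{t+1}$. Worse, your proposed bound would force a \emph{strict} inequality in~(\ref{jajajaja}), contradicting the existence of these examples.

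The paper does not try to bound $|N|$ at all. After reducing to the case $E=(E_v)^G$, it runs a Tutte--Sims chain argument: pick $a\in G$ shifting one step along the cycle $\Gamma/E$ and a generator $x$ of the last-arc stabiliser inside $E_v$, set $x_i=x^{a^i}$ and $E_i=\langle x_0,\ldots,x_{i-1}\rangle$, so that $E_t=E_v$, and let $e$ be minimal with $E_{t+e}=E_{t+e+1}$. Three short claims give $E=E_{t+e}$, $|v^E|\geq p^e$, and $m\geq t+e$ (the last via Lemma~\ref{Pablolemma2}). The key step then uses semiregularity of $\Zent{E}$ to force $e\geq t$: since $E_v$ and $E_v^{a^{t-1}}$ are both abelian, $x_{t-1}$ centralises $E_{2t-1}$, so $E_{2t-1}\neq E$. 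If $e\geq t+1$ one is done immediately; in the boundary case $e=t$ an index computation using $\Zent{E}E_v^{a^t}\cap E_v=1$ yields $|E|\geq|\Zent{E}|p^{2t}\geq p^{2t+1}$ and hence $|\V(\Gamma)|=m|v^E|\geq 2t\,p^{t+1}$. The argument is combinatorial rather than module-theoretic, and it works precisely because it tracks the cycle length $m$ and the orbit size $|v^E|$ separately instead of funnelling everything through $|N|$.
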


\begin{proof}
It follows from Lemma~\ref{lemma:easy} that $\Gamma$ is a regular cover of $\Gamma/N$ and $N$ is semiregular. By Corollary~\ref{lemma:cover}, Hypothesis~A holds with $(\Gamma,G)$ replaced by $(\Gamma/N,G/N)$. It then follows from Lemma~\ref{lemma:againC} that $G/N$ contains an elementary abelian normal $p$-subgroup $E/N$ such that  $E/N$ is equal to the normal closure of $(E/N)_{v^N}=E_vN/N$ in $G/N$ and, moreover,  $(\Gamma/N)/(E/N)$ is a cycle of length $m\geq 3$ on which $(G/N)/(E/N)$ acts faithfully. Note that $\Gamma/E\cong(\Gamma/N)/(E/N)$,  $G/E\cong(G/N)/(E/N)$ and $|\V(\Gamma)|=m|v^E|$. As $N$ is semiregular, $ E_vN/N\cong E_v$ and, since $E/N$ is elementary abelian, so is $E_v$.

Fix an asymmetric orientation of the cycle $\Gamma/E$. This induces an asymmetric orientation of the graph $\Gamma$, yielding an asymmetric digraph $\vGa$ of in- and out-valency $p$, whose underlying graph is $\Gamma$. Let $G^+/E$ be the orientation-preserving subgroup of $G/E$. By Lemma~\ref{lemma:cyclecycle}, we have $G_v^*=G_v^+ =E_v$, $|G:G^+|=\chi$, $\vGa$ is $G^+$-half-arc-transitive, $(G^+_v)^{\vGa^+(v)}\cong\C_p$  and $G^+/E\cong\C_m$. 

Since $G_v^*= E_v$, we have $E_v\neq 1$. Let $F$ be the normal closure of $E_v$ in $G$. As $N$ is a minimal normal subgroup of $G$, we obtain that either $N\cap F=1$ or $N\leq F$. If $N\cap F=1$ then $F\cong FN/N\leq E/N$. It follows that $F$ is a normal elementary abelian $p$-subgroup of $G$. As $E_v\neq 1$, we have $F_v\neq 1$ and hence it follows from Corollary~\ref{RealCorollary} that~(\ref{jaja}) holds. We may therefore assume that $N\leq F$. As the normal closure of $E_vN$ in $G$ is $E$, we have $E=(E_vN)^G=(E_v)^GN=FN=F$, that is, $E=(E_v)^G$.

Let $t$ be the largest integer such that $E_v$ acts transitively on the $t$-arcs of $\vGa$ starting at $v$ and let $(v_0,\ldots,v_t)$ be such a $t$-arc (and thus $v=v_0$). Since $E_v$ is transitive on the $t$-arcs of $\vGa$ starting at $v$ and $G^+$ is vertex-transitive, $G^+$ is transitive on $t$-arcs of $\vGa$.

For $0\leq i\leq t$, let $E_i$ be the pointwise stabiliser of $\{v_0,...,v_{t-i}\}$.  If $(E_0)^{\vGa^+({v_t})}$ is transitive, then $E_v$ is transitive on the $(t+1)$-arcs starting at $v$, contradicting the maximality of $t$. It follows that $(E_0)^{\vGa^+({v_t})}$ is intransitive. On the other hand, $(E_0)^{\vGa^+(v_t)}\leq (G^+_{v_t})^{\vGa^+({v_t})}\cong\C_p$ and hence $(E_0)^{\vGa^+(v_t)}$ is trivial. In other words, the stabiliser of $(v_0,\ldots,v_t)$ fixes all the out-neighbours of $v_t$. Since $\vGa$ is strongly connected, it follows that $E_0=1$ and hence $|E_i|=p^i$ for $0\le i \le t$. In particular,
$|E_t|=|E_{v}|=p^t$.  As $E_v=G^*_v$, we have $t\geq 2$ and
\begin{equation}
2p|G_v^*|\log_p|G_v^*|=2tp^{t+1}.\label{eq11}
\end{equation}

Since $G^+$ is transitive on $t$-arcs of $\vGa$, there exists $a\in G^+$ such that $(v_0,\ldots,v_t)^a=(v_0^a,v_0,\ldots,v_{t-1})$, that is, $v_i = v_0^{a^{-i}}$ for $0\leq i\leq t$. As $a$ acts as a rotation of order $m$ on $\vGa/E$, we get $G^+=E\langle a\rangle$. Let $x$ be a generator of the cyclic group $E_1$. For any integer $i$, let $x_i=x^{a^i}$ and $v_i=v_0^{a^{-i}}$ (note that this definition of $v_i$ is consistent with the definition of $v_i$ that we had for $0\leq i\leq t$).  We now prove three claims from which the result will follow.

\smallskip

\noindent\textsc{Claim~1. }$E_i=\langle x_0,\ldots,x_{i-1}\rangle$ for $1\leq i\leq t$. 

\noindent We argue by induction on $i$. If $i=1$, then by definition, $x=x_0$ and $E_1=\langle x_0\rangle$. Assume $E_i=\langle x_0,\ldots,x_{i-1}\rangle$ for some $i$ with $1\leq i\leq t-1$. As $x$ fixes $\{v_0,\ldots,v_{t-1}\}$ pointwise  and $v_t^{x}\neq v_t$, the element $x_i=x^{a^i}$ fixes $\{v_0^{a^i},\ldots,v_{t-1}^{a^i}\}$ pointwise  and $(v_t^{a^i})^{x_i}\neq v_t^{a^i}$, that is, $x_i$ fixes $\{v_{-i},\ldots,v_{-i+t-1}\}$ pointwise and $v_{-i+t}^{x_i}\neq v_{-i+t}$. In particular, by definition of $E_{i+1}$, we get  $x_i\in E_{i+1}\setminus E_i$. As $|E_{i+1}:E_i|=p$, we obtain $E_{i+1}=E_i\langle x_i\rangle=\langle x_0,\ldots,x_i\rangle$, completing the induction.~$_\blacksquare$

\smallskip

For any positive integer $i\geq 1$, we define $E_i=\langle x_0,\ldots,x_{i-1}\rangle$. (Claim~$1$ shows that, for $1\leq i\leq t$, this definition is consistent with the original definition of $E_i$.) Note that, for any $i\geq 1$, $E_i\leq \langle E_i,E_i^a\rangle=E_{i+1}$. Since $E$ is finite, there exists a smallest $e\geq 0$ such that $E_{t+e}=E_{t+e+1}$. 
\smallskip

\noindent\textsc{Claim~$2$. }$E=E_{t+e}$. 

\noindent Since $E_{t+e}=E_{t+e+1}=\langle E_{t+e},E_{t+e}^a\rangle$, it follows that $E_{t+e}$ is normalised by $a$. Moreover, since $E_v=E_t$ and $\vGa$ is an asymmetric $G^+$-arc-transitive digraph and $a$ maps $v$ to an adjacent vertex, we have that $G^+=\langle G^+_v,a\rangle=\langle E_t,a\rangle$. It follows that $E_{t+e}$ is normalised by $G^+$. Therefore $E_{t+e}\geq (E_v)^{G^+}=\langle E_w\mid w\in \V(\Gamma)\rangle=(E_v)^{G}=E$. As $E$ is normal in $G$, we have $E_{t+e}\leq E$. ~$_\blacksquare$

\smallskip

Recall that $N$ and $E/N$  are both elementary abelian $p$-groups hence $E$ is a $p$-group. From the definition of $e$, we have $|E_{t+i}:E_{t+i-1}|\geq p$ for $1\leq i\leq e$ therefore $|E_{t+e}:E_t|\geq p^e$. In particular, Claim~$2$ gives

\begin{equation}\label{eq:Thm242}
|v^{E}|=|E:E_v|=|E_{t+e}:E_t|\geq p^e.
\end{equation} 

\noindent\textsc{Claim~$3$. }$m\geq t+e$.

\noindent Assume, by contradiction, that $m<t+e$. In particular,  $E=E_{t+e}=\langle x_0,\ldots,x_{t+e-m-1},\ldots,x_{t+e-1}\rangle$. Since $G^+/E$ is a cyclic group of order $m$ and $a\in G^+$, we get $a^m\in E$ but $x_{t+e-1}=x_{t+e-m-1}^{a^m}$ and hence, by Lemma~\ref{Pablolemma2}, we have $E_{t+e}=\langle x_0,\ldots, x_{t+e-2}\rangle=E_{t+e-1}$, contradicting  the minimality of $e$.~$_\blacksquare$

\smallskip

Let  $\Zent E$ be the centre of $E$. If $\Zent E$ is not semiregular then~(\ref{jaja}) follows from Corollary~\ref{RealCorollary}. Therefore we may assume that $\Zent E$ is semiregular. Recall that $E_v=E_t=\langle x_0,\ldots,x_{t-1}\rangle$ is abelian and hence $E_t^{a^{t-1}}=\langle x_{t-1},\ldots,x_{2t-2}\rangle$ is also abelian. Therefore $x_{t-1}$ is central in $\langle E_t,E_t^{a^{t-1}}\rangle=\langle x_0,\ldots,x_{2t-2}\rangle=E_{2t-1}$. Since $x_{t-1}\in E_v$ and $\Zent E\cap E_v=1$, we get $E_{2t-1}<E=E_{t+e}$ and hence $2t-1<t+e$ from which it follows that $e\geq t$. 

Assume $e\geq t+1$. From~$(\ref{eq:Thm242})$ and Claim~$3$, we have $|\V(\Gamma)|=m|v^E|\geq (t+e)p^e\geq (2t+1)p^{t+1}$ and~(\ref{jajajaja}) follows from~\eqref{eq11}. Therefore, from now on, we may assume that $e=t$ and, in particular,
\begin{equation}\label{eq:6}
E=E_{2t}=\langle x_0,\ldots,x_{t-1},x_t,\ldots,x_{2t-1}\rangle=\langle E_v,E_v^{a^t}\rangle.
\end{equation}

Let $X=\Zent E E_v^{a^t}\cap E_v$. Since $E_v$ and $E_v^{a^t}$ are abelian subgroups of $E$, we have $[X,E_v^{a^t}]\leq [\Zent E E_v^{a^t},E_v^{a^t}]=1$ and $[X,E_v]\leq [E_v,E_v]=1$. Hence, by~$(\ref{eq:6})$, we obtain $[X,E]=1$ and thus $X\leq \Zent E \cap E_v=1$. It follows that 
\begin{eqnarray*}
|\Zent E E_v^{a^t}E_v|&=&\frac{|\Zent E E_v^{a^t}||E_v|}{|X|}=|\Zent E E_v^{a^t}||E_v|\\
&=&\frac{|\Zent E ||E_v^{a^t}|}{|\Zent E \cap E_v^{a^t}|}p^t=|\Zent E ||E_v^{a^t}|p^t=|\Zent E|p^{2t}.\nonumber
\end{eqnarray*} 

In particular, $|E|\geq |\Zent E|p^{2t}\geq p^{2t+1}$ and hence 
$$|\V(\Gamma)|=m|v^E|=m|E:E_v|\geq 2t p^{t+1}=2p|G_v^*|\log_p|G_v^*|.$$
\end{proof}

\begin{remark}\label{remark:equality}
By going through the proofs of our main theorems in Section~\ref{sec:mainproofs}, one can check that the only way the inequality in Theorem~\ref{theorem:main2}~(\ref{mainbound}) can be met is if the inequality in  Theorem~\ref{Theorem:WreathCover}~(\ref{jajajaja}) is met. It is thus important to note that the proof of Theorem~\ref{Theorem:WreathCover} in fact gives a great deal of information about $(\Gamma,G)$ in this situation.

For example, $G$ contains a normal $p$-subgroup $E$ with $\Gamma/E$ a cycle of length $2t$ and $G/E$ isomorphic to either $\C_{2t}$ or $\D_{2t}$ for some $t\geq 2$, depending on whether $\chi=1$ or $\chi=2$. Moreover, $E/\Zent E$ is an elementary abelian $p$-group and $|\Zent E|=p$, and hence $E$ is an extraspecial $p$-group. 

A complete classification of all possible pairs $(\Gamma,G)$ meeting this bound in the case $(p,\chi)=(2,2)$ was obtained in~\cite{PSV4valent} and the pairs which arise are described in~\cite{PSVTetravalentExamples}. With some work, it is likely that a similar classification could be obtained for general $(p,\chi)$ along the same lines. However, we do not take this detour here. 
\end{remark}

\section{Proofs of the main theorems}\label{sec:mainproofs}

In this section, we prove Theorems~\ref{theorem:main2},~\ref{theorem:main1},~\ref{thm:mainbasic},~\ref{GarPraCycle} and Proposition~\ref{prop:explicit}. In light of Lemma~\ref{lemma:Gv*}, we write $|G_v^*|$ rather than $|G_v|/\chi$ in this section.

\begin{proof}[Proof of Theorem~$\ref{theorem:main1}$]
The proof goes by induction on $|\V(\Gamma)|$. 

If $G$ has a non-abelian minimal normal subgroup then, by Corollary~\ref{corollary:nonabelian},  $G$ contains a normal subgroup $N$ such that $\Gamma$ is a regular cover of $\Gamma/N$ and $G/N$ has a unique minimal normal subgroup and this subgroup is non-abelian. If $N=1$ then $(\ref{semisimple})$ holds, otherwise~(\ref{funny}) holds.

We may thus assume that every minimal normal subgroup of $G$ is abelian. Let $N$ be a minimal normal subgroup of $G$. Note that $N$ is elementary abelian. By choosing $N$ appropriately, we can ensure that either $N$ is a $p$-group or $\Op p G =1$.

We may also assume that (\ref{PXU}) does not hold. In particular, by Corollary~\ref{RealCorollary}, $N$ is semiregular. By Lemma~\ref{lemma:easy}~(\ref{reduction}) either (\ref{twoorbits}) or (\ref{cycle}) hold or $\Gamma$ is a regular cover of $\Gamma/N$. We therefore assume the latter.

By Corollary~\ref{lemma:cover}, Hypothesis~A holds with $(\Gamma,G)$ replaced by $(\Gamma/N,G/N)$. By induction, the conclusion of Theorem~\ref{theorem:main1} holds with $(\Gamma,G)$ replaced by $(\Gamma/N,G/N)$.

If $(\Gamma/N,G/N)$ satisfies (\ref{PXU}), that is $\Gamma/N\cong \PX(p,r,s)$ for some $r\geq 3$ and $1\leq s\leq r-2$ then, by Theorems~\ref{Theorem:PreWreathCover} and~\ref{Theorem:WreathCover}, one of (\ref{PXU}) or (\ref{bound}) holds. If $(\Gamma/N,G/N)$ satisfies (\ref{bound}) then (\ref{bound}) holds by Lemma~\ref{lemma:quotients}~(\ref{a9}).  If $(\Gamma/N,G/N)$ satisfies (\ref{twoorbits}), (\ref{cycle}) or (\ref{semisimple}) then, since $N$ is abelian, we have that (\ref{funny}) holds. Finally, note that if $G/N$ has a normal subgroup $M/N$ such that $\Gamma/N$ is a regular cover of $(\Gamma/N)/(M/N)$ then in fact $\Gamma$ is a regular cover of $\Gamma/M$. It follows that if $(\Gamma/N,G/N)$ satisfies~(\ref{funny}) then so does $(\Gamma,G)$. 
\end{proof}

\begin{proof}[Proof of Theorem~$\ref{thm:mainbasic}$]
Let $N$ be a semiregular abelian normal subgroup of $G$ having at most two orbits and let $G_v^{[1]}$ be the kernel of the action of $G_v$ on $\Gamma(v)$. We show that $G_v^{[1]}=1$, that is, $G_v$ acts faithfully on $\Gamma(v)$. Note that this implies that $G_v\cong \L_{p,\chi}$ and $|G_v|=\chi p^2$.

If $N$ is transitive then, since it is semiregular, it is regular. It follows that $N$ has $2p$ orbits on the arcs of $\Gamma$ and that these $N$-orbits are $(v,u)^N$, as $u$ runs through the elements in $\Gamma(v)$. Observe that $G_v^{[1]}$ fixes $\Gamma(v)$ pointwise  and hence $G_v^{[1]}$ fixes each $N$-orbit on arcs. As $N\unlhd G$ and $G$ is vertex-transitive, $G_u^{[1]}$ fixes each $N$-orbit on arcs, for every $u\in \V(\Gamma)$. Let $u$ be a neighbour of $v$. Clearly, $G_v^{[1]}\leq G_u$. Moreover, since $G_v^{[1]}$ fixes each of the $2p$ $N$-orbits on arcs, it follows that $G_v^{[1]}\leq G_u^{[1]}$. Using the connectedness of $\Gamma$, by repeating this argument we get $G_v^{[1]}=1$.

If $N$ has two orbits then, since $N$ is normal in $G$ and since $G$ is edge-transitive, it follows that the orbits of $N$ form a bipartition of $\Gamma$. Since $N$ is semiregular, $N$ has precisely $2p$ orbits on the edges of $\Gamma$. The same argument as in the previous paragraph (with the role of arcs being played by edges) again yields  that $G_v^{[1]}=1$.
\end{proof}

\begin{proof}[Proof of Theorem~$\ref{GarPraCycle}$]

Clearly, $K=N\rtimes K_v$. Moreover, $\Gamma/K=\Gamma/N$ is a cycle of length $m\geq 3$ on which  $G/K$ acts faithfully. It follows from Lemma~\ref{lemma:cyclecycle} that $G_v^*=K_v$ and $K_v$ is a $p$-group. By Lemma~\ref{lemma:Malnic}, $K_v$ is elementary abelian of order at most $p^m$. This implies that $t\leq m$.

Let $C$ be the centraliser of $N$ in $K$. As $N$ is abelian, we have $N\leq C$. Also, as $N\unlhd G$ and $K\unlhd G$, we get $C\unlhd G$. Since $N$ is abelian and $K$ preserves the $N$-orbits setwise, we must have $C^\Delta=N^\Delta$ for each $N$-orbit $\Delta$. (As usual, $C^\Delta$ and $N^\Delta$ denote the permutation groups induced by $C$ and $N$ on $\Delta$.) It follows that $[C,C]$ fixes each $N$-orbits pointwise and hence $[C,C]=1$. Thus $C$ is abelian.

If $C$ is not semiregular then~(\ref{first}) follows from Corollary~\ref{RealCorollary}. We may thus assume that $C$ is semiregular. As $K=NK_v$ and $N\leq C\leq K$, this implies that $C=N$ and hence $K_v$ acts by conjugation faithfully on $N$. Observe that $\Zent K=\cent K K\leq \cent K N=N$. Moreover, since $K_v\neq 1$, we have $\Zent K< N$. Since $N$ is minimal normal in $G$, we obtain $\Zent K=1$ and thus $p\neq q$. By Lemma~\ref{PabloLemma}, we have $|N|\geq q^{t\ord_p(q)}$ and hence $|\V(\Gamma)|=m|N|\geq mq^{t\ord_p(q)}$.
\end{proof}

We are now ready to prove Theorem~\ref{theorem:main2} and Proposition~\ref{prop:explicit}. Note that this proof depends on Theorem~\ref{theo:semisimple} and Corollary~\ref{cor : (2,1)} which will be proved in Sections~\ref{sec:semisimple} and~\ref{sec:(2,1)} but whose proofs do not depend on Theorem~\ref{theorem:main2}. (We use this nonlinear order to improve the flow of the paper, as the proof of Theorem~\ref{theo:semisimple} is quite long and somewhat different in character from the rest of the paper.)

\begin{proof}[Proof of Theorem~$\ref{theorem:main2}$ and Proposition~$\ref{prop:explicit}$]
Let $e(p)$ and $c(p)$ be as in Proposition~\ref{prop:explicit}. If $(p,\chi)=(2,1)$ then the conclusion follows from Corollary~\ref{cor : (2,1)}. If $(p,\chi)=(2,2)$ then the conclusion follows from~\cite[Theorem~1.2]{PSV4valent}. We may thus assume that $p$ is odd. Let $|G_v^*|=p^t$. We will prove the following claim:

\smallskip
\noindent\textsc{Claim. } If neither~(\ref{labelmain1}) or~(\ref{mainbound}) holds then $t< e(p)$. 
\smallskip

\noindent The proof goes by induction on $|\V(\Gamma)|$. We apply Theorem~\ref{theorem:main1}. Since neither~(\ref{labelmain1}) or~(\ref{mainbound}) holds, neither of Theorem~\ref{theorem:main1}~(\ref{PXU}) or~(\ref{bound})  holds. If Theorem~\ref{theorem:main1}~(\ref{twoorbits}) holds then the claim follows from Theorem~\ref{thm:mainbasic} and noting that $2< e(p)$. 

If Theorem~\ref{theorem:main1}~(\ref{cycle}) holds then it follows from Theorem~\ref{GarPraCycle} that $|\V(\Gamma)|\geq tq^{t\ord_p(q)}$ for some prime $q$ different than $p$. Since $q^{t\ord_p(q)}\geq p+1$, we have $|\V(\Gamma)|\geq t(p+1)^t$. Since~(\ref{bound}) does not hold we have  $2tp^{t+1}=2p|G_v^*|\log_p|G_v^*|>|\V(\Gamma)|\geq t(p+1)^t$ and hence $2p^{t+1}>(p+1)^t$. It follows that $t<\frac{\log(2p)}{\log(1+1/p)}=e(p)$.

If Theorem~\ref{theorem:main1}~(\ref{semisimple}) holds then the claim follows from Theorem~\ref{theo:semisimple}. We may thus assume that Theorem~\ref{theorem:main1}~(\ref{funny}) holds. In particular, $G$ has a non-identity normal subgroup $N$ such that $\Gamma$ is a regular cover of $\Gamma/N$ and one of Theorem~\ref{theorem:main1}~(\ref{twoorbits}),~(\ref{cycle}), or (\ref{semisimple}) is satisfied with $(\Gamma,G)$ replaced by $(\Gamma/N,G/N)$. The claim then follows by induction.~$_\blacksquare$

\smallskip

If neither~(\ref{labelmain1}) or~(\ref{mainbound}) holds then our claim implies $|\V(\Gamma)|<2tp^{t+1}< 2e(p)p^{e(p)+1}=c(p)$.
\end{proof}

\begin{remark} The upper bound in Theorem~\ref{theorem:main2}~(\ref{small}) given by Proposition~\ref{prop:explicit} was chosen for simplicity, not strength. Given a fixed prime $p$, it is often possible to greatly improve upon it by using Theorems~\ref{theorem:main1},~\ref{thm:mainbasic},~\ref{GarPraCycle} and~\ref{theo:semisimple} directly.

For example, suppose that $p=19$. Following the proof of Theorem~\ref{theorem:main2}, we see that a basic exceptional pair $(\Gamma,G)$ must satisfy one of Theorem~\ref{theorem:main1}~(\ref{twoorbits}),~(\ref{cycle}) or~(\ref{semisimple}). In the first case, we have $t=2$. In the second case, we have $2t\cdot19^{t+1}>|\V(\Gamma)|\geq tq^{t\ord_{19}(q)}$ for some prime $q\neq 19$. The smallest prime power which is congruent to $1\pmod {19}$ is $191$ hence $q^{\ord_{19}(q)}\geq 191$, from which it follows  that $2\cdot 19^{t+1}> 191^t$ which is a contradiction (since $t\geq 2$). Finally, the third case does not occur by Theorem~\ref{theo:semisimple}. We thus find that $t=2$, $|G_v^*|=19^2$ and $|\V(\Gamma)|<27436$.
\end{remark}

\section{Proof of Theorem~\ref{theo:semisimple}}\label{sec:semisimple}
As Theorem~\ref{lemma:ea}~\eqref{bonbon} indicates, $G_v^*$ must contain a rather large elementary abelian $p$-subgroup. This motivates the introduction of the following definition.

\begin{definition}\label{def:eg}
Given a prime $p$ and a group $G$, the \emph{$p$-rank} $r_p(G)$ of $G$ is the minimal number of generators of an elementary abelian $p$-subgroup of $G$ of maximum order.
\end{definition}

We will need the following lemma describing the $p$-rank of a wreath product.

\begin{lemma}\label{neandertal1}
Let $p$ be a prime,  let $H$ be a group, let $K$ be permutation group on $\Delta$ and let $W=H\wr_{\Delta} K$. Then 
\[
r_p(W)=
\begin{cases}
r_p(K)&\mathrm{if }\,\,p\,\,\,\mathrm{does\, not\, divide}\,\,|H|,\\
|\Delta|r_p(H)&\mathrm{if }\,\,p\,\,\mathrm{divides }\,\,|H|.
\end{cases}
\]
\end{lemma}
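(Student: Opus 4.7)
The plan is to treat the two cases separately. When $p \nmid |H|$, the base group $H^\Delta$ has order coprime to $p$, so any elementary abelian $p$-subgroup $E$ of $W$ meets $H^\Delta$ trivially and therefore embeds into $W/H^\Delta \cong K$; the reverse inequality is immediate since $K$ is a subgroup of $W$. From now on I assume $p \mid |H|$. Cauchy's theorem gives $r_p(H) \geq 1$, and taking $P \leq H$ elementary abelian of rank $r_p(H)$, the direct power $P^\Delta$ is an elementary abelian $p$-subgroup of the base group of rank $|\Delta|\, r_p(H)$, establishing the lower bound $r_p(W) \geq |\Delta|\, r_p(H)$.

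For the matching upper bound, fix an elementary abelian $p$-subgroup $E \leq W$, let $\pi\colon W \to K$ be the natural projection and set $\bar E = \pi(E)$ and $A = E \cap H^\Delta$, so that $|E| = |A|\cdot|\bar E|$. Let $\Omega_1, \dots, \Omega_s$ be the orbits of $\bar E$ on $\Delta$ and write $|\Omega_i| = p^{a_i}$ (a $p$-power since $\bar E$ is an elementary abelian $p$-group). As $\bar E \leq K \leq \Sym(\Delta)$ acts faithfully on $\Delta$, the intersection of the pointwise stabilisers $K_i$ of the $\Omega_i$ is trivial, so $\bar E$ embeds into $\prod_i \bar E/K_i$, giving $r_p(\bar E) \leq \sum_i a_i$. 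The elementary inequality $a \leq p^a - 1$ (valid for $a \geq 0$ and $p \geq 2$) then yields $|\bar E| \leq p^{|\Delta| - s}$.

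The heart of the argument is to bound $|A|$. Since $E$ is abelian, every pair $(f, k) \in E$ and $(g, 1) \in A$ commute; expanding the commutator in the wreath product shows, coordinatewise, that $g_{\delta^{k^{-1}}} = f_\delta^{-1}\, g_\delta\, f_\delta$ for all $\delta \in \Delta$, where the subscript $\delta$ denotes the $\delta$-coordinate in $H^\Delta$. Pick a representative $\delta_i \in \Omega_i$ for each $i$; then for any $\delta \in \Omega_i$ one can choose $(f, k) \in E$ with $\delta = \delta_i^{k^{-1}}$, and the identity above expresses $g_\delta$ as a specific $H$-conjugate of $g_{\delta_i}$. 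Hence the coordinate-projection $A \to H^s$ defined by $g \mapsto (g_{\delta_1}, \ldots, g_{\delta_s})$ is injective, whence $|A| \leq p^{s\, r_p(H)}$. Combining the two bounds gives $|E| \leq p^{s\, r_p(H) + |\Delta| - s} \leq p^{|\Delta|\, r_p(H)}$, the last inequality being the tautology $(|\Delta| - s)(r_p(H) - 1) \geq 0$ (using $r_p(H) \geq 1$).

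The main obstacle is this bound on $|A|$: because $H$ may be non-abelian, the abelianness of $E$ does not force $A$ to be centralised coordinatewise by $\bar E$ in $H^\Delta$. Instead one obtains only the twisted relation ${}^k g = g^f$, which must be exploited carefully (together with the fact that $\bar E$ is transitive on each of its orbits) to still secure the coordinatewise injection $A \hookrightarrow H^s$.
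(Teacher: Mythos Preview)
Your proof is correct and follows essentially the same approach as the paper: the crucial observation that, within each $\bar E$-orbit on $\Delta$, the coordinates of an element of $A = E\cap H^\Delta$ are all conjugates (via base components of chosen elements of $E$) of its value at a fixed representative, so that $A$ injects into $H^s$. The only cosmetic difference is that the paper first treats the transitive case $s=1$ and then reduces the general case to it via the projections $E \to H\wr_{\Omega_i}K$, whereas you handle all orbits simultaneously and combine the bounds $|A|\le p^{s\,r_p(H)}$ and $|\bar E|\le p^{|\Delta|-s}$ directly; the arithmetic comes out the same.
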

\begin{proof}
Let $\Omega=H\times \Delta$ and consider $W$ as a permutation group on $\Omega$, where $H$ acts on the set $H$ by right multiplication. Let $\Sigma=\{H\times \{\delta\}\mid \delta\in \Delta\}$ and note that this is a system of imprimitivity for $W$. We identify $\Sigma$ with $\Delta$ by identifying the block $H\times\{\delta\}\in\Sigma$ with $\delta\in \Delta$. In particular, we say that a subgroup of $W$ is transitive on $\Delta$ if it is transitive on $\Sigma$. Let $B$ be the kernel of the action of $W$ on $\Delta$ and note that $B=H^\Delta$ and $W=B\rtimes K$.

If $p$ does not divide $|H|$ then $|B|$ is coprime to $p$ and hence $K$ contains a Sylow $p$-subgroup of $W$. If follows that $r_p(W)=r_p(K)$.

We thus assume that $p$ divides $|H|$. Clearly, $r_p(W)\geq r_p(B)=|\Delta|r_p(H)$. Let $E$ be an elementary abelian $p$-subgroup of $W$ and let $\mathcal{O}_1,\ldots,\mathcal{O}_r$ be the orbits of $E$ on $\Delta$.  We show that $r_p(E)\leq r_p(H)|\Delta|$. Since we are only interested in $r_p(E)$, we replace $K$ by the projection of $E$ on $K$ with no loss of generality.

 Assume first that $r=1$, that is, $E$ acts transitively on $\Delta$. Fix $\delta_0\in \Delta$ and, for every $\delta\in \Delta$, let $e_\delta$ be an element of $E$ such that $\delta^{e_\delta}=\delta_0$. Write $e_\delta=\sigma_\delta g_\delta\in E$ with $g_\delta\in B$ and $\sigma_\delta\in K$. Note that $\delta^{\sigma_\delta}=\delta_0$. Let $\pi_0:E\cap B \to H$ be the projection $f\mapsto f(\delta_0)$ on the $\delta_0$-coordinate of $B$. Let $f$ be an element of $E\cap B$. Since $E$ is abelian, we have $f=f^{e_\delta}=f^{\sigma_\delta g_\delta}=g_\delta^{-1}f^{\sigma_\delta }g_\delta$ and  \[f(\delta)=f(\delta_0^{\sigma_\delta^{-1}})=f^{\sigma_\delta}(\delta_0)=(g_\delta fg_\delta^{-1})(\delta_0)=g_\delta(\delta_0)f(\delta_0)g_\delta^{-1}(\delta_0)=f(\delta_0)^{g_\delta(\delta_0)}.\]
 Thus, for each $\delta\in \Delta$, $f(\delta)$ is conjugate (via $g_\delta^{-1}(\delta_0)$) to $f(\delta_0)$, and hence $f$ is uniquely determined by its value on $\delta_0$. Since the family $(g_\delta)_{\delta\in \Delta}$ does not depend on $f$, we obtain that $E\cap B\cong \pi_0(E\cap B)$ and hence  $E\cap B$ is isomorphic to an elementary abelian subgroup of $H$. Thus $|E\cap B|\leq p^{r_p(H)}$. As $E$ is abelian and transitive on $\Delta$, the group $E/(E\cap B)$ acts regularly on $\Delta$ and hence $|E:E\cap B|=|\Delta|$. It follows that $|E|=|E:E\cap B||E\cap B|\leq |\Delta|p^{r_p(H)}$. Since $|H|$ is divisible by $p$, we have $p\leq p^{r_p(H)}$ and a moment's thought gives $p^{r_p(E)}=|E|\leq |\Delta|p^{r_p(H)}\leq p^{r_p(H)|\Delta|}$, concluding the case $r=1$.

Assume now that $r> 1$. For $i\in \{1,\ldots,r\}$, denote by $E_i$ the projection of $E$ on $H\wr_{\mathcal{O}_i}K$. We have $E\leq E_1\times \cdots \times E_r$ and, using the case $r=1$, we obtain
$$|E|\leq \prod_{i=1}^r|E_i|=\prod_{i=1}^r p^{r_p(E_i)}\leq \prod_{i=1}^rp^{r_p(H)|\mathcal{O}_i|}=p^{r_p(H)\sum_i|\mathcal{O}_i|}=p^{r_p(H)|\Delta|},$$ completing the case $r>1$.
\end{proof}

\begin{lemma}\label{neandertal2}The $p$-rank of $\mathrm{Aut}(\mathrm{Alt}(n))$ is $\lfloor n/p\rfloor$.
\end{lemma}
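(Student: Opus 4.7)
The plan is to match upper and lower bounds. Assuming $n\ge 5$, $\Sym(n)\le\Aut(\Alt(n))$ contains the elementary abelian $p$-subgroup generated by $\lfloor n/p\rfloor$ pairwise-disjoint $p$-cycles, giving the lower bound $r_p(\Aut(\Alt(n)))\ge\lfloor n/p\rfloor$. For the upper bound, note that $|\Aut(\Alt(n)):\Sym(n)|\in\{1,2\}$, with equality only when $n=6$. Hence, whenever $p$ is odd or $n\ne 6$, a Sylow $p$-subgroup of $\Aut(\Alt(n))$ coincides with one of $\Sym(n)$, so the bound reduces to showing $r_p(\Sym(n))=\lfloor n/p\rfloor$.

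I would prove this via the classical description of a Sylow $p$-subgroup of $\Sym(n)$ as $P\cong\prod_{i\ge 1}W_i^{a_i}$, where $n=\sum_ia_ip^i$ is the base-$p$ expansion and $W_i$ is the $i$-fold iterated wreath product $\C_p\wr\cdots\wr\C_p$, realised as a Sylow $p$-subgroup of $\Sym(p^i)$. Since $|W_{i-1}|$ is divisible by $p$ for $i\ge 2$, Lemma~\ref{neandertal1} applied with $H=W_{i-1}$, $K=\C_p$ and $|\Delta|=p$ gives $r_p(W_i)=p\cdot r_p(W_{i-1})$. Starting from $r_p(W_1)=1$, induction yields $r_p(W_i)=p^{i-1}$, and therefore $r_p(P)=\sum_{i\ge 1}a_ip^{i-1}=\lfloor n/p\rfloor$. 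Since every elementary abelian $p$-subgroup of $\Sym(n)$ is conjugate into $P$, we conclude $r_p(\Sym(n))=\lfloor n/p\rfloor$.

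The only remaining case is $(n,p)=(6,2)$, where the exceptional isomorphism $\Out(\Alt(6))\cong\C_2\times\C_2$ produces three index-$2$ overgroups of $\Alt(6)$ in $\Aut(\Alt(6))$: namely $\Sym(6)$, $M_{10}$ and $\PGL_2(9)$. Let $E\le\Aut(\Alt(6))$ be an elementary abelian $2$-subgroup. Because a Sylow $2$-subgroup of $\Alt(6)$ is dihedral of order $8$, we have $|E\cap\Alt(6)|\le 4$. The key input is that $M_{10}$ contains no involution outside $\Alt(6)$ (equivalently, a Sylow $2$-subgroup of $M_{10}$ is semidihedral of order $16$), so the non-trivial coset of $\Alt(6)$ in $M_{10}$ cannot meet $E$. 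Hence the image of $E$ in $\Out(\Alt(6))$ has order at most $2$, giving $|E|\le 4\cdot 2=8$ and thus $r_2(\Aut(\Alt(6)))=3=\lfloor 6/2\rfloor$. The main obstacle is precisely this exceptional case: the anomalous size of $\Out(\Alt(6))$ threatens to create a $\C_2^4$, and closing the gap genuinely requires the specific fact that $M_{10}\setminus\Alt(6)$ is involution-free.
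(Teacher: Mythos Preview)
Your proof is correct and takes a genuinely different route from the paper's. For the generic case $n\ne 6$, the paper argues directly: given a maximal elementary abelian $p$-subgroup $E\le\Sym(n)$, it decomposes $E$ according to its orbits on $\{1,\ldots,n\}$, uses that each orbit restriction is regular, and bounds $|E|$ via the inequality $a\le p^{a/p}$ for $p$-powers $a$. You instead exploit the classical description of a Sylow $p$-subgroup of $\Sym(n)$ as a product of iterated wreath products $W_i$ and feed this into Lemma~\ref{neandertal1} to obtain $r_p(W_i)=p^{i-1}$ inductively; summing over the base-$p$ digits of $n$ then gives $\lfloor n/p\rfloor$. Your approach has the pleasant feature of actually using the preceding lemma, tying the two results together, whereas the paper's orbit argument is entirely self-contained and perhaps more elementary.

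For the exceptional case $(n,p)=(6,2)$, the paper simply notes that $\Aut(\Alt(6))=\PGammaL_2(9)$ and appeals to a computation. Your argument is explicit and conceptual: you identify the three index-$2$ overgroups of $\Alt(6)$ and use the fact that the coset $M_{10}\setminus\Alt(6)$ contains no involutions (equivalently, that a Sylow $2$-subgroup of $M_{10}$ is semidihedral) to force the image of any elementary abelian $2$-subgroup in $\Out(\Alt(6))\cong\C_2\times\C_2$ to miss one nontrivial element. This is a genuine improvement over ``the result follows with a computation''.
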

\begin{proof}
Suppose that $n\neq 6$. Then $\Aut(\mathrm{Alt}(n))=\Sym(n)$. Write $n=qp+r$, with $0\leq r\leq p-1$. A Sylow $p$-subgroup of $\Sym(n)$ fixes $r$ points of $\{1,\ldots,n\}$ and hence is conjugate to a Sylow $p$-subgroup of $\Sym(pq)$. In particular, we may assume that $r=0$. Clearly, $r_p(\Sym(n))\geq n/p$. 

Let $E$ be an elementary abelian $p$-subgroup of $\mathrm{Sym}(n)$ of maximal cardinality. Let $\mathcal{O}_1,\ldots,\mathcal{O}_\ell$ be the orbits of $E$ on $\{1,\ldots,n\}$ and let $E_i$ be the permutation group induced by $E$ on $\mathcal{O}_i$. Since $E_i$ is abelian, it acts regularly on $\mathcal{O}_i$ and hence $|E_i|=|\mathcal{O}_i|$. Moreover, by maximality, we have $E=E_1\times E_2\times \cdots \times E_\ell$.

Note that, if $a$ is a power of $p$ then $a\leq p^{a/p}$. Thus
$$|E|=\prod_{i=1}^\ell|E_i|=\prod_{i=1}^\ell|\mathcal{O}_i|\leq \prod_{i=1}^\ell p^{|\mathcal{O}_i|/p}=p^{\sum_i |\mathcal{O}_i|/p}=p^{n/p}$$
and $r_p(\Sym(n))=r_p(E)\leq n/p$.

When $n=6$, we have $\Aut(\mathrm{Alt}(n))=\mathrm{P}\Gamma\mathrm{L}_2(9)$ and the result follows with a computation.
\end{proof}

\begin{table}[!h]
\begin{center}
\begin{tabular}{|c|c|c|}\hline
$T$& $p$ &$\ell$\\\hline
$\Alt(5)$ &$3$&$1,2,3$\\
$\Alt(5)$ &$5$&$\ell$\\
$\Alt(6)$ &$3$&$\ell$\\
$\Alt(6)$ &$5$&$1,2,3$\\
$\Alt(7)$ &$3$&$1,2$\\
$\Alt(7)$ &$5,7$&$1$\\
$\Alt(8),\Alt(9)$ &$3$&$1$\\\hline
$M_{11}$&$3$&$1$\\
$M_{11}$&$11$&$1,2$\\
$J_1$&$19$&$1$\\
$J_2$&$5$&$1$\\\hline
$\PSL_2(p^f)$&$p$ &$\ell$\\
$\PSL_3(p)$& $3,5, 7, 11, 13, 19$   & $1$ \\
$\PSL_4(p)$&$3, 5, 7$    & $1$ \\
$\PSU_3(p)$&$3,5,7,11,17,23,29$    & $1$ \\
$\PSU_4(3)$&$3$ & $1,2$ \\
$\PSU_4(p)$&$5,7$ & $1$ \\
$\PSp_4(p^f)$&$p$ &$\ell$\\  
$\PSp_6(p)$&$3,5,7$&$1$\\
$G_2(3)$&$3$&$1,2$\\
$G_2(9)$&$3$&$1$\\\hline

$\PSL_2(r^f),\PSL_3(r^f),\PSU_3(r^f)$&$p\neq r$&$\ell$\\
$\PSU_5(2)$&$3$&$1$\\
${}^2B_2(2^3)$&$13$&$1$\\\hline
\end{tabular}
\end{center}
\caption{Exceptional cases in Theorem~\ref{thrm:ss1}}\label{table:bt}
\end{table}

\begin{theorem}\label{thrm:ss1}
Let $p$ be an odd prime, let $\ell\geq 1$, let $T$ be an non-abelian simple group and let $W=\Aut(T)\wr\Sym(\ell)$. Then either $\ell|T|^\ell>6 r_p(W)p^{3r_p(W)+1}$ or $(T,p,\ell)$ appears in Table~$\ref{table:bt}$.
\end{theorem}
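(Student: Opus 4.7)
My plan is to apply Lemma \ref{neandertal1} to reduce the computation of $r_p(W)$ to a $p$-rank of either $\Sym(\ell)$ or $\Aut(T)$, and then to carry out a case-by-case analysis based on the classification of finite simple groups. I split the argument according to whether $p$ divides $|\Aut(T)|$. If $p\nmid|\Aut(T)|$, then by Lemma \ref{neandertal1} we have $r_p(W)=r_p(\Sym(\ell))=\lfloor\ell/p\rfloor$, the latter equality being Lemma \ref{neandertal2}. If $\ell<p$, then $r_p(W)=0$ and the inequality holds trivially. Otherwise, writing $k=\lfloor\ell/p\rfloor\geq 1$ and using $\ell\geq kp$, it suffices to show $|T|^{kp}>6p^{3k}$, equivalently $|T|^p>6^{1/k}p^3$. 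This holds comfortably since $|T|\geq 60$ for every non-abelian simple group and $p\geq 3$; hence no triple with $p\nmid|\Aut(T)|$ is exceptional.

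In the complementary case $p\mid|\Aut(T)|$, set $r:=r_p(\Aut(T))$. Then $r_p(W)=\ell r$ by Lemma \ref{neandertal1}, and the inequality $\ell|T|^\ell>6\ell r\,p^{3\ell r+1}$ rearranges to
\[\bigl(|T|/p^{3r}\bigr)^\ell>6rp.\]
Thus the inequality fails if and only if either $|T|\leq p^{3r}$ (in which case it fails for every $\ell\geq 1$), or $|T|>p^{3r}$ and $\ell<\log(6rp)/\log(|T|/p^{3r})$ (yielding only finitely many exceptional $\ell$ for each $(T,p)$). The whole problem therefore reduces to computing $r_p(\Aut(T))$ for each non-abelian simple $T$ and each prime $p\mid|\Aut(T)|$, and comparing $|T|$ with $p^{3r}$.

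I would then carry out this comparison along the classification of finite simple groups. For $T=\Alt(n)$ with $n\neq 6$, we have $r=\lfloor n/p\rfloor$ by Lemma \ref{neandertal2}, and Stirling's formula gives $|T|>p^{3r}$ outside a short list of small $n$; the case $n=6$ is handled directly. For the 26 sporadic groups, one inspects $|T|$ and $r$ individually using standard references such as the ATLAS. For $T=G(q)$ a group of Lie type in cross-characteristic ($p\nmid q$), Sylow $p$-subgroups are abelian of rank bounded by the Lie rank of $G$, while $|T|$ grows polynomially in $q$, so $|T|\gg p^{3r}$ except for a few tiny examples such as $\PSU_5(2)$ and ${}^2B_2(8)$. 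The subtle case is Lie type in defining characteristic, $T=G(p^f)$: here the Sylow $p$-subgroup is the unipotent radical of a Borel subgroup, $|T|$ is roughly $p^{\dim G}$, and $r$ is close to $f$ times the Lie rank (with corrections from diagonal and field automorphisms); hence $|T|$ and $p^{3r}$ are of comparable size, so many triples are exceptional, producing most of the entries of Table \ref{table:bt}.

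The main obstacle is the defining-characteristic Lie-type analysis, where the competition between $|T|$ and $p^{3r}$ is tight and one must determine $r_p(\Aut(T))$ precisely, accounting for the interaction of the Sylow $p$-subgroup of $T$ with diagonal, field, and graph automorphisms (for instance when $p\mid f$, or when a graph automorphism contributes to the $p$-rank). A secondary difficulty is verifying the short finite list of exceptional $\ell$ in the borderline situations where $|T|/p^{3r}>1$ is small. Both issues are resolved by a careful CFSG-driven enumeration, which is exactly what populates Table \ref{table:bt}.
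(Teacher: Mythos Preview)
Your outline follows essentially the same route as the paper: split on whether $p$ divides $|\Aut(T)|$, apply Lemma~\ref{neandertal1} to reduce $r_p(W)$ to $r_p(\Sym(\ell))$ or $\ell\cdot r_p(\Aut(T))$, and then run a CFSG case analysis (alternating, sporadic, Lie type in defining versus cross characteristic). Your rearrangement to $(|T|/p^{3r})^\ell>6rp$ is a clean way to see why some $(T,p)$ give infinitely many exceptional $\ell$ and others only finitely many, and it matches exactly what the table records.

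Two imprecisions in your Lie-type sketch are worth flagging, since they would bite in an actual write-up. In defining characteristic $T=G(p^f)$, the $p$-rank is \emph{not} close to $f$ times the Lie rank: for $\PSL_{2m}(p^f)$ it is $m^2f$, for $\PSp_{2m}(p^f)$ it is $\binom{m+1}{2}f$, and so on (these are the maximal abelian unipotent dimensions, tabulated in the paper's Table~\ref{lie} with references). In cross characteristic, the statement that Sylow $p$-subgroups are abelian is not what is used; what one needs (and what the paper invokes via \cite[Theorem~4.10.2]{GLS}) is that every elementary abelian $p$-subgroup of the inner-diagonal group $T_0$ lies in a maximal torus, giving $p^{r_p(T_0)}\leq (q_0+1)^k$ with $k$ the Lie rank. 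This torus bound on $p^{r}$ (not merely $r\leq k$) is what makes the comparison $|T|$ versus $p^{3r}$ go through, and one must also account for the possible contributions of field and graph automorphisms to $r_p(\Aut(T))$ beyond $r_p(T_0)$.
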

\begin{proof}
We first consider the case that $p$ does not divide $|\Aut(T)|$. From Lemmas~\ref{neandertal1} and~\ref{neandertal2}, we have $r_p(W)=r_p(\Sym(\ell))\leq r_p(\Aut(\Alt(\ell)))=\lfloor \ell/p\rfloor$ and therefore $$6r_p(W)p^{3r_p(W)+1}\leq 6\frac{\ell}{p}p^{3\ell/p+1}=6\ell\left( p^{3/p}\right)^\ell\leq 6\ell 3^\ell,$$
where in the last inequality we used the fact that $p^{3/p}\leq 3$ (which is valid since $p\geq 3$). Observe also that, as $|T|\geq 60$, we have  $\ell|T|^\ell> 6\ell 3^\ell$. Therefore the inequality $\ell|T|^\ell>6 r_p(W)p^{3r_p(W)+1}$ is always satisfied. 

In the rest of this proof we assume that $p$ divides $|\Aut(T)|$. From Lemma~\ref{neandertal1}, we have $r_p(W)=\ell r_p(\Aut(T))$ and hence the inequality in the statement of Theorem~\ref{thrm:ss1} becomes:
\begin{equation}\label{eq:eq1}
|T|^\ell> 6 r_p(\Aut(T))p^{3\ell r_p(\Aut(T))+1}.
\end{equation}
We will subdivide the proof depending on the isomorphism class of $T$ using the Classification of the Finite Simple Groups.

\medskip
\textbf{Alternating groups.}
Assume that $T=\Alt(m)$ for some $m\geq 5$. From Lemma~\ref{neandertal2}, we have $r_p(\Aut(T))=\lfloor m/p\rfloor$ and
$$6 r_p(\Aut(T))p^{3\ell r_p(\Aut(T))+1}\leq 6 m p^{3\ell m/p}\leq 6 m3^{\ell m},$$
where again we used the inequality $p^{3/p}\leq 3$. Now it is a computation to verify that $|T|^\ell=(m!/2)^\ell>6 m 3^{\ell m}$ for every $m\geq 11$. Finally, a case by case analysis (using Lemma~\ref{neandertal1}) shows that for $m\leq 10$ either \eqref{eq:eq1} holds or $(T,p,\ell)$ appears in Table~\ref{table:bt}.

\medskip
\textbf{Sporadic groups.}
Assume that $T$ is isomorphic to one of the twenty-six sporadic simple groups. From~\cite[page~viii]{ATLAS}, we see that $|\Aut(T):T|\leq 2$ and, since $p\geq 3$, we have $r_p(\Aut(T))=r_p(T)$. For every prime $p$ and sporadic group $T$, the value of $r_p(T)$ is tabulated in~\cite[Table $5.6.1$]{GLS}. By considering each sporadic group one-by-one, it is easy to check that  either~\eqref{eq:eq1} holds or $(T,p,\ell)$ appears in Table~\ref{table:bt}.

\medskip
\textbf{Groups of Lie type.}
For the rest of this proof we assume that $T$ is a simple group of Lie type defined over the finite field of order $q$. For twisted groups our notation for $q$ is such that ${}^2B_2(q)$, ${}^2G_2(q)$, ${}^2F_4(q)$,
${}^3D_4(q)$, ${}^2E_6(q)$, $\PSU_n(q)$ and $\POmega_{2m}^-(q)$ are the twisted groups contained in $B_2(q)$,
$G_2(q)$, $F_4(q)$, $D_4(q^3)$, $E_6(q^2)$, $\PSL_n(q^2)$ and $\POmega_{2m}^+(q^2)$, respectively. Moreover, taking in account the exceptional isomorphisms between non-abelian simple groups, we will assume that $T$ is not one of $\PSL_2(4)$, $\PSL_2(5)$, $\PSL_2(9)$, $\PSL_4(2)$ or $(\PSp_4(2))'$. 

Write $q=r^f$, with $r$ prime and $f\geq 1$. Define \[
\delta_{f,p}=
\begin{cases} 
1&\mathrm{if}\,\, p~\mathrm{ divides }~f,\\
0&\mathrm{if}\,\, p~\mathrm{does~not~divide }~f,
\end{cases}
\]
and
\[
\varepsilon=
\begin{cases}
1&\mathrm{if}\,\,p=3 \textrm{ and }T=\POmega_8^+(q),\\
0&\mathrm{otherwise}.
\end{cases}
\]

As usual, we follow closely the notation in~\cite{GLS}. Let $T_0$ be the subgroup of $\Aut(T)$ consisting of the inner-diagonal automorphisms of $T$. (For example, if $T=\mathrm{PSL}_n(q)$ then $T_0=\mathrm{PGL}_n(q)$ and if $T=\mathrm{PSU}_n(q)$ then $T_0=\mathrm{PGU}_n(q)$.)
Using the terminology in \cite{GLS}, $\Out(T)$ is the product (in this order) of $T_0$, the group of field automorphisms (which is cyclic) and of the group of graph automorphisms (which has order at most $2$, unless $T=\POmega_8^+(q)$ in which case it has order $6$). This shows that 

\begin{equation}\label{NewEq}
r_p(\Aut(T))\leq r_p(T_0)+\delta_{f,p}+\varepsilon.
\end{equation}

We first consider the case that $r=p$. Then $|T_0:T|$ is coprime to $p$ and hence $r_p(T_0)=r_p(T)$. Observe further that the maximal order of an abelian unipotent subgroup of $T$ is an upper bound for $p^{r_p(T)}$. The maximal order of a unipotent subgroup of non-abelian simple groups of Lie type is known. The resulting upper bounds for $r_p(T)$ as well as the corresponding reference can be found in Table~\ref{lie} (except for the Ree group $T={}^2G_2(3^f)$ where we give the exact value of $p^{r_p(T)}$). 

Using~(\ref{NewEq}), these upper bounds for $r_p(T)$ yield upper bounds for $r_p(\Aut(T))$. It is then a matter of checking each family one-by-one. (In some cases, the following improvements to using~(\ref{NewEq}) are useful: $r_3(\Aut(\PSL_3(3^3)))=6$, $r_3(\Aut(\PSU_3(3^3)))=6$,  $r_3(\Aut(\PSU_4(3^3)))=12$ and $r_3(G_2(3^3))=12$. These can be easily verified using \texttt{magma}~\cite{magma}, for example).

\begin{table}[!h]
\begin{center}
\begin{tabular}{|c|c|c|c|c}\hline
Lie type&Upper bound for $r_p(T)$ &Reference\\\hline
$\PSL_{2m}$&${m^2}f$&\cite[Theorem~$2.1$]{B}\\
$\PSL_{2m+1}$&${m(m+1)}f$&\cite[Theorem~$2.1$]{B}\\
$\POmega_7$&$5f$&\cite[Theorem~$5.1$]{B}\\
$\POmega_{2m+1}~(m\geq 4)$&$({m(m-1)/2+1})f$&\cite[Theorems~$5.2$ and~$5.3$]{B}\\
$\PSp_{2m}~(m\geq 2)$&${m(m+1)}f/2$&\cite[Theorem~$2.5$]{B}\\
$\POmega_{2m}^+~(m\geq 4)$&${m(m-1)}f/2$&\cite[Theorems~$3.1$ and~$3.2$]{B}\\
$E_{6}$&${16}f$&\cite[Table~$4$]{V}\\
$E_{7}$&${27}f$&\cite[Table~$4$]{V}\\
$E_{8}$&${36}f$&\cite[Table~$4$]{V}\\
$F_4$&${9}f$&\cite[Table~$4$]{V}\\
$G_{2}$&$3f$ if $p\neq 3$, $4f$ if $p=3$&\cite[Table~$4$]{V}\\
$\PSU_{2m+1}$&$(m^2+1)f$&\cite[Theorem~$1$]{W}\\
$\PSU_{2m}~(m\geq 2)$&$m^2f$&\cite[Theorem~$1$]{W}\\
$^2B_{2}$&$f$&\cite{S}\\
$\POmega_8^-$&$6f$&\cite[table on page~$230$]{W2}\\
$\POmega_{2m}^-~(m\geq 5)$&$((m-1)(m-2)/2+2)f$&\cite[Theorem~I~(d)]{W2}\\
$^3D_{4}$&$5f$&\cite[Table~$4$]{V}\\
$^2E_{6}$&$12f$&\cite[Table~$4$]{V}\\
$^2F_{4}$&$5f$&\cite[Table~$4$]{V}\\
$^2G_{2}$&$f$&\cite{R}\\\hline
\end{tabular}
\end{center}
\caption{Upper bound for $r_p(T)$ for groups of Lie type with $p=r$ in Theorem~
\ref{thrm:ss1}}\label{lie}
\end{table}

We now consider the case that $r\neq p$. Let $E$ be an elementary abelian $p$-subgroup of maximal order of $T_0$. From~\cite[Theorem~$4.10.2$]{GLS}, it follows that $E$ is contained in a maximal torus $V$ of $T_0$ and that 
\begin{equation}\label{EQ1}
r_p(E)\leq k,
\end{equation} where $k$ is the Lie rank of the algebraic group associated with $T_0$. It is well-known that $|V|\leq (q_0+1)^k$, where $q_0=\sqrt{q}$  when $T$ is  isomorphic to the Suzuki group ${}^2B_2(q)$ or to a Ree group ${}^2G_2(q)$, ${}^2F_4(q)$ and $q_0=q$  otherwise (for a detailed proof tailored to our needs see for example~\cite[Lemma~$2.4$]{SZ}). In particular, 
\begin{equation}\label{EQ2}
|E|\leq (q_0+1)^k
\end{equation} 
and hence
\begin{equation}\label{EQ3}
p\leq (q_0+1)^k.
\end{equation}

When $\delta_{f,p}=\varepsilon=0$, we have $r_p(\Aut(T))=r_p(T_0)=r_p(E)$ and hence, from~\eqref{EQ1},~\eqref{EQ2} and~\eqref{EQ3}, we get
\begin{align*}
6r_p(\Aut(T))p^{3\ell r_p(\Aut(T))+1}&\leq 6k (p^{r_p(E)})^{3\ell}\cdot p=6k |E|^{3\ell}\cdot p\\
&\leq 6k(q_0+1)^{3\ell k}(q_0+1)^k=6k(q_0+1)^{3\ell k+k}.
\end{align*}
When $\delta_{f,p}=1$, we have $p\leq f$ and, by~\eqref{NewEq} and~\eqref{EQ1}, we have  
$$6r_p(\Aut(T))p^{3\ell r_p(\Aut(T))+1}\leq 6 (k+1+\varepsilon) f^{3\ell(k+1+\varepsilon)+1}.$$ 
Finally, when $\delta_{f,p}=0$ and $\varepsilon=1$, we have $T=\POmega_8^+(q)$, $p=3$ and $k=8$. By~\eqref{NewEq} and~\eqref{EQ1} we have $r_p(\Aut(T))\le k+1=9$ and hence   
$$6r_p(\Aut(T))p^{3\ell r_p(\Aut(T))+1}\leq 6\cdot 9\cdot 3^{3\ell\cdot 9+1}.$$ 

By going through each family one-by-one and with some computation, one may use these inequalities to see that~\eqref{eq:eq1} holds, except  possibly when $T$ is one of the following groups:
\begin{align*}
&{}^2B_2(2^{3}) ,\, {}^2B_2(2^{5}) ,\, {}^2B_2(2^{7}) ,\, \mathrm{P}\Omega_8^+(2),\,\mathrm{P}\Omega_8^-(2),\,\mathrm{PSp}_6(2),\,\mathrm{PSp}_4(q)\,(\mathrm{with}\, q\in \{3,4,5,7\}),\\
&\mathrm{PSL}_2(q),\,\mathrm{PSL}_3(q),\,\mathrm{PSU}_3(q),\\
&\mathrm{PSL}_4(q)\,(\mathrm{with}\, q\in \{3,4,5,9\}),\,\mathrm{PSU}_4(q)\,(\mathrm{with}\, q\in \{2,3,4,5,7\}),\\
&\mathrm{PSL}_5(2),\,\mathrm{PSU}_5(2),\,\mathrm{PSL}_6(2),\,\mathrm{PSU}_6(2).
\end{align*}

We now study these exceptions in turn.
By factorizing the order of ${}^2B_2(2^3)$, ${}^2B_2(2^5)$ and ${}^2B_2(2^7)$ (and using $r_5(\Aut({}^2B_2(2^5)))=2$, which can be verified with \texttt{magma}), we see that~\eqref{eq:eq1} holds in these cases except when $(T,p,\ell)=({}^2B_2(2^3),13,1)$ which appears in Table~\ref{table:bt}.

Similarly, using the fact that $r_3(\Aut(\POmega_8^+(2)))=4$, $r_5(\Aut(\POmega_8^+(2)))=2$, $r_7(\Aut(\POmega_8^+(2)))=1$, $r_3(\Aut(\POmega_8^-(2)))=3$ and $r_p(\Aut(\POmega_8^-(2)))=1$ for $p\in \{5,7,17\}$, one can check that~\eqref{eq:eq1} holds when $T$ is $\POmega_8^+(2)$ or $\POmega_8^-(2)$. The case when $T=\PSp_6(2)$ or $T=\PSp_4(q)$ with  $q\in \{3,4,5,7\}$ is dealt with in a similar way.

Suppose now that $T=\PSL_n(q)$ (with $n\geq 2$) or $\PSU_n(q)$ (with $n\geq 3$). Since the groups $\PSL_2(q)$, $\PSL_3(q)$ and $\PSU_3(q)$ appear in Table~\ref{table:bt}, we may assume that $n\geq 4$. In particular, there are only finitely many exceptions left to deal with. For each of these, we compute the exact value of $r_p(\Aut(T))$ for each prime divisor of $p$ of $|\Aut(T)|$ with $p\neq r$. It is then straightforward to check that~\eqref{eq:eq1} holds in each of these cases except when $(T,p,\ell)=(\PSU_5(2),3,1)$, or $(T,p)=(\PSU_4(2),3)$ which, in view of the exceptional isomorphism $\PSU_4(2)\cong \PSp_4(3)$, was already dealt with in the case $r=p$.
\end{proof}

\begin{lemma}\label{Sylowp}Let $n\geq 1$, let $p$ be a prime and let $P$ be a Sylow $p$-subgroup of $\Sym(n)$. Then $|P|\leq p^{(n-1)/(p-1)}$. Moreover, the function $p\mapsto p^{1/(p-1)}$ is strictly decreasing.
\end{lemma}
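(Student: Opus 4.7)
The plan is to use the standard formula for the exponent of $p$ in $n!$. By Legendre's formula,
\[
v_p(n!) \;=\; \sum_{i=1}^{\infty} \left\lfloor \frac{n}{p^i} \right\rfloor \;=\; \frac{n - s_p(n)}{p-1},
\]
where $s_p(n)$ is the sum of the base-$p$ digits of $n$. Since $n \geq 1$, we have $s_p(n) \geq 1$, so $v_p(n!) \leq (n-1)/(p-1)$. Since $|P| = p^{v_p(n!)}$, the first bound follows immediately. (Alternatively, one can prove this by induction on $n$, or by the wreath product description of $P$ as an iterated wreath product of cyclic groups of order $p$, but Legendre's formula is the cleanest route.)

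For the second statement, it suffices to show that $f(x) := \log(x)/(x-1)$ is strictly decreasing for $x > 1$, since $p^{1/(p-1)} = \exp(f(p))$. Computing,
\[
f'(x) \;=\; \frac{(x-1)/x - \log x}{(x-1)^2} \;=\; \frac{1 - 1/x - \log x}{(x-1)^2}.
\]
The standard inequality $\log x > 1 - 1/x$ for $x > 1$ (which follows from comparing $\log x$ and $1 - 1/x$ at $x=1$ and noting that $(\log x)' = 1/x > 1/x^2 = (1 - 1/x)'$ for $x > 1$) shows that the numerator is negative for $x > 1$, so $f'(x) < 0$. Hence $f$ is strictly decreasing on $(1, \infty)$, and therefore so is $x \mapsto x^{1/(x-1)}$.

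No step poses a real obstacle; the only subtlety is remembering the exact form of Legendre's identity that yields the sharper bound $(n-1)/(p-1)$ rather than the trivial $n/(p-1)$, and confirming the monotonicity via a one-line calculus argument rather than by comparing specific prime values.
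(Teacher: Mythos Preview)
Your proof is correct and essentially the same as the paper's. The paper computes $|P|$ via the wreath-product decomposition of a Sylow $p$-subgroup of $\Sym(n)$ and arrives at the exponent $\frac{1}{p-1}\bigl(n-\sum_i a_i\bigr)$, which is exactly Legendre's identity $v_p(n!)=(n-s_p(n))/(p-1)$ that you invoke directly; for the monotonicity claim the paper simply asserts it is easy to verify, whereas you supply the one-line calculus argument.
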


\begin{proof}
Let $n=a_0+a_1p+\cdots +a_kp^k$ be the $p$-adic expansion of $n$, that is, $a_0,\ldots,a_k\in \{0,\ldots,p-1\}$ with $a_k\neq 0$. Let $P_i$ be a Sylow $p$-subgroup of $\Sym(p^i)$ and observe that $|P_i|=p^{\frac{p^i-1}{p-1}}$. It is well-known that 
$$P\cong \underbrace{P_0\times \cdots \times P_0}_{a_0\, \mathrm{times}}\times 
 \underbrace{P_1\times \cdots \times P_1}_{a_1\, \mathrm{times}}\times \cdots\times
 \underbrace{P_k\times \cdots \times P_k}_{a_k\, \mathrm{times}}
$$ from which it follows  that
\begin{equation*}\label{eqsym1}|P|=\prod_{i=0}^kp^{a_i\frac{p^i-1}{p-1}}.
\end{equation*}

We have
\begin{equation*}\label{eqsym2}\sum_{i=0}^ka_i\frac{p^i-1}{p-1}=\frac{1}{p-1}\left(\sum_{i=0}^ka_ip^i-\sum_{i=0}^ka_i\right)=\frac{1}{p-1}\left(n-\sum_{i=0}^ka_i\right)\leq \frac{n-1}{p-1}.
\end{equation*}

Finally, it is easy to verify that $x\mapsto x^{1/(x-1)}$ is a strictly decreasing function when $x>0$.
\end{proof}

An easy computation in modular arithmetic yields the following lemma:

\begin{lemma}\label{neardental3}Let $r$ be a prime, let $p$ be an odd prime and let $t$ be the smallest integer such that $p$ divides $r^t-1$. Let $p^e$ be the largest power of $p$ dividing $r^t-1$. Then $p^{e+g}$ divides $r^{f}-1$ if and only if $p^gt$ divides $f$. 
\end{lemma}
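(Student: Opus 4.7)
The statement is essentially the classical Lifting the Exponent Lemma (LTE) for odd primes, dressed up with the initial reduction by the multiplicative order $t$. I would prove it in the following three-step plan.

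First, I would handle the trivial direction: since $t$ is by definition the multiplicative order of $r$ modulo $p$, we have $p \mid r^f - 1$ if and only if $t \mid f$. Hence $p^{e+g} \mid r^f - 1$ forces $t \mid f$, so we may write $f = tm$. Setting $a = r^t$, we have $a \equiv 1 \pmod{p^e}$ and $a \not\equiv 1 \pmod{p^{e+1}}$, and the claim becomes $v_p(a^m - 1) = e + v_p(m)$, where $v_p$ denotes the $p$-adic valuation. Equivalently, the claim is that $p^{e+g} \mid a^m - 1$ iff $p^g \mid m$, which combined with $f = tm$ gives exactly $p^g t \mid f$.

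Second, I would establish the core LTE identity $v_p(a^p - 1) = v_p(a - 1) + 1$ for $p$ odd and $a \equiv 1 \pmod p$. Writing $a = 1 + w$ with $v_p(w) = e \geq 1$, I would expand
\[
1 + a + a^2 + \cdots + a^{p-1} = \sum_{i=0}^{p-1}(1+w)^i = p + w\binom{p}{2} + w^2 \cdot (\text{integer}) + \cdots,
\]
and observe that the second term has $v_p$ equal to $e + 1$ (using crucially that $p$ is odd so $(p-1)/2$ is an integer coprime to $p$), while subsequent terms have $v_p \geq 2e + 1 \geq e + 2$. Hence $v_p(1 + a + \cdots + a^{p-1}) = 1$, and factoring $a^p - 1 = (a-1)(1 + a + \cdots + a^{p-1})$ yields the claim. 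Iterating, $v_p(a^{p^g} - 1) = e + g$.

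Third, I would dispose of the coprime-to-$p$ part. If $\gcd(m, p) = 1$, then from $a^m - 1 = (a-1)(1 + a + \cdots + a^{m-1})$ and $a \equiv 1 \pmod p$, the sum $1 + a + \cdots + a^{m-1} \equiv m \pmod p$ is a unit mod $p$, so $v_p(a^m - 1) = v_p(a - 1) = e$. Combining with the previous step by writing $m = p^g m'$ with $\gcd(m', p) = 1$ and applying the coprime case to $b = a^{p^g}$ (which satisfies $v_p(b - 1) = e + g$) yields
\[
v_p(a^m - 1) = v_p(b^{m'} - 1) = v_p(b - 1) = e + g = e + v_p(m),
\]
as required. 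The only real subtlety is the binomial expansion in the second step, where oddness of $p$ is indispensable (the analogous statement fails for $p = 2$), but given the hypothesis this is a routine calculation; no other obstacle arises.
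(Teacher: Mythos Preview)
Your proof is correct; the paper itself gives no proof at all, stating only that the lemma follows from ``an easy computation in modular arithmetic.'' Your Lifting-the-Exponent argument is precisely the standard way to carry out that computation, and all three steps are sound (in particular you correctly use that $t\mid p-1$ so $\gcd(t,p)=1$, making $p^g\mid m$ equivalent to $p^gt\mid f$). One tiny inaccuracy: in Step~2 the bound ``subsequent terms have $v_p\geq 2e+1$'' is not quite right for the final term $w^{p-1}$, which has $v_p=(p-1)e$; but since $(p-1)e\geq 2$ this term still does not interfere with the conclusion $v_p(1+a+\cdots+a^{p-1})=1$, so the argument goes through unchanged.
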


\begin{table}[!h]
\begin{center}
\begin{tabular}{|c|c|c|}\hline
$T$& $p$ &$\ell$\\\hline
$\Alt(5)$ &$5$&$2,3,5,6,7,10,25$\\
$\Alt(6)$ &$3$&$1,2,3$\\
$\Alt(9)$ &$3$&$1$\\\hline
$\PSL_2(3^3)$&$3$ &$\ell$\\
$\PSL_2(3^4),\PSL_2(3^6),\PSL_2(3^9)$&$3$ &$1$\\
$\PSL_2(5^5)$&$5$&$1$\\
$\PSL_2(p)$&$7,11,13$ &$2$\\
$\PSL_2(p^2)$&$5,7,11,13$ &$1$\\
$\PSL_3(p)$&$3, 5, 7, 11, 13, 19$    & $1$ \\
$\PSL_4(p)$&$3, 5, 7$    & $1$ \\
$\PSU_3(p)$&$3,5,7,11,17,23,29$    & $1$ \\
$\PSU_4(3)$&$3$ & $1,2$ \\
$\PSU_4(p)$&$5,7$    & $1$ \\
$\PSp_4(3)$&$3$ &$1,2,3,4,6,9$\\  
$\PSp_4(3^2),\PSp_4(3^3)$&$3$ &$1$\\  
$\PSp_4(p)$& $5,7,11,13,17,19,23,29$&$1$\\  
$\PSp_6(p)$&$3,5,7$&$1$\\
$G_2(3)$&$3$&$1,2$\\
$G_2(9)$&$3$&$1$\\\hline
$\PSL_2(2^3)$&$3$&$\ell$\\\hline
\end{tabular}
\end{center}
\caption{Exceptional cases in Theorem~\ref{secondStrategy}}\label{tb:final}
\end{table}

\begin{theorem}\label{secondStrategy}
Let $p$ be an odd prime, let $\ell\geq 1$ and let $T$ be an non-abelian simple group such that $(T,p,\ell)$ appears in Table~$\ref{table:bt}$. Let $P$ be a Sylow $p$-subgroup of $\Aut(T)\wr\Sym(\ell)$. Then either $P$ is cyclic, or $\ell|T|^\ell>4p|P|^2\log_p|P|$, or $(T,p,\ell)$ appears in Table~$\ref{tb:final}$.
\end{theorem}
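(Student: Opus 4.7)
The plan is to proceed case-by-case through each entry of Table~\ref{table:bt}, checking for each triple $(T,p,\ell)$ whether $P$ is cyclic, whether the inequality $\ell|T|^\ell > 4p|P|^2\log_p|P|$ holds, or otherwise recording the triple in Table~\ref{tb:final}. The basic structural observation I would use is that $|P| = |P_0|^\ell \cdot |P_1|$, where $P_0$ is a Sylow $p$-subgroup of $\Aut(T)$ and $P_1$ is a Sylow $p$-subgroup of $\Sym(\ell)$; by Lemma~\ref{Sylowp} we have $|P_1| \le p^{(\ell-1)/(p-1)}$, and the order $|P_0|$ is read off from the standard description of $\Aut(T)$ (or, in small cases, from the \textsc{Atlas} or \texttt{magma}).

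First I would dispose of the finitely many concrete triples in Table~\ref{table:bt}: alternating groups, sporadic groups, and the Lie-type entries with a fixed parameter (including $\PSL_3(p)$, $\PSL_4(p)$, $\PSU_3(p)$, $\PSU_4(p)$, $\PSp_6(p)$, $G_2(3)$, $G_2(9)$, $\PSU_5(2)$ and ${}^2B_2(2^3)$). For each such triple, all three quantities $|T|$, $|P_0|$ and $|P_1|$ are explicit numbers, so the dichotomy reduces to a finite comparison. These computations produce precisely the matching rows of Table~\ref{tb:final}; the remaining concrete triples of Table~\ref{table:bt} either yield a cyclic $P$ (as happens, for instance, for $(\Alt(5),5,1)$ where $P\cong \C_5$) or satisfy the inequality.

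Next I would treat the two infinite families in defining characteristic, $T = \PSL_2(p^f)$ and $T = \PSp_4(p^f)$. Here $|P_0| = p^{f+\nu_p(f)}$ and $p^{4f+\nu_p(f)}$ respectively, so $|P|^2 = p^{2\ell(af+\nu_p(f)) + O(\ell)}$ with $a \in \{1,4\}$, while $|T|$ contains the coprime-to-$p$ factor $(p^{2f}-1)$ or $(p^{2f}-1)(p^{4f}-1)$. Taking logarithms reduces the inequality to an estimate of the form $c f\ell > 2\ell\,\nu_p(f) + O(\log(\ell f))$ for an explicit $c>0$, which fails only in a bounded range of $(f,\ell)$; I would then enumerate that range to recover exactly the relevant rows of Table~\ref{tb:final}, including the anomalous column $(\PSL_2(3^3),3,\ell)$ where $p \mid f$ inflates $\nu_p(f)$ uniformly in $\ell$. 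The same method applies to the cross-characteristic families $\PSL_2(r^f), \PSL_3(r^f), \PSU_3(r^f)$ with $r\neq p$: Lemma~\ref{neardental3} gives the $p$-part of $|T|$ as a function of $f$; a Sylow $p$-subgroup of $\PSL_2(r^f)$ is always cyclic, so for $\ell=1$ with $p\nmid 2f$ one already gets $P$ cyclic. Outside this sub-case, $|P|$ is bounded polynomially in $r^f$ while $|T|^\ell$ grows exponentially in $f\ell$, so only $T=\PSL_2(2^3)$, $p=3$ survives as an exception for every $\ell$.

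The hard part will be the bookkeeping near the boundary. The thresholds depend on both the $p$-adic valuation $\nu_p(f)$, which can make $|P_0|$ unexpectedly large precisely when $p \mid f$, and on the base-$p$ digit-sum implicit in Lemma~\ref{Sylowp}, which controls $|P_1|$ and in particular governs the non-monotone pattern of exceptions such as $(\Alt(5),5,\ell)$ with $\ell \in \{2,3,5,6,7,10,25\}$. Ensuring that every triple in Table~\ref{tb:final} emerges from the analysis, and that no triple outside that table slips through, will require a careful enumeration of the small-$(f,\ell)$ corner of each family, completed where necessary by computer algebra.
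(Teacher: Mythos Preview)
Your proposal is correct and follows essentially the same route as the paper: factor $|P|=|Q|^\ell\cdot|P_1|$ with $Q$ a Sylow $p$-subgroup of $\Aut(T)$ and $|P_1|\le p^{(\ell-1)/(p-1)}$ from Lemma~\ref{Sylowp}, then run through Table~\ref{table:bt} case-by-case, treating the finite concrete entries by direct computation and the infinite families by asymptotic comparison, with special care for the cross-characteristic rank-$1$ and rank-$2$ linear and unitary groups. The paper adds one small device you do not mention: using the monotonicity of $p\mapsto p^{1/(p-1)}$ it replaces the $p$-dependent bound by the single sufficient inequality $\ell|T|^\ell>4p|Q|^{2\ell}3^{\ell-1}\log_3(|Q|^\ell 3^{(\ell-1)/2})$, which streamlines the bookkeeping since the right-hand side no longer depends on $p$ except through the leading factor; and for $T=\PSL_2(r^f)$ with $r\neq p$ it organises the case-split around the multiplicative order $t$ of $r$ modulo $p$ (distinguishing $t\in\{f,2f\}$, where $Q$ is cyclic and $\ell\ge 2$ is forced, from $t\le 2f/3$, where Lemma~\ref{neardental3} gives $|Q|\le (r^{2f/3}-1)f^2$), which is exactly the mechanism underlying your observation that the cyclic-Sylow case handles $\ell=1$ with $p\nmid f$.
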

\begin{proof}
We assume that $P$ is not cyclic. Let $Q$ be a Sylow $p$-subgroup of $\Aut(T)$. From Lemma~\ref{Sylowp}, we have $|P|\leq p^{(\ell-1)/(p-1)}|Q|^\ell$ and, since $p^{1/(p-1)}$ is a decreasing function of $p$, satisfying the inequality 
\begin{equation}\label{newneweq}
\ell|T|^\ell>4p|Q|^{2\ell}3^{\ell-1}\log_3(|Q|^\ell3^{(\ell-1)/2})
\end{equation} 
is sufficient to satisfy the inequality $\ell|T|^\ell>4p|P|^2\log_p|P|$. The result then follows by considering the triples $(T,p,\ell)$ that appear in Table~\ref{table:bt} one-by-one. This is tedious but not very difficult: most of them can be eliminated using~\eqref{newneweq}. (Knowledge of $|\Aut(T)|$ is also required, see~\cite{ATLAS} for example).  The only cases that require more care are when $T$ is isomorphic to one of $\PSL_2(r^f)$, $\PSL_3(r^f)$ or $\PSU_3(r^f)$ and $r\neq p$. We discuss these in more detail.

Let $T_0$ be the group of inner-diagonal automorphisms of $T$. First suppose that $p$ is coprime to $|T_0|$. Then $f\geq|Q|$, $f\geq p$ and thus
$$4f^{2\ell+1}3^{\ell-1}\log_3(f^{\ell}3^{(\ell-1)/2})\geq 4p|Q|^{2\ell}3^{\ell-1}\log_3(|Q|^\ell3^{(\ell-1)/2}).$$
With some computation, one can see that $\ell|T|^\ell >4f^{2\ell+1}3^{\ell-1}\log_3(f^{\ell}3^{(\ell-1)/2})$ and thus \eqref{newneweq} is satisfied. 

We now assume that $p$ divides $|T_0|$. Suppose that $T=\PSL_2(r^f)$, let $t$ be the smallest integer with $p\mid r^t-1$ and let $p^e$ be the largest power of $p$ dividing $r^t-1$. Since $p$ divides $|T_0|=r^f(r^{2f}-1)$ and $p\neq r$, we get $p\mid (r^{2f}-1)$ and hence $t\mid {2f}$. Similarly, since $p\mid r^{p-1}-1$, we have $t\mid p-1$ and hence $t$ is coprime to $p$. 

We first consider the case that $t\notin\{f,2f\}$. Since $t$ is a divisor of $2f$, we must have $t\leq 2f/3$. Write $2f=tp^gk$, with $k$ coprime to $p$. Observe that $p^g$ is the largest power of $p$ dividing $f$. By Lemma~\ref{neardental3}, $p^{e+g}$ is the largest power of $p$ dividing $r^{2f}-1$. Since $|\Aut(T)|=f\cdot r^f(r^{2f}-1)$, it follows that $|Q|=p^{e+2g}$ and hence 
\begin{equation}\label{EQQ1}
|Q|\leq p^ef^2\leq(r^t-1)f^2\leq (r^{2f/3}-1)f^2.
\end{equation} 
Note that, for $r$ odd, we have $p\leq (r^t-1)/2\leq (r^{2f/3}-1)/2$. Using this bound when $r$ is odd and \eqref{EQQ1}, a computation shows that \eqref{newneweq} holds, except when $r=2$ and $f\leq 24$, $r=3$ and $f\leq 9$, or $r=5$ and $f\leq 3$. These values of $r$ and $f$ can then be checked one-by-one.

Finally, suppose that $t\in \{f,2f\}$, that is, $p$ is a primitive prime divisor of $r^f-1$ or of $r^{2f}-1$. Since $t$ is coprime to $p$, so is $f$ and hence $Q$ is a subgroup of $T_0$. As $|T_0:T|=2$ and $p$ is odd, we get $Q\leq T$. Now it follows from the subgroup structure of $\PSL_2(r^f)$ (see~\cite[Theorem~$6.25$]{Suzuki} for example) that $Q$ is cyclic and $|Q|$ divides $q-1$ when $t=f$ and divides $q+1$ when $t=2f$. Since $P$ is not cyclic, we have $\ell\geq 2$. Moreover, $|Q|=(q\pm 1)/c$ for some $c\geq 1$.   Using this explicit value of $|Q|$, it is then a simple case-by-case analysis to check that the result holds.

The cases when $T$ is one of $\PSL_3(r^f)$ or  $\PSU_3(r^f)$ can be handled similarly.
\end{proof}

We are now ready to prove the main theorem of this section.

\begin{proof}[Proof of Theorem~$\ref{theo:semisimple}$]
Since $N$ is a minimal normal subgroup of $G$, we can write $N=T^\ell$ for some non-abelian simple group $T$ and some $\ell\geq 1$. Moreover, $G$ is a subgroup of the wreath product $W=\Aut(T)\wr\Sym(\ell)$ that acts transitively on the $\ell$ simple direct factors of $N$ and hence $\ell |T|^\ell\leq |G|$.  

\medskip
\textbf{The case when $p$ is odd.}
Assume that $|\V(\Gamma)|\leq 2p|G_v^*|\log_p|G_v^*|$. Observe that $|\V(\Gamma)|=|G:G_v|=|G|/(\chi|G_v^*|)$ and hence 
\begin{equation}\label{QE0}
|G|\leq 2p\chi |G_v^*|^2\log_p|G_v^*|.
\end{equation}
By Lemma~\ref{lemma:ea}~(\ref{bonbon}), we have $ |G_v^*|^{2/3}\leq p^{r_p(G_v^*)}$. Since $r_p(G_v^*)\leq r_p(W)$ and $\ell |T|^\ell\leq |G|$, it follows 
\begin{align*}
\ell|T|^\ell\leq |G|&\leq  2p\chi|G_v^*|^2\log_2|G_v^*|\leq 2p\chi p^{3r_p(G_v^*)}\left(\frac{3}{2}r_p(G_v^*)\right)\\
&\leq 3\chi p^{3r_p(W)+1}r_p(W)\leq 6 p^{3r_p(W)+1}r_p(W).
\end{align*}
In particular, by Theorem~\ref{thrm:ss1}, $(T,p,\ell)$ appears in Table~\ref{table:bt}. Let $P$ be a Sylow $p$-subgroup of $W$ containing $G_v^*$. Since $(G_v^*)^{\Gamma(v)}$ is an elementary abelian $p$-group of order $p^2$, the group $G_v^*$ is not cyclic and hence neither is $P$. Moreover
\begin{equation*}
\ell |T|^\ell\leq |G|\leq 2p\chi |G_v^*|^2\log_p|G_v^*|\leq 4p|P|^2\log_p|P|.
\end{equation*}
In particular, by Theorem~\ref{secondStrategy}, $(T,p,\ell)$ appears in Table~\ref{tb:final}. Recall that from  Theorem~\ref{lemma:ea}~(\ref{bonbonbon}) the group $G_v^*$ has nilpotency class at most $3$.

We first deal with the only two infinite families in this table. Suppose that $T$ is  isomorphic to either $\PSL_2(2^3)$ or $\PSL_2(3^3)$ and thus $p=3$. We will show that $\ell\leq 2$ when $T=\PSL_2(3^3)$ and $\ell\leq 4$ when $T=\PSL_2(2^3)$.

Let $B=\Aut(T)^\ell$ and let $a,b,c$ be natural numbers such that $|G_v^*:G_v^*\cap B|=3^a$, $|G_v^*\cap B:G_v^*\cap N|=3^b$ and $|G_v^*\cap N|=3^c$. Moreover let $\ell'$ be the largest divisor of $\ell$ coprime to $3$. Observe that $|G_v^*|=3^{a+b+c}$, $|G:G\cap B|\geq \ell'3^a$, $|G\cap B: N|\geq 3^b$ and thus $\ell' 3^{a+b}|T|^\ell\leq |G:G\cap B||G\cap B:N||N|=|G|$. Combining this with~\eqref{QE0} we obtain 
\begin{equation}\label{EX1}
\ell'|T|^\ell\leq 12\cdot 3^{a+b+2c}(a+b+c).
\end{equation}
Note that $a\leq (\ell-1)/2$ by Lemma~\ref{Sylowp}. 

We first consider the case when $T=\PSL_2(3^3)$.  Thus $c\leq 3\ell$ and, as $B/N\cong\Out(T)^\ell$ is isomorphic to $\C_6^\ell$, we have $b\leq \ell$. Therefore \eqref{EX1} implies 
$$\ell'|T|^\ell\leq 12\cdot 3^{7\ell+(\ell-1)/2}(4\ell+(\ell-1)/2).$$
This yields $\ell\leq 3$. If $\ell=3$ then~\eqref{EX1} holds only if $a+b+2c\geq 21$, from which it follows that  $|P:G_v^*|\leq 3$. It can be checked with~\texttt{magma} that the only subgroup of $P$ having nilpotency class at most $3$ and index at most $3$ is $P\cap B$. Therefore $G_v^*=P\cap B$, $(a,b,c)=(0,3,9)$, $|G_v^*|=3^{12}$ and $|G|= |G:B\cap G||B\cap G|\geq \ell|B\cap G|\geq \ell |G_v^{*}N|=3\cdot 3^{3}|T|^3$. Now, it is easily seen that~\eqref{QE0} is not satisfied. Thus $\ell\leq 2$.

We now consider the case when $T=\PSL_2(2^3)$. Let $t$ be the highest power of $3$ dividing $(\ell!)$. Then $a\leq t$, $b\leq\ell$ and $c\leq 2\ell$. Therefore \eqref{EX1} implies 
$$\ell'|T|^\ell\leq 12\cdot 3^{5\ell+t}(3\ell+t).$$ 
This yields $\ell\leq 12$ or $\ell\in \{15,18,27\}$. (To see this, use the bound $t\leq (\ell-1)/2$ when $\ell\geq 27$ and use the explicit value of $t$ when $\ell\leq 27$.)

If $\ell\in \{5,7,8,10,11,15\}$ then a direct computation shows that, in each case, ~\eqref{EX1} gives $a+b+2c=5\ell+t$ and hence $P=G_v^*$. Since a Sylow $3$-subgroup of $\PGammaL_2(2^3)\wr\Sym(3)$ has nilpotency class $6$, $G_v^*$ has nilpotency class at least $6$, which is a contradiction. 

If $\ell\in \{12,18,27\}$ then~\eqref{EX1} yields $a+b+2c\geq 5\ell+t-1$ and hence $|P:G_v^*|\leq 3$. Let $R$ be a Sylow $3$-subgroup of $\PGammaL_2(2^3)\wr\Sym(3)$. A computation with \texttt{magma} gives that the only subgroup of $R$ having nilpotency class at most $3$ and index at most $3$ is $R\cap \PGammaL_2(2^3)^3$. As $\ell\geq 3$, it easily follows that $|P:G_v^*|=3$ and $G_v^*=P\cap B$. We have $3=|P:G_v^*|=|P:P\cap B|=3^t\geq 3^5$, a contradiction. 

If $\ell=6$ then~\eqref{EX1} implies $a+b+2c\geq 5\ell+t-2$. This yields  $|P:G_v^*|\leq 9$. With \texttt{magma} we obtain that the only subgroup of $P$  having nilpotency class at most $3$ and index at most $9$ is $P\cap B$. Thus $G_v^*=P\cap B$, $|G_v^*|=3^{18}$ and $G_v^*N=B$. Therefore  $|G|= |G:B||B|\geq \ell|B|=6(3|T|)^6$. Now, it is easily seen that~\eqref{QE0} is not satisfied.

If $\ell=9$ then~\eqref{EX1} implies $a+b+2c\geq 5\ell+t-3$. Thus $|P:G_v^*|\leq 27$. Assume that $G_v^*$ acts intransitively on the nine simple direct factors of $N$. Then $G_v^*\leq R\times R\times R$, where $R$ is a Sylow $3$-subgroup of $\PGammaL_2(2^3)\wr\Sym(3)$. Since $R$ has nilpotency class $6$, the group $G_v^*$ projects to a proper subgroup of $R$ in each of the three coordinates. Therefore $|G_v^*|\leq (|R|/3)^3=3^{27}$, contradicting the fact that $|P:G_v^*|\leq 27$. Thus $G_v^*$ acts transitively on the nine simple direct factors of $N$. Now we compute all the subgroups of $Q$ of $P$ with $|P:Q|\leq 27$ and with $Q$ projecting to a transitive subgroup of $\Sym(9)$ (observe that, computationally, the second requirement gives a strong constraint), and we check that they all have nilpotency class greater than $9$.

 We have shown that, if $T=\PSL_2(3^3)$ then $\ell\leq 2$ and if $T=\PSL_2(2^3)$ then $\ell \leq 4$. In particular, there are only a finite number of small cases in Table~\ref{tb:final} left to consider. We use \texttt{magma} to deal with them in the following way: for every triple $(T,p,\ell)$, we first determine all subgroups $G$ of $\Aut(T)\wr\Sym(\ell)$ projecting to a transitive subgroup of $\Sym(\ell)$. Second, for each such $G$, we determine the $p$-subgroups $Q$ of $G$ with  $|G|\leq 2p\chi |Q|^2\log_p|Q|$ (in most cases, there is no such $Q$ or $Q$ is a Sylow $p$-subgroup of $G$). Then, for each such $Q$, we check whether $|\Zent Q|^3\geq |Q|$, $Q$ has nilpotency class at most $3$ and exponent at most $p^2$ (see Lemma~\ref{lemma:ea}). For the groups $Q$ satisfying these criteria, we determine all the subgroups $H$ of $G$ with $Q\leq H\leq G$ and $|H:Q|=\chi$ and we compute the permutation representation $\hat{G}$ of $G$ on the right cosets of $H$. Finally, we check whether $\hat{G}$ acts on a connected graph of valency $2p$ by studying the suborbits of $\hat{G}$ of size $p$ and $2p$. The cases which do occur are listed in Table~\ref{mutherfucker}.

The procedure just described generally works well, but a few cases are more challenging computationally. These can be handled with a few tricks. For example, we show how to deal with the cases when $N=T^\ell$ is one of  $G_2(3)^2$ or $G_2(9)$. 

Suppose that $N=T=G_2(9)$.  A computation shows that~\eqref{QE0} is satisfied only when $G=T$, $G_v^*$ is a Sylow $3$-subgroup of $G$ and $\chi=2$. As above, let $Q$ be a  a Sylow $3$-subgroup of $T$. This is our candidate for $G_v^*$. For every maximal subgroup $K$ of $Q$ ($K$ is our candidate for $G_{vw}$ for $w\in \Gamma(v)$), we find  that $\norm G K\leq \norm G Q$ and hence $\norm G Q=\langle \norm G Q,\norm G K\rangle$, contradicting the fact that $\Gamma$ is connected and $G$-arc-transitive.

Suppose now that $N=G_2(3)^2$. A computation shows that~\eqref{QE0} is satisfied only when $|G:N|=2$ $G_v^*$ is a Sylow $3$-subgroup of $G$ and $\chi=2$. As $W/N\cong \D_4$, $W$ contains a unique conjugacy class of subgroups $G$ with $|G:N|=2$ and with $G$ acting transitively on $\{T_1,T_2\}$. Thus $G=(T\times T)\rtimes\langle \iota\rangle$, where $\iota$ is the involutory automorphism of $T\times T$ defined by $(x,y)^\iota=(y,x)$. As above, let $Q$ be a Sylow $3$-subgroup of $G$ and, for every maximal subgroup $K$ of $Q$, we find that $\langle \norm G Q,\norm G K\rangle<G$, contradicting the fact that $\Gamma$ is connected and $G$-arc-transitive.

\medskip
\textbf{The case when $p=2$.} If $\chi=2$, then the proof follows immediately from~\cite[Theorem~$1.2$ and Table~$2$]{PSV4valent}. The relevant examples are reported in Table~\ref{mutherfucker}. 

We may thus assume that $\chi=1$. In particular $G_v^*=G_v$ is a $2$-group.   Assume that $|\V(\Gamma)|\leq 4|G_v|\log_2|G_v|$. Let $S$ be a Sylow $2$-subgroup of $G$ with $G_v\leq S$. Write $\ell=2^{\ell_e}\ell_o$ with $\ell_o$ odd, and $|T|=2^to$ with $o$ odd.  Now, $|\V(\Gamma)|=|G:G_v|\geq |G:S|\geq \ell_o o^{\ell}$. Thus $\ell_oo^\ell \leq 4|G_v|\log_2|G_v|$. From Lemma~\ref{lemma:ea}~(\ref{bonbon}) and Lemma~\ref{neandertal1}, we have $|G_v|\leq (2^{r_2(G)})^{3/2}\leq 2^{3r_2(W)/2}=2^{3\ell r_2(\Aut(T))/2}$. Therefore
$$\ell_o o^\ell\leq 4|G_v|\log_2|G_v|\leq 6\ell r_2(\Aut(T))2^{3\ell r_2(\Aut(T))/2}.$$
By~\cite[Theorem~$4.4$]{PSV4valent}, one of the following holds:
\begin{itemize}
\item $\ell\in \{1,2,3,4\}$ and $T=\Alt(5)$ or $\Alt(6)$;
\item $\ell\in \{1,2,4\}$ and $T=\PSL_2(8)$ or $\PSL_3(2)$;
\item $\ell\in \{1,2\}$ and $T=\PSL_2(2^f)$ (with $f\in \{4,5,6\}$), $\Alt(8)$, $\PSL_3(4)$, $\Sp_4(4)$, $\PSp_4(3)$ or $\PSp_6(2)$;
\item $\ell=1$ and $T=\PSL_2(2^f)$ (with $f\in \{7,\ldots,12\}$), $M_{12}$, $M_{22}$, $\Alt(7)$, $\PSL_2(11)$, $\PSL_2(13)$, $\PSL_2(25)$, $\PSL_5(2)$, $\PSL_6(2)$, $\PSL_3(3)$, $\Sp_4(8)$, $\Sp_4(16)$, $\Sp_4(32)$, $\Sp_6(4)$, $\Sp_8(2)$, $\Omega_8^+(2)$, $\PSU_3(3)$, $\PSU_6(2)$ or $\Omega_8^-(2)$.
\end{itemize}
In particular, we have only a finite number of relatively small groups to consider. We deal with these case-by-case with the help of \texttt{magma} in a very much similar way as for $p>2$. 

For each pair $(T,\ell)$, we first determine all subgroups $G$ of $\Aut(T)\wr\Sym(\ell)$ projecting to a transitive subgroup of $\Sym(\ell)$. Second, for each such $G$, we determine the $2$-subgroups $Q$ of $G$ with  $|G|\leq 4|Q|^2\log_2|Q|$ (in most cases, there is no such $Q$ or $Q$ is a Sylow $2$-subgroup of $G$). Then, for each such $Q$, we check whether $|\Zent Q|^3\geq |Q|$, $Q$ has nilpotency class at most $3$ and exponent at most $4$ (see Lemma~\ref{lemma:ea}). For the groups $Q$ satisfying these criteria, we compute the permutation representation $\hat{G}$ of $G$ on the right cosets of $Q$. Finally, we check whether $\hat{G}$ acts on a connected asymmetric arc-transitive digraph of out-valency $2$ by studying the self-paired suborbits of $\hat{G}$ of size $2$. The cases which do occur are listed in Table~\ref{mutherfucker}.

The procedure just described generally works well, but a few cases are more challenging computationally. These can be handled with a few tricks. For example, we show how to deal with the cases when $T$ is isomorphic to  $\Sp_4(16)$ or $\Sp_6(4)$. Observe that, in these cases, $\ell=1$. When $T=\Sp_6(4)$, it is easily checked that $T\leq G\leq \Aut(T)$ and $|G|\leq 4|Q|^2\log_2|Q|$ imply that $Q$ is a Sylow $2$-subgroup of $\Aut(T)$.  However a Sylow $2$-subgroup of $\Sp_6(4)$ has nilpotency class $4$ and hence $Q$ has nilpotency class at least $4$, a contradiction. 

Finally, suppose that $T=\Sp_4(16)$. Applying the same method as above yields that $|\Aut(T):G|\leq 2$ and $Q$ is a Sylow $2$-subgroup of $G$. Let $F=\langle\sigma\rangle$ be the group of field automorphisms of $T$. Clearly $|\Aut(T):(T\rtimes F)|=2$. Moreover, \texttt{magma} has the good taste to have $T\rtimes F$ in its library and hence it is a computation to see that a Sylow $2$-subgroup of $T\rtimes F$ has nilpotency class $8$. This yields that $G\neq \Aut(T)$ and $G\neq T\rtimes F$, and hence $G\cap (T\rtimes F)=T\rtimes \langle\sigma^2\rangle$. Another computation in \texttt{magma} gives that a Sylow $2$-subgroup of $T\rtimes \langle \sigma^2\rangle$ has nilpotency class $4$.
\end{proof}

\section{The case $(p,\chi)=(2,1)$.}\label{sec:(2,1)}

In this section, we obtain the complete list of pairs $(\Gamma,G)$ that are exceptional with respect to Theorem~\ref{theorem:main2} in the case $(p,\chi)=(2,1)$. These can be found in Table~\ref{tb:exceptions}.

\begin{table}[h]
\begin{center}
\begin{tabular}{|c|c|c|c|}\hline
$\Gamma$ &$|\V\Gamma|$&$|G_v|$&$G$\\\hline
$\AG(F_6)$& $18$&$4$&$\C_3^2\rtimes \D_4$\\
$\AG(\Pet)$&$30$&$4$&$\Sym(5)$\\
$\AG(\Hea)$&$42$&$8$&$\PGL_2(7)$\\
$\C^{\pm 1}(3,3,3)$&$81$&$8$&$(\C_3^3\rtimes \C_2)\rtimes \Alt(4)$\\
$\HC(\Hea)$&$84$&$8$&$\PGL_2(7)\times\C_2$\\
$\AG(\Tut)$&$90$&$8$ or $16$&$|\PGammaL_2(9):G|\leq 2$\\
$\HC(\Tut)$&$180$&$16$&$\Sym(6)\rtimes\C_4$\\
$\AAG(\Tut)$&$8100$&$256$&$|\PGammaL_2(9)\wr \Sym(2):G|=2$\\\hline
\end{tabular}
\caption{Exceptional pairs when $(p,\chi)=(2,1)$. \footnotesize (For the notation see~\cite[Section 2.2]{PSV4valent}.)}\label{tb:exceptions}
\end{center}
\end{table}

Throughout this section, we will work under Hypothesis~A with $(p,\chi)=(2,1)$. In particular, since $\chi=1$, we have $G_v=G_v^*$.

\begin{theorem}\label{theorem : (2,1)}
Let $(\Gamma,G)$ be a locally-$\L_{2,1}$ pair such that $\Gamma$ is connected and $G$-edge-transitive and let $v$ be a vertex of $\Gamma$. Then one of the following occurs:
\begin{enumerate}
\item $\Gamma\cong \PX(2,r,s)$ for some $r\geq 3$ and $1\leq s\leq r-2$; \label{haha}
\item $|\V(\Gamma)|\geq 4|G_v|\log_2 |G_v|$;  \label{hehe}
\item $|G_v|\leq 16$; \label{hoho}
\item $(\Gamma,G)$ appears in the last line of Table~\ref{tb:exceptions}. \label{hihi}
\end{enumerate}
\end{theorem}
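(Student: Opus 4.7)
The plan is to argue by induction on $|\V(\Gamma)|$, applying Theorem~\ref{theorem:main1} at each step. Cases~(\ref{PXU}) and~(\ref{bound}) of Theorem~\ref{theorem:main1} yield conclusions~(\ref{haha}) and~(\ref{hehe}) of the present statement directly. For case~(\ref{twoorbits}), Theorem~\ref{thm:mainbasic} forces $|G_v|=\chi p^2=4\leq 16$, giving~(\ref{hoho}). For case~(\ref{cycle}), the very formulation of this case rules out $\Gamma\cong\PX$, so Theorem~\ref{GarPraCycle} must yield its second conclusion: $|G_v|=2^t$ with $t\leq m$ and $|\V(\Gamma)|\geq m\cdot q^{t\ord_2(q)}\geq t\cdot 3^t$ for some odd prime $q\geq 3$ (using $\ord_2(q)=1$). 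When $t\leq 3$ this gives $|G_v|\leq 8$, so~(\ref{hoho}) holds; when $t\geq 4$, the elementary bound $(3/2)^t\geq 81/16>4$ yields $|\V(\Gamma)|\geq t\cdot 3^t>4t\cdot 2^t=4|G_v|\log_2|G_v|$, so~(\ref{hehe}) holds.

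For case~(\ref{semisimple}), I apply Theorem~\ref{theo:semisimple} and inspect the rows of Table~\ref{mutherfucker} with $(p,\chi)=(2,1)$. Either $|\V(\Gamma)|>4|G_v|\log_2|G_v|$ and~(\ref{hehe}) holds, or $(\Gamma,G)$ has $|G_v|\in\{4,8,16\}$ (so~(\ref{hoho}) holds), or $(|\V(\Gamma)|,|G_v|)=(8100,256)$. In the last possibility, checking that the group $G$ described in Table~\ref{mutherfucker} (an index-$2$ subgroup of $\PGammaL_2(9)\wr\Sym(2)$ distinct from $\PGammaL_2(9)^2$) coincides with the one recorded for $\AAG(\Tut)$ in the final line of Table~\ref{tb:exceptions} yields conclusion~(\ref{hihi}).

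The most delicate case is~(\ref{funny}): $\Gamma$ is a regular cover of $\Gamma/N$ for some non-identity normal subgroup $N$ of $G$, and $(\Gamma/N,G/N)$ satisfies one of~(\ref{twoorbits}),~(\ref{cycle}), or~(\ref{semisimple}) of Theorem~\ref{theorem:main1}. By Corollary~\ref{lemma:cover}, $(\Gamma/N,G/N)$ satisfies Hypothesis~A, and since $|\V(\Gamma/N)|<|\V(\Gamma)|$ the inductive hypothesis applies to it. Using $|G_v|=|(G/N)_{v^N}|$ from Lemma~\ref{lemma:quotients}~\eqref{a9} and $|\V(\Gamma)|\geq 2|\V(\Gamma/N)|$ (since $N$ is semiregular with $|N|\geq 2$ by Lemma~\ref{lemma:quotients}~\eqref{a7}), conclusions~(\ref{hehe}) and~(\ref{hoho}) for the quotient transfer directly to $(\Gamma,G)$, while conclusion~(\ref{hihi}) for the quotient forces $|\V(\Gamma)|\geq 2\cdot 8100=16200>8192=4|G_v|\log_2|G_v|$, so~(\ref{hehe}) holds for $(\Gamma,G)$. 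The remaining possibility is that the inductive hypothesis yields $\Gamma/N\cong\PX(2,r,s)$; this is handled by descending to a minimal normal subgroup $M\leq N$ of $G$, chosen so that either $M$ is an elementary abelian $2$-group or $\Op 2 G=1$, and applying Theorem~\ref{Theorem:WreathCover} or Theorem~\ref{Theorem:PreWreathCover} respectively, iterating along the tower $\Gamma\to\Gamma/M\to\cdots\to\Gamma/N$ when $\Gamma/M\not\cong\PX$. The main obstacle will be correctly threading this iteration: one must arrange, at each intermediate quotient in the tower, that the relevant minimal normal subgroup is either an elementary abelian $2$-group or lies in a context where $\Op 2$ of the acting group vanishes, so that the appropriate covering lemma applies at that step.
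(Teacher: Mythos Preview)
Your argument tracks the paper's proof almost exactly through cases~(\ref{PXU})--(\ref{semisimple}) of Theorem~\ref{theorem:main1}; the treatment of the cycle case and the reading of Table~\ref{mutherfucker} are fine. The divergence is in case~(\ref{funny}), and there you are creating a difficulty that does not exist.

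The point you miss is that the very formulation of Theorem~\ref{theorem:main1}~(\ref{funny}) tells you that $(\Gamma/N,G/N)$ satisfies one of cases~(\ref{twoorbits}),~(\ref{cycle}) or~(\ref{semisimple}) of Theorem~\ref{theorem:main1}. But you have \emph{already} proved, in the first half of your argument, that any pair satisfying Hypothesis~A and one of those three cases must satisfy conclusion~(\ref{hehe}),~(\ref{hoho}) or~(\ref{hihi}) of the present theorem: your arguments there used only Theorems~\ref{thm:mainbasic},~\ref{GarPraCycle},~\ref{theo:semisimple}, all of which apply under Hypothesis~A with no reference to the ambient $(\Gamma,G)$. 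Since $(\Gamma/N,G/N)$ satisfies Hypothesis~A by Corollary~\ref{lemma:cover}, those same arguments apply verbatim to the quotient. Hence $(\Gamma/N,G/N)$ already satisfies~(\ref{hehe}),~(\ref{hoho}) or~(\ref{hihi}), and the possibility $\Gamma/N\cong\PX(2,r,s)$ never has to be treated as a separate residual case. This is exactly what the paper does: it packages your treatment of cases~(\ref{twoorbits})--(\ref{semisimple}) as a standalone ``Claim'' and then invokes that claim for the quotient.

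Consequently, the entire final paragraph of your proposal --- descending to a minimal normal $M\leq N$, choosing it to be a $2$-group or arranging $\Op 2 G=1$, invoking Theorems~\ref{Theorem:PreWreathCover} and~\ref{Theorem:WreathCover}, and threading an iteration along a tower of quotients --- is unnecessary. You are, in effect, re-deriving inside this proof the reduction that was already carried out in the proof of Theorem~\ref{theorem:main1} itself. The ``main obstacle'' you flag is not an obstacle at all once you notice that the three base cases you have already handled apply to the quotient directly.
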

\begin{proof}
We argue by induction on $|\V(\Gamma)|$ and apply Theorem~\ref{theorem:main1} with $(p,\chi)=(2,1)$. If Theorem~\ref{theorem:main1}~(\ref{PXU}) or~(\ref{bound}) holds then~(\ref{haha}) or~(\ref{hehe}) holds.  We now prove the following claim from which the result will follow.

\smallskip

\noindent\textsc{Claim: }Suppose that Theorem~\ref{theorem:main1}~(\ref{twoorbits}),~(\ref{cycle}) or~(\ref{semisimple}) is satisfied. Then either~(\ref{hehe}),~(\ref{hoho}) or~(\ref{hihi}) holds.

\smallskip 

If Theorem~\ref{theorem:main1}~(\ref{twoorbits}) is satisfied then $G$ has a semiregular abelian minimal normal subgroup having at most two orbits and, by Theorem~\ref{thm:mainbasic}, it follows that $|G_v|=4$. 

If Theorem~\ref{theorem:main1}~(\ref{cycle}) is satisfied then $G$ has a semiregular abelian minimal normal subgroup $N$ such that $\Gamma/N$ is a cycle of length $m$ at least $3$ and~(\ref{haha}) does not hold. Clearly, $N$ is a $q$-group for some prime $q$. Let $|G_v|=2^t$ and thus $4|G_v|\log_2( |G_v|)=t2^{t+2}$. By Theorem~\ref{GarPraCycle}, $q$ is odd and $|\V(\Gamma)|\geq tq^{t}$. If $tq^{t}\geq t2^{t+2}$ then~(\ref{hehe}) holds. We may thus assume that $tq^{t}< t2^{t+2}$ which easily implies that $t\leq 3$ and hence~(\ref{hoho}) holds.

Finally, if Theorem~\ref{theorem:main1}~(\ref{semisimple}) is satisfied then $G$ has a unique minimal normal subgroup and this subgroup is non-abelian and the claim follows  from Theorem~\ref{theo:semisimple}.~$_\blacksquare$ 

\smallskip

Finally we assume that Theorem~\ref{theorem:main1}~(\ref{funny}) holds. Thus $G$ has a non-identity normal subgroup $N$ such that $\Gamma$ is a regular cover of $\Gamma/N$ and $(\Gamma/N,G/N)$ satisfies  Theorem~\ref{theorem:main1}~(\ref{twoorbits}), (\ref{cycle}), or (\ref{semisimple}), with $(\Gamma,G)$ replaced by $(\Gamma/N,G/N)$.  Since $N$ is nontrivial, by induction we have that $(\Gamma/N,G/N)$ satisfies the conclusion of this theorem. 

If $(\Gamma/N,G/N)$ satisfies~(\ref{hehe}) or~(\ref{hoho}) then, since $\Gamma$ is a regular cover of $\Gamma/N$, so does $(\Gamma,G)$. If $(\Gamma/N,G/N)$ satisfies~(\ref{hihi}) then $|\V(\Gamma/N)|=8100$ and $|G_v|=|G_vN/N|=256$. Now $4|G_v|\log_2|G_v|=8192<2\cdot |\V(\Gamma/N)|\leq |N||\V(\Gamma/N)|$ and~(\ref{hehe}) holds for $(\Gamma,G)$. 

By our claim, these are the only possibilities.
\end{proof}

If $(\Gamma,G)$ is a pair satisfying the hypothesis of Theorem~\ref{theorem : (2,1)} but neither part~(\ref{haha}),~(\ref{hehe}) or ~(\ref{hihi}) of the conclusion then $|G_v|\leq 16$ and thus $|\V(\Gamma)|<256$. In particular, such a pair must appear in the census obtained in \cite{PSVHat}. It is then simply a matter of going through this census to obtain the following corollary to Theorem~\ref{theorem : (2,1)}.

\begin{corollary}\label{cor : (2,1)}
Let $(\Gamma,G)$ be a locally-$\L_{2,1}$ pair such that $\Gamma$ is connected and $G$-edge-transitive and let $v$ be a vertex of $\Gamma$. Then one of the following occurs:
\begin{enumerate}
\item $\Gamma\cong \PX(2,r,s)$ for some $r\geq 3$ and $1\leq s\leq r-2$;
\item $|\V(\Gamma)|\geq 4|G_v|\log_2( |G_v|)$; 
\item $(\Gamma,G)$ appears in Table~\ref{tb:exceptions}. \label{exceptions}
\end{enumerate}
\end{corollary}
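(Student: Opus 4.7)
The plan is to deduce Corollary \ref{cor : (2,1)} from Theorem \ref{theorem : (2,1)} by disposing of cases (3) and (4) of the theorem. Case (4) of the theorem is literally the last line of Table \ref{tb:exceptions} and so requires no further work. For case (3), the idea is that $|G_v| \leq 16$ together with the failure of the bound in part (2) of the corollary forces $|\V(\Gamma)|$ to be small enough that the pair $(\Gamma, G)$ must appear in an existing computer census.

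First I would suppose $(\Gamma, G)$ satisfies the hypothesis of Corollary \ref{cor : (2,1)} but neither of its conclusions (1) or (2). Applying Theorem \ref{theorem : (2,1)}, either $(\Gamma, G)$ is the single pair in the last line of Table \ref{tb:exceptions} and we are done, or $|G_v| \leq 16$. In the latter situation, since conclusion (2) of the corollary fails we obtain
\[
|\V(\Gamma)| < 4|G_v|\log_2|G_v| \leq 4 \cdot 16 \cdot \log_2 16 = 256,
\]
reducing the remaining problem to a finite enumeration.

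Next I would invoke the census of connected arc-transitive asymmetric digraphs constructed in \cite{PSVHat}. By Lemma \ref{lemma:graphtodigraph}, any locally-$\L_{2,1}$ pair $(\Gamma, G)$ with $\Gamma$ connected and $G$-edge-transitive lifts to a connected $G$-arc-transitive asymmetric orientation $\vGa$ of out-valency $2$ with $G_v^{\vGa^+(v)} \cong \C_2$, and in particular every candidate pair with $|\V(\Gamma)| < 256$ is recorded in that census (which goes well beyond 256 vertices). A direct inspection of the census then removes all pairs with $\Gamma \cong \PX(2, r, s)$ for some $r \geq 3$ and $1 \leq s \leq r-2$, as well as any pair meeting $|\V(\Gamma)| \geq 4|G_v|\log_2|G_v|$. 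What remains is exactly the first seven rows of Table \ref{tb:exceptions}, which combined with the last row (from case (4) of the theorem) gives the statement of the corollary.

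The main obstacle in this strategy is not conceptual but computational: everything hinges on the correctness and completeness of the census in \cite{PSVHat} up to order at least $256$. Once that external input is accepted, the proof of the corollary is merely a matter of reading off the exceptional list.
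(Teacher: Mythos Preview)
Your proposal is correct and follows essentially the same approach as the paper: apply Theorem~\ref{theorem : (2,1)}, note that case~(4) is already a row of Table~\ref{tb:exceptions}, and in case~(3) combine $|G_v|\leq 16$ with the failure of~(2) to get $|\V(\Gamma)|<256$, then read off the remaining exceptions from the census in~\cite{PSVHat}. The paper presents exactly this argument in the paragraph preceding the corollary.
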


\end{document}